\numberwithin{equation}{section}
\def\vint{\mathop{\mathchoice%
         {\setbox0\hbox{$\displaystyle\intop$}\kern 0.22\wd0%
          \vcenter{\hrule width 0.6\wd0}\kern -0.82\wd0}%
         {\setbox0\hbox{$\textstyle\intop$}\kern 0.2\wd0%
          \vcenter{\hrule width 0.6\wd0}\kern -0.8\wd0}%
         {\setbox0\hbox{$\scriptstyle\intop$}\kern 0.2\wd0%
          \vcenter{\hrule width 0.6\wd0}\kern -0.8\wd0}%
         {\setbox0\hbox{$\scriptscriptstyle\intop$}\kern 0.2\wd0%
          \vcenter{\hrule width 0.6\wd0}\kern -0.8\wd0}}%
         \mathopen{}\int}
\newcommand{\diam}{\text{\rm diam\,}}
\newcommand{\dist}{\text{\rm dist}}
\newtheorem{thm}{Theorem}[section]
\newtheorem{Thm}{Theorem}
\newtheorem{claim}{Claim}[section]
\newtheorem{rem}[thm]{Remark}
\newtheorem{rems}{Remarks}[section]
\newtheorem{cor}[thm]{Corollary}
\newtheorem{lem}[thm]{Lemma}
\newtheorem{prop}[thm]{Proposition}
\newtheorem{subclaim}{Subclaim}
\newtheorem{conj}[equation]{Conjecture}
\newtheorem{case}{Case}[section]
\newtheorem*{mysolution}{Solution}
\newtheorem{step}{Step}[section]
\theoremstyle{definition}
\newtheorem{defn}[thm]{Definition}
\newcounter {own}
\def\theown {\thesection       .\arabic{own}}
\newenvironment{pf}[1][]{%
 \vskip 3mm
 \noindent
 \ifthenelse{\equal{#1}{}}%
  {{\slshape Proof. }}%
  {{\slshape #1.} }%
 }%
{\qed\bigskip}
\newcounter{alphabet}
\newcommand{\IR}{{\mathbb R}}
\newcommand{\RNum}[1]{\uppercase\expandafter{\romannumeral #1\relax}}
\def\be{\begin{equation}}
\def\ee{\end{equation}}
\newcommand{\ben}{\begin{enumerate}}
\newcommand{\een}{\end{enumerate}}
\newcommand{\blem}{\begin{lem}}
\newcommand{\elem}{\end{lem}}
\newcommand{\bthm}{\begin{thm}}
\newcommand{\ethm}{\end{thm}}
\newcommand{\bcor}{\begin{cor}}
\newcommand{\ecor}{\end{cor}}
\newcommand{\beg}{\begin{examp}}
\newcommand{\eeg}{\end{examp}}
\newcommand{\begs}{\begin{examples}}
\newcommand{\eegs}{\end{examples}}
\newcommand{\bdefe}{\begin{defn}}
\newcommand{\edefe}{\end{defn}}
\newcommand{\bprob}{\begin{prob}}
\newcommand{\eprob}{\end{prob}}
\newcommand{\bques}{\begin{ques}}
\newcommand{\eques}{\end{ques}}
\newcommand{\bei}{\begin{itemize}}
\newcommand{\eei}{\end{itemize}}
\newcommand{\bcl}{\begin{claim}}
\newcommand{\ecl}{\end{claim}}
\newcommand{\bscl}{\begin{subclaim}}
\newcommand{\escl}{\end{subclaim}}
\newcommand{\bca}{\begin{case}}
\newcommand{\eca}{\end{case}}
\newcommand{\bstep}{\begin{step}}
\newcommand{\estep}{\end{step}}
\newcommand{\bsol}{\begin{mysolution}}
\newcommand{\esol}{\end{mysolution}}
\newcommand{\bcon}{\begin{conj}}
\newcommand{\econ}{\end{conj}}
\newcommand{\bcons}{\begin{conjs}}
\newcommand{\econs}{\end{conjs}}
\newcommand{\bprop}{\begin{prop}}
\newcommand{\eprop}{\end{prop}}
\newcommand{\br}{\begin{rem}}
\newcommand{\er}{\end{rem}}
\newcommand{\brs}{\begin{rems}}
\newcommand{\ers}{\end{rems}}
\newcommand{\bo}{\begin{obser}}
\newcommand{\eo}{\end{obser}}
\newcommand{\bos}{\begin{obsers}}
\newcommand{\eos}{\end{obsers}}
\newcommand{\bpf}{\begin{pf}}
\newcommand{\epf}{\end{pf}}
\newcommand{\ba}{\begin{array}}
\newcommand{\ea}{\end{array}}
\newcommand{\beq}{\begin{eqnarray}}
\newcommand{\beqq}{\begin{eqnarray*}}
\newcommand{\eeq}{\end{eqnarray}}
\newcommand{\eeqq}{\end{eqnarray*}}
\begin{document}
\title{\Large\bf Quantitative correspondence between quasi-symmetric mappings on complete metric spaces and rough quasi-isometric mappings on their hyperbolic fillings
 \footnotetext{\hspace{-0.35cm}
 $2020$ {\it Mathematics Subject Classfication}: Primary: 30L10; Secondary: 30L05, 51M10.
 \endgraf{{\it Key words and phases}: quasi-symmetric mapping, rough quasi-isometric mapping, hyperbolic filling, metric measure space }
}
}
\author{Manzi Huang, Xiantao Wang, Zhuang Wang, Zhihao Xu}
\date{ }
\maketitle

\begin{abstract}
	In this paper, we establish a quantitative correspondence between power quasi-symmetric mappings on complete metric spaces and rough quasi-isometric mappings on their hyperbolic fillings. In particular, we prove that the exponents in the power quasi-symmetric mappings coincide with the coefficients in the rough quasi-isometric mappings. This shows that the obtained correspondence is both sharp and consistent. In this way, we generalize the corresponding result by Bj\"orn, Bj\"orn, Gill, and Shanmugalingam (J. Reine Angew. Math., 2017) from the setting of rooted trees to that of hyperbolic fillings.
\end{abstract}

\section{Introduction}\label{sec-1}
A construction termed {\it hyperbolic filling} has been widely used in the study of geometric group theory and analysis on metric measure spaces. It provides a method for studying the large-scale geometry of a metric space by embedding it into a Gromov hyperbolic space. This technique is particularly useful for understanding the quasi-isometric properties of hyperbolic groups and their boundaries; see for example \cite{BHK,BSC,Elek,Esk98,Gr93,Ka-B,Kle-Le98}, and references therein.
In the context of analysis on metric measure spaces, hyperbolic fillings have played a quite useful role in understanding uniform domains \cite{BHK,BuC,ZPR} and in studying various function spaces, which include Sobolev spaces \cite{BS}, Besov spaces \cite{BP03, BBS, BBGS, BC, ST17, KW}, Triebel-Lizorkin spaces \cite{ST17, BST} etc.

Let $(Z, d_Z)$ be a complete metric space. In the following, we always assume that all metric spaces considered herein contain at least three points.
The construction of hyperbolic fillings of $Z$ has been considered in, e.g., \cite{BST, BuSc, BC, BuC, Jordi, Map, ST17, So}. If in addition $Z$ is compact, some slightly different constructions were given in, e.g., \cite{BBS, BS, BP03, Ca13, HX, Li1, Sh22}. Similar constructions termed as, e.g.,  hyperbolic cones, were also discussed in \cite{BSC, Gr87, HXu, Ib11, Ib14, TrV}. In this paper, since the compactness of $Z$ is not required, we adopt the construction given in \cite{BC,BuC}, which is inspired by a construction due to Buyalo and Schroeder \cite[Chapter 6]{BuSc}; see Section~\ref{sec-3} for details.

In 2017, Bj\"{o}rn, Bj\"{o}rn, Gill, and Shanmugalingam \cite{BBGS} investigated the  correspondence between rough quasi-isometric mappings on rooted trees and power quasi-symmetric mappings on their boundaries. Specifically, they proved that every power quasi-symmetric mapping between the boundaries of two rooted trees admits a rough quasi-isometric extension to the trees themselves. Conversely, every rough quasi-isometric mapping between two rooted trees induces  a power quasi-symmetric mapping between their boundaries. In both directions, sharp estimates for the involved parameters were established. See \cite[Theorems~8.2 and 9.9]{BBGS}. It is worth noting that every rooted tree can be viewed as a hyperbolic filling of its boundary (cf. \cite[Theorem~7.1]{BBS}). Morevover, the boundary of every rooted tree is a Cantor-type set, which is compact and uniformly perfect.

For general metric spaces and their hyperbolic fillings, the
correspondence between quasi-symmetric mappings on metric spaces and
rough quasi-isometric mappings on their hyperbolic fillings has also attracted much attention. In 2000, Bonk and Schramm  \cite[Theorem~7.4]{BSC} proved that every power quasi-symmetric mapping $f:$ $Z\rightarrow W$ of bounded metric spaces can be extended to a rough quasi-isometric mapping between their hyperbolic cones $F:$ ${\rm Con}(Z)\rightarrow {\rm Con}(W)$. In 2007, Buyalo and Schroeder  \cite[Theorem~7.2.1]{BuSc} obtained that for each quasi-symmetric mapping $f:$ $Z\rightarrow W$ on uniformly perfect and complete metric spaces, there is a rough quasi-isometric mapping on their hyperbolic fillings $F:$ $X\rightarrow X^\prime$ which induces $f$. This means that the boundary mapping $\partial_\infty F$ of $F$ coincides with $f$ on $Z$. The precise definition of the boundary mapping $\partial_\infty F:$ $\partial_G X \to \partial_G X^\prime$ will be presented in Section~\ref{sec-4}; see also \cite[Section 6]{BSC}. Note that, in \cite{BuSc}, the authors used a different name for hyperbolic fillings, that is, hyperbolic approximations.
By \cite[Theorem~11.3]{H}, we see that every quasi-symmetric mapping is power quasi-symmetric provided that the underlying space is uniformly perfect.
As a generalization of \cite[Theorem~7.2.1]{BuSc}, Jordi \cite[Theorem~1]{Jordi} and Mart\'inez-P\'erez \cite[Theorem~1.7]{Map} independently showed that every power quasi-symmetric mapping $f: Z\rightarrow W$ of complete metric spaces admits a rough quasi-isometric extension of their hyperbolic fillings $F: X\rightarrow X^\prime$ which induces $f$. On the other hand, since all hyperbolic fillings mentioned above are Gromov hyperbolic geodesic spaces, it follows from \cite[Theorem~5.2.17]{BuSc} that every rough quasi-isometric mapping between hyperbolic fillings induces a power quasi-symmetric mapping between their boundaries.

However, the sharp estimates for the involved parameters are lacking of consideration in the aforementioned results on hyperbolic fillings. Motivated by sharp estimates established by Bj\"{o}rn, Bj\"{o}rn, Gill, and Shanmugalingam \cite{BBGS} in the context of rooted trees, in this paper, we shall extend their result, i.e., Theorems~8.2 and 9.9 in \cite{BBGS}, from  rooted trees to  hyperbolic fillings of general metric spaces. In particular, we seek to clarify the sharp estimates for the main parameters associated with the corresponding mappings.

Returning to the setting of rooted trees,  it is known that a natural partial order exists on such structures.  By mapping each vertex of one tree  to a certain common ``largest ancestor" (which is unique due to the partial order) in another rooted tree, Bj\"orn, Bj\"orn, Gill, and Shanmugalingam provided a method to extend  power quasi-symmetric mappings between the boundaries of two rooted trees. Importantly, the resulting mapping preserves this partial order. By exploiting this order-preserving property, they were able to derive the precise relations between the parameters. In the same paper, applying this correspondence, they established an embedding result for Besov spaces on the boundaries of rooted regular trees \cite[Theorem~8.3]{BBGS}.

In the context of hyperbolic fillings of general metric spaces, the situation becomes more delicate. Such a well-behaved partial order no longer exists, so the notion of a ``largest ancestor" may not be unique. To overcome this obstacle, we develop an alternative extension method that uses infinite hyperbolic cones as a bridge to connect hyperbolic fillings. This approach offers a clear geometric interpretation of the correspondence of vertices in the hyperbolic filling,
thereby enabling  precise computations of the relations between the parameters.

To state our result, some preparation is needed.
For a complete metric space $(Z, d_Z)$, denote by $X_Z$ its hyperbolic filling with construction parameters $\alpha>1$ and $\tau>1$, and by $V_Z$ the vertex set of $X_Z$; see Section~\ref{sec-3} for details. According to \cite[Propositions 5.9]{BuC}, it is known that, under a certain constraint on  $\alpha$ and $\tau$, for example, $\tau>\max\left\{3, \alpha/(\alpha-1)\right\}$, $X_Z$ is Gromov hyperbolic. Moreover, it follows from \cite[Lemma 5.11 and Proposition 5.13]{BuC} that there exists a unique point $\omega\in \partial_G  X_Z$ such that the boundary $\partial_\omega X_Z=\partial_G X_Z\setminus \{\omega\}$ is canonically identified with $Z$ such that $d_Z$ performs as a  visual metric on $\partial_\omega X_Z$, where $\partial_G X_Z$ denotes the Gromov boundary of $X_Z$. Based on this fact, in the rest of this paper, we will not distinguish between $\partial_\omega X_Z$ and $Z$. Our main result reads  as follows.

\begin{thm}\label{intro-thm-emb}
	For complete metric spaces $(Z, d_Z)$ and $(W, d_W)$, suppose that $X_Z$ and $X_W$ are the hyperbolic fillings of $(Z, d_Z)$ and $(W, d_W)$ associated to parameters $\alpha$ and $\tau$, respectively, where $\alpha>1$ and $\tau>\max\left\{3, {\alpha}/({\alpha-1})\right\}$.
	Let $\omega \in \partial_G X_Z$ and $\omega^\prime \in \partial_G X_W$ be points for which $\partial_\omega X_Z$ is identified with $Z$ and $\partial_{\omega^\prime} X_W$ is identified with $W$.
	Let $\theta\geq 1$ and $\lambda\geq 1$ be constants. Then the following statements are true.
	\begin{enumerate}
		\item[$(i)$]
		Suppose that $f:$ $Z\rightarrow W$ is a $(\theta, \lambda)$-power quasi-symmetric mapping.
		Then there is a $(\theta, \Lambda)$-rough quasi-isometric extension $F:$ $X_Z\rightarrow X_W$ which induces $f$, i.e., the boundary mapping $\partial_\infty F=f$ on $Z$, where $\Lambda=\Lambda(\theta, \lambda, \alpha,\tau)$. Moreover, $F$ maps the vertex set $V_Z$ into the vertex set $V_W$.
		\item[$(ii)$]
		Suppose that  $F:$ $X_Z\rightarrow X_W$  is a $(\theta, \lambda)$-rough quasi-isometric mapping such that its boundary mapping $\partial_\infty F$  maps $\omega$ to $\omega^\prime$.
		Then $\partial_\infty F:$ $Z\rightarrow W$ is a $(\theta, \Lambda^\prime)$-power quasi-symmetric mapping, where $\Lambda^\prime=\Lambda^\prime(\theta, \lambda, \alpha, \tau)$.
	\end{enumerate}
	Here, the notation $\Lambda(\theta, \lambda, \alpha,\tau)$ $($resp. $\Lambda^\prime(\theta, \lambda, \alpha, \tau)$$)$ indicates that the constant $\Lambda$ $($resp. $\Lambda^\prime$$)$ depends only on the given parameters $\theta, \lambda, \alpha$ and $\tau$.
\end{thm}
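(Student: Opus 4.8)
The plan is to treat the two directions of the theorem through a single continuous model, the infinite hyperbolic cone, which linearizes the combinatorics of the filling and makes the role of the exponent $\theta$ transparent. Recall from the construction in Section~\ref{sec-3} that a vertex $v\in V_Z$ at level $n$ is a ball $B(x_v,\tau\alpha^{-n})$ in $Z$, and that the graph metric of $X_Z$ is, up to a bounded additive error depending only on $\alpha$ and $\tau$, comparable to the metric of the cone $\operatorname{Con}(Z)=Z\times\real$ in which the vertical coordinate $t$ records the logarithmic scale $\alpha^{-t}$, the distinguished point $\omega$ is the end $t\to-\infty$, and the Busemann function $b_\omega$ equals $-t$ up to bounded error. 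In this model the identification $\partial_\omega X_Z=Z$ is the obvious one, and the assertion from \cite{BuC} that $d_Z$ performs as a visual metric amounts to $(\xi\mid\zeta)_\omega=\log_\alpha(1/d_Z(\xi,\zeta))$ up to an additive constant controlled by $\alpha,\tau$. I would first record all of these comparisons quantitatively, since every additive error produced below will be absorbed into $\Lambda$ or $\Lambda^\prime$.

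For part $(i)$ I would define the extension first on $\operatorname{Con}(Z)$ and then discretize. Given the $(\theta,\lambda)$-power quasi-symmetric $f$, send a cone point $(\xi,t)$ to $(f(\xi),s)$, where the image height $s$ is chosen so that $\alpha^{-s}\asymp\diam_W f\bigl(B_Z(\xi,\alpha^{-t})\bigr)$. The power quasi-symmetry with exponent $\theta$ and coefficient $\lambda$ says precisely that, for fixed $\xi$, the assignment $t\mapsto s$ is coarsely affine with slope $\theta$ and additive error governed by $\log\lambda$, while for varying $\xi$ at comparable heights the horizontal distances transform by the power law $d_W\asymp\lambda\,d_Z^{\theta}$ at the relevant scale. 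Feeding these two facts into the cone metric shows that $\tilde F\colon\operatorname{Con}(Z)\to\operatorname{Con}(W)$ is $(\theta,\Lambda)$-rough quasi-isometric with $\Lambda=\Lambda(\theta,\lambda,\alpha,\tau)$; the multiplicative constant is exactly $\theta$ because a power law of exponent $\theta$ becomes, after taking $\log_\alpha$ of scales, an affine map of the vertical coordinate of slope $\theta$. Composing with the uniformly bounded inclusion $V_Z\hookrightarrow\operatorname{Con}(Z)$ and a nearest-vertex projection $\operatorname{Con}(W)\to V_W$ then yields $F\colon X_Z\to X_W$ with $F(V_Z)\subset V_W$, still $(\theta,\Lambda)$-rough quasi-isometric; and since $\tilde F$ extends $f$ continuously to the boundary and the projection moves points a bounded amount, $\partial_\infty F=f$ on $Z$.

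For part $(ii)$ I would run the same dictionary in reverse using Gromov products. A $(\theta,\lambda)$-rough quasi-isometry $F$ of the Gromov hyperbolic spaces $X_Z,X_W$ distorts Gromov products with multiplicative constant $\theta$ and an additive error depending only on $\theta,\lambda$ and the hyperbolicity constant $\delta=\delta(\alpha,\tau)$; the hypothesis $\partial_\infty F(\omega)=\omega^\prime$ lets me base these products at $\omega$ and $\omega^\prime$, so that $(\partial_\infty F\,\xi\mid\partial_\infty F\,\zeta)_{\omega^\prime}=\tfrac1\theta(\xi\mid\zeta)_\omega+O(1)$ on $Z$. Translating through the visual-metric identities $d_Z\asymp\alpha^{-(\cdot\mid\cdot)_\omega}$ and $d_W\asymp\alpha^{-(\cdot\mid\cdot)_{\omega^\prime}}$, the two-sided bound on the distortion of Gromov products exponentiates, after normalization, to $c^{-1}d_Z^{\theta}\le d_W(\partial_\infty F\,\xi,\partial_\infty F\,\zeta)\le c\,d_Z^{1/\theta}$, which is exactly the $(\theta,\Lambda^\prime)$-power quasi-symmetry inequality with $\eta(t)=\Lambda^\prime\max\{t^\theta,t^{1/\theta}\}$ and $\Lambda^\prime=\Lambda^\prime(\theta,\lambda,\alpha,\tau)$ absorbing the constant $c=\alpha^{O(1)}$.

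The main obstacle in both directions is quantitative bookkeeping rather than a conceptual gap: I must show that all additive errors --- those in the filling-versus-cone comparison, in the visual-metric estimate of \cite{BuC}, and in the hyperbolic distortion of Gromov products --- depend only on $\alpha,\tau$ (through $\delta$ and $\tau$) and on $\theta,\lambda$, and, crucially, that none of them contaminates the multiplicative constant, which must come out as exactly $\theta$. This forces me to keep the power law separated from its coefficient throughout, so that it is the exponent $\theta$ alone that governs the vertical slope in the cone and the Gromov-product factor, whereas $\lambda$ (resp.\ the additive rough quasi-isometry constant) enters only the coefficient $\Lambda$ (resp.\ $\Lambda^\prime$). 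Handling the two-sided nature of the estimates --- the $\max\{t^\theta,t^{1/\theta}\}$ form, equivalently the possibility that either $\tfrac1\theta$ or $\theta$ is the operative slope depending on which points are coarsely closer --- is the delicate case analysis that must be carried out with care.
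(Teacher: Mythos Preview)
Your plan for part~$(i)$ is essentially the paper's approach: the paper also passes through the infinite hyperbolic cone, invoking the extension $\widehat f$ of \cite{HXu} (which is built exactly from the scale function you describe), conjugating by the rough similarities $\sigma,\sigma'$ between cone and filling, and then snapping to the nearest vertex to obtain the vertex-to-vertex property.

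Part~$(ii)$, however, has a genuine gap. The step ``$(\partial_\infty F\,\xi\mid\partial_\infty F\,\zeta)_{\omega'}=\tfrac1\theta(\xi\mid\zeta)_\omega+O(1)$'' is not valid: a rough quasi-isometry controls Gromov products based at an \emph{interior} point (which are nonnegative) multiplicatively, but the Busemann-based product $(\xi\mid\zeta)_b$ equals, up to bounded error, the \emph{difference} $(\xi\mid\zeta)_o-(\xi\mid\omega)_o-(\zeta\mid\omega)_o$ and can take either sign; the three terms may be stretched by different factors in $[\theta^{-1},\theta]$, so their difference has no clean multiplicative law. Consequently your snowflake-type conclusion $c^{-1}d_Z^{\theta}\le d_W\le c\,d_Z^{1/\theta}$ neither follows nor is it equivalent to $(\theta,\Lambda')$-power quasi-symmetry, which is a statement about \emph{ratios} $d_W(f x,f z)/d_W(f y,f z)$: dividing the two snowflake bounds for different pairs produces $c^{2}\,d_Z(x,z)^{1/\theta}/d_Z(y,z)^{\theta}$, not a function of $d_Z(x,z)/d_Z(y,z)$. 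The paper circumvents this by working not with $(\xi\mid\zeta)_b$ but with the four-point cross-ratio $\langle x,y,z,\omega\rangle=(x\mid y)_o+(z\mid\omega)_o-(x\mid z)_o-(y\mid\omega)_o$, which by Lemma~\ref{lem-langle} coincides with $(x\mid y)_b-(x\mid z)_b$ up to bounded error and hence with $\log_\alpha\bigl(d_\omega(x,z)/d_\omega(x,y)\bigr)$; the key input is then \cite[Theorem~4.4.1]{BuSc} (Lemma~\ref{thm-strong} here), which says that a $(L_1,L_2,\Lambda)$-rough quasi-isometry between hyperbolic geodesic spaces is \emph{strongly} $(L_1,L_2,d)$-power quasi-isometric, i.e.\ distorts $\langle\cdot,\cdot,\cdot,\cdot\rangle$ with multiplicative factor exactly in $[L_1,L_2]$ according to its sign. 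This is the missing ingredient that makes the exponent come out as~$\theta$ and delivers the ratio bound directly.
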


\begin{rem}\rm
	\begin{enumerate}
		\item [$(1)$] Theorem~\ref{intro-thm-emb} is a direct consequence of Theorems~\ref{thm-emb}, \ref{prop-boundary}, \ref{prop-trun-X}, and \ref{thm-limit}. In fact, we prove more than Theorem~\ref{intro-thm-emb} in this paper.
		
		\item [$(2)$] Theorem~\ref{intro-thm-emb} can be viewed as a quantitative version of the correspondence between quasi-symmetric mappings on complete metric spaces and rough quasi-isometric mappings on their hyperbolic fillings. Moreover, the main parameters associated with the involved mappings are sharp and consistent. This is because the statement $(i)$ in Theorem~\ref{intro-thm-emb} shows that a power quasi-symmetric mapping can be extended to a rough quasi-isometric mapping whose coefficient equals the original exponent, and, conversely, the statement $(ii)$ in Theorem~\ref{intro-thm-emb} illustrates that a rough quasi-isometric mapping induces a power quasi-symmetric mapping whose exponent coincides with the original coefficient. This reciprocal relationship confirms that both the exponents of power quasi-symmetric mappings and the coefficients of rough quasi-isometric mappings are sharp.
	\end{enumerate}
\end{rem}

Incidentally, the precise quantitative correspondence in Theorem~\ref{intro-thm-emb} plays a vital role in our forthcoming work \cite{H-W-W-X-prep}, where it is used to obtain a sharp embedding result
induced by power quasi-symmetric mappings
 for Besov spaces on Ahlfors regular metric spaces.

The paper is organized as follows. In Section~\ref{sec-2}, some necessary terminologies are introduced. In Section~\ref{sec-3}, the concepts of Gromov hyperbolicity and Busemann function are introduced, and the constructions of hyperbolic fillings and infinite hyperbolic cones are presented. Several useful known results are recalled.
In Section~\ref{sec-4}, by using infinite hyperbolic cones as a bridge to connect the hyperbolic fillings, a different extension method is provided, see Theorem~\ref{thm-emb} and its proof. Also, it is proved that the obtained extension induces the original mapping on the boundary, see Theorem~\ref{prop-boundary}. Further, the extension obtained in Theorem~\ref{thm-emb} can be modified to satisfy the vertex-to-vertex property, see Theorem~\ref{prop-trun-X}. In fact,  Theorem~\ref{intro-thm-emb}$(i)$ is a special case of the combination of Theorems~\ref{thm-emb}, \ref{prop-boundary} and  \ref{prop-trun-X}. In Section~\ref{sec-5}, it is shown that the boundary mapping of a rough quasi-isometric mapping between hyperbolic fillings is power quasi-symmetric, see Theorem~\ref{thm-limit}. As a special case, Theorem~\ref{intro-thm-emb}$(ii)$ follows immediately.

\section{Rough quasi-isometric mappings and quasi-symmetric mappings }\label{sec-2}

Let $(X, d_X)$ be a metric space. The distance of sets $A$ and $B$ in $X$ is denoted by $\dist(A, B)$, i.e., $\dist(A, B)=\inf\{d_X(x,z):\; x\in A,\; z\in B\}$. The diameter of a set $A\subset X$ is denoted by $\diam A$, i.e., $\diam A=\sup\{d_X(x,z):\; x,\; z\in A\}$.

A set $A\subset X$ is called {\it $k$-cobounded} (in $X$) if there is a constant $k\geq 0$ such that $\dist(\{x\}, A)\leq k$ for any point $x\in X$. If $A$ is $k$-cobounded for some $k\geq0$,  we briefly say that $A$ is {\it cobounded}.

Let $f: X\rightarrow Y$ be a mapping (not necessary continuous) between metric spaces $(X, d_X)$ and $(Y, d_Y)$. Let $\alpha_1$, $\alpha_2$ and $\alpha$ be constants such that $\alpha_2\geq\alpha_1>0$ and $\alpha\geq1$. Suppose that $f(X)$ is $k$-cobounded in $Y$ for $k\geq 0$. If, in addition, for all $x, z\in X$,
$$
\alpha_1 d_X(x, z)-k\leq d_Y\left(f(x),f(z)\right)\leq \alpha_2d_X(x, z)+k,
$$
then $f$ is called an $(\alpha_1, \alpha_2, k)$-{\it rough quasi-isometric mapping}.

If
$$
\alpha^{-1}d_X(x,z)-k\leq d_Y(f(x),f(z))\leq\alpha d_X(x,z)+k,
$$
then $f$ is called an $(\alpha, k)$-{\it rough quasi-isometric mapping}, i.e., an $(\alpha^{-1}, \alpha, k)$-rough quasi-isometric mapping. For convenience, we call $\alpha$ a {\it coefficient} of $f$.

If
$$
\alpha d_X(x,z)-k\leq d_Y(f(x),f(z))\leq\alpha d_X(x,z)+k,
$$
then $f$ is called an $(\alpha, k)$-{\it rough similarity}.




Two mappings $f,g:X\rightarrow Y$ are {\it roughly equivalent}, written $f\simeq g$, if there exists a constant $C\geq 0$ such that $d_Y(f(x), g(x))\leq  C$ for every point $x\in X$.
A {\it rough inverse} of a rough quasi-isometric mapping $f:X\rightarrow Y$ is a rough quasi-isometric mapping $g:Y\rightarrow X$ such that $g\circ f\simeq \text{id}_{X}$ and $f\circ g \simeq \text{id}_{Y}$, where $\text{id}_{X}$ (resp. $\text{id}_{Y}$) denotes the identity mapping defined on $X$ (resp. $Y$).


\begin{lem}\label{lem-composition}
Let $X$, $Y$, and $Z$ be metric spaces. Suppose that $f: X\to Y$ is an $(\alpha_1, \alpha_2, k_1)$-rough quasi-isometric mapping with $\alpha_2\geq \alpha_1>0$ and $k_1\geq 0$, and $g: Y\to Z$ is an $(\alpha_3, \alpha_4, k_2)$-rough quasi-isometric mapping with $\alpha_4\geq \alpha_3>0$ and $k_2\geq 0$. Then $g\circ f: X\to Z$ is an $(\alpha_1\alpha_3, \alpha_2\alpha_4, k)$-rough quasi-isometric mapping with $k=\alpha_4(k_1+1)+2k_2+1$.
\end{lem}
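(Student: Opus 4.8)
The plan is to verify separately the two defining properties of an $(\alpha_1\alpha_3, \alpha_2\alpha_4, k)$-rough quasi-isometric mapping for $g\circ f$: the two-sided distance inequalities and the coboundedness of its image. The distance inequalities are the routine part and follow by chaining the hypotheses. First I would fix $x, z\in X$ and compose the upper bounds: from $d_Z(g(f(x)), g(f(z)))\le \alpha_4 d_Y(f(x), f(z)) + k_2$ and $d_Y(f(x), f(z))\le \alpha_2 d_X(x,z) + k_1$, multiplying the latter by $\alpha_4>0$ gives $d_Z(g(f(x)), g(f(z)))\le \alpha_2\alpha_4 d_X(x,z) + \alpha_4 k_1 + k_2$. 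Symmetrically, the lower bounds yield $d_Z(g(f(x)), g(f(z)))\ge \alpha_1\alpha_3 d_X(x,z) - \alpha_3 k_1 - k_2$. Since $\alpha_3\le \alpha_4$ and $k_2\ge 0$, both additive error terms $\alpha_4 k_1 + k_2$ and $\alpha_3 k_1 + k_2$ are dominated by $k = \alpha_4(k_1+1) + 2k_2 + 1$, so the inequalities with coefficients $\alpha_1\alpha_3$ and $\alpha_2\alpha_4$ and constant $k$ hold.

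The substantive step, and the one where the precise form of $k$ is forced, is showing that $(g\circ f)(X) = g(f(X))$ is $k$-cobounded in $Z$. Given an arbitrary $z_0\in Z$, I would first use the $k_2$-coboundedness of $g(Y)$ to pick $y_0\in Y$ with $d_Z(z_0, g(y_0))\le k_2 + 1$, and then use the $k_1$-coboundedness of $f(X)$ to pick $x_0\in X$ with $d_Y(y_0, f(x_0))\le k_1 + 1$. The extra $+1$ in each choice is needed because coboundedness is phrased through the infimum $\dist(\{\cdot\},\cdot)$, which need not be attained, so allowing a slack of $1$ guarantees that such points exist. By the triangle inequality together with the $\alpha_4$-Lipschitz upper bound for $g$,
\[
d_Z(z_0, g(f(x_0))) \le d_Z(z_0, g(y_0)) + \alpha_4 d_Y(y_0, f(x_0)) + k_2 \le (k_2+1) + \alpha_4(k_1+1) + k_2 = k,
\]
which is exactly the asserted constant. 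Hence $\dist(\{z_0\}, g(f(X)))\le k$ for every $z_0\in Z$, establishing the coboundedness.

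I do not expect any genuine obstacle, since the lemma is a bookkeeping statement; the only point requiring care is the accounting of the additive constant in the coboundedness estimate, where the two unit slacks coming from the possibly non-attained infima and the propagation of $k_1$ through the $\alpha_4$-Lipschitz bound of $g$ combine to give precisely $k = \alpha_4(k_1+1) + 2k_2 + 1$. Finally I would note that $\alpha_2\alpha_4 \ge \alpha_1\alpha_3 > 0$ (as products of the hypothesized inequalities $\alpha_2\ge \alpha_1>0$ and $\alpha_4\ge \alpha_3>0$) and $k\ge 0$, so that $g\circ f$ is indeed admissible as an $(\alpha_1\alpha_3, \alpha_2\alpha_4, k)$-rough quasi-isometric mapping in the sense defined above.
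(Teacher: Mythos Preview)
Your proof is correct and follows essentially the same approach as the paper: chain the distance inequalities to get additive error $\alpha_4 k_1 + k_2$ (which the paper calls $k_3$), then establish $k$-coboundedness by choosing $y_0$ and $x_0$ with unit slack and propagating through the $\alpha_4$-Lipschitz upper bound of $g$ to obtain exactly $k=\alpha_4(k_1+1)+2k_2+1$. The paper records $k=\max\{k_3,k_4\}=k_4$ explicitly, but otherwise your argument and theirs coincide.
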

 \begin{proof}
The assumptions of the lemma ensure that for any $x_1$, $x_2\in X$,
$$
\alpha_1d_X(x_1, x_2)-k_1\leq d_Y(f(x_1), f(x_2))\leq \alpha_2d_X(x_1, x_2)+k_1
$$
and
\begin{equation}\label{10-10-1}
\alpha_3 d_Y(f(x_1), f(x_2))-k_2\leq d_Z(g\circ f(x_1), g\circ f(x_2))\leq \alpha_4 d_Y(f(x_1), f(x_2))+k_2.
\end{equation}
Thus we have
$$
\alpha_1\alpha_3d_X(x_1, x_2)-k_3\leq d_Z(g\circ f(x_1), g\circ f(x_2))\leq \alpha_2\alpha_4d_X(x_1, x_2)+k_3,
$$ where
$k_3=k_1\max\{\alpha_3, \alpha_4\}+k_2=k_1\alpha_4+k_2$.

To show that $g\circ f(X)$ is cobounded in $Z$, let $z\in Z$. Since $g(Y)$ is $k_2$-cobounded in $Z$, there is $y\in Y$ such that
\begin{equation}\label{eq-gyz}
d_Z(g(y), z)\leq k_2+1.
\end{equation}
Also, since $f(X)$ is $k_1$-cobounded in $Y$, there is $x\in Y$ such that
\begin{equation}\label{eq-fxy}
d_Y(f(x), y)\leq k_1+1.
\end{equation}
It follows from \eqref{eq-gyz} and \eqref{eq-fxy}, together with \eqref{10-10-1}, that
\begin{align*}
d_Z(g\circ f(x), z)\leq& d_Z(g\circ f(x), g(y))+d_Z(g(y), z)\\
\leq& \alpha_4d_Y(f(x), y)+k_2+k_2+1\\
\leq& \alpha_4(k_1+1)+2k_2+1=: k_4,
\end{align*}
which shows that $g\circ f(X)$ is $k_4$-cobounded in $Z$. It is clear that $k_3\leq k_4$. Hence, by taking $k=\max\{k_3, k_4\}=k_4$, we know that $g\circ f: X\to Z$ is an $(\alpha_1\alpha_3, \alpha_2\alpha_4, k)$-rough quasi-isometric mapping, and hence, the lemma is proved.
\end{proof}

A {\it geodesic} $($resp. a {\it geodesic ray}, a {\it geodesic segment}$)$ in $X$ is an isometry $\gamma: I\rightarrow X$, where $I$ is $\mathbb{R}$ $($resp. $[0,+\infty)$, a closed segment in $\mathbb{R}$$)$. A {\it geodesic metric space} is a metric space $X$ such that for any points $x, y\in X$, there is a geodesic segment connecting $x$ and $y$. We denote any geodesic segment with endpoints $x, y$ by $[x,y]$.
If the geodesic segment connecting $x$ and $y$ is not unique, then we use $[x,y]$ to denote one of these geodesics.

\begin{defn}
	Let $(Z, d_Z)$ and $(W, d_W)$ be two metric spaces. A homeomorphism $f: Z\rightarrow W$ is {\it $\eta$-quasi-symmetric} if there exists a self-homeomorphism $\eta$ of $[0, +\infty)$ such that for all triples of points $x, y, z\in Z$,
	$$\frac{d_W(f(x), f(z))}{d_W(f(y), f(z))}\leq \eta\left(\frac{d_Z(x, z)}{d_Z(y, z)}\right).$$


	If, in addition, there are constants $\theta\geq 1$ and $\lambda\geq 1$ such that
	\begin{equation*}\label{eq-1.1}
		\eta(t)=
		\left\{\begin{array}{cl}
			\lambda t^{ 1/\theta}& \text{for} \;\; 0<t<1, \\
			\lambda t^{\theta}& \text{for} \;\; t\geq 1,
		\end{array}\right.
	\end{equation*}
	then $f$ is called a $(\theta, \lambda)$-{\it power quasi-symmetric mapping}. For convenience, we call $\theta$ an {\it exponent} of $f$.
\end{defn}


\section{Gromov hyperbolic spaces}\label{sec-3}

In this section, we give a brief introduction of Gromov hyperbolic spaces and Busemann functions, and provide the constructions of hyperbolic fillings and infinite hyperbolic cones adopted in this paper. For more details, we refer interested readers to \cite{BuC, BuSc, BSC, GH, V, ZPR}.

\subsection{Gromov hyperbolic spaces}\label{sec-3-1}
Assume that $(X,d_X)$ denotes a metric space.
Given a triple of points $x, y, o\in X$, the {\it Gromov product $(x|y)_{o}$ based at $o$} is defined as
$$
(x|y)_{o}=\frac{1}{2}\big(d_X(x,o)+d_X(y,o)-d_X(x,y)\big).
$$
Then for any $x, y, o, o^{\prime}\in X$, we have
\begin{equation}\label{eq-base-1}
\big|(x|y)_{o}-(x|y)_{o^{\prime}}\big|\leq d_X(o,o^{\prime}).
\end{equation}

Let $a\vee b$ $($resp. $a\wedge b$$)$ denote the maximum $($resp. the minimum$)$ of $a, b\in \overline{\mathbb{R}}=\mathbb{R}\cup \{\infty\}$. The space $X$ is called {\it $\delta$-hyperbolic} if
there is a constant $\delta\geq 0$ such that for all $x, y, z, o\in X$,
$$
(x|y)_{o}\geq(x|z)_{o}\wedge(z|y)_{o}-\delta.
$$
If $X$ is $\delta$-hyperbolic for some $0\leq\delta<\infty$, we sometimes briefly say that $X$ is {\it Gromov hyperbolic}.

\begin{defn}
Let $X$ be $\delta$-hyperbolic and $o\in X$. A sequence of points $\{x_{i}\}\subset X$ is said to {\it converge to infinity} if
$$
(x_{i}|x_{j})_{o}\rightarrow\infty \;\; \mbox{as}\;\; i, j\rightarrow\infty.
$$
Two sequences $\{x_{i}\}$ and $\{y_{i}\}$ that converge to infinity are said to be {\it equivalent} if
$$
(x_{i}|y_{i})_{o}\rightarrow\infty \;\; \mbox{as}\;\; i\rightarrow\infty.
$$
\end{defn}

This defines an equivalence relation for sequences in $X$ converging to infinity. The convergence of a sequence and the equivalence of two sequences do not depend on the choice of the basepoint $o$ because of \eqref{eq-base-1}.
The {\it Gromov boundary} $\partial_{G}X$ of $X$ is defined as the set of all equivalence classes of sequences converging to infinity. For a point  $\omega\in\partial_{G}X$ and a sequence $\{x_n\}$ converging to infinity, we say that {\it $\{x_n\}$ converges to $\omega$} and write $\{x_n\}\in\omega$ or $x_n\rightarrow\omega$ if $\{x_n\}$ belongs to the equivalence class of $\omega$.

Let $X$ be $\delta$-hyperbolic, and let  $y$, $o\in X$ and $\xi\in \partial_{G}X$. The Gromov product $(y|\xi)_o$ based at $o$ is defined as follows:
$$
(\xi|y)_o=(y|\xi)_o=\inf\left\{\liminf_{i\rightarrow\infty}(x_i|y)_o:\; \{x_i\}\in\xi\right\}.
$$
For $\zeta, \xi\in\partial_{G}X$, we define the Gromov  product
$$
(\zeta|\xi)_o=\inf\left\{\liminf_{i\rightarrow\infty}(x_i|y_i)_o:\; \{x_i\}\in\zeta\;\;\mbox{and}\;\; \{y_i\}\in\xi\right\}.
$$

By \cite[(3.4) in Section 2]{BSC} and \cite[Lemma 5.11]{V}, we see that there is a constant $C(\delta)>0$ such that for any $\zeta, \xi\in\partial_{G}X$, if $\{x_i\}\in \zeta$ and $\{y_i\}\in \xi$, then
\begin{equation}\label{eq-Gromov-limit}
(\zeta|\xi)_o-C(\delta)\leq \liminf_{i\rightarrow\infty}(x_i|y_i)_o\leq \limsup_{i\rightarrow\infty}(x_i|y_i)_o\leq (\zeta|\xi)_o+C(\delta).
\end{equation}

\subsection{Busemann functions}\label{sec-3-15}
Let $(X, d_X)$ be a $\delta$-hyperbolic geodesic space, and let $\gamma:[0, +\infty)\rightarrow X$ be a geodesic ray. For a point $\omega\in \partial_G X$, we say that {\it $\gamma$ belongs to $\omega$} or $\gamma\in \omega$ if $\gamma(n)\to\omega$ as $n\to +\infty$.
The {\it Busemann function} $b_\gamma:X\rightarrow\mathbb{R}$ associated to $\gamma$ is defined by
\begin{equation}
b_{\gamma}(x)=\lim_{t\rightarrow+\infty} \big(d_X(\gamma(t),x)-t\big).
\end{equation}

We define the set of all Busemann functions on $X$ as
$$
\mathcal{B}(X)=\{b_{\gamma}+s:\text{$\gamma$ is a geodesic ray in $X$}\;\;\mbox{and}\;\; s\in \mathbb{R}\}.
$$
 For such $b=b_{\gamma}+s\in \mathcal{B}(X)$, we say that $\omega\in\partial_G X$ is the {\it basepoint} of $b$ if $\gamma$ belongs to $\omega$.
By \cite[Lemma 2.5]{BuC}, for any geodesic rays $\gamma$ and $\gamma^\prime$ that both belong to $\omega$, there exists an $s\in\mathbb R$ depending only on $\gamma(0)$ and $\gamma^\prime(0)$ such that
\begin{equation}\label{eq-base-2}
|b_\gamma-b_{\gamma^\prime}-s|\leq C(\delta).
\end{equation}
Moreover, $s=0$ if $\gamma(0)=\gamma^\prime(0)$.

Fix $b\in \mathcal{B}(X)$ with the basepoint $\omega\in\partial_G X$. For any $x, y\in X$, the {\it Gromov product $(x|y)_b$ based at $b$} is defined by
$$
(x|y)_b=\frac{1}{2}(b(x)+b(y)-d_X(x,y)).
$$

A sequence $\{x_n\}$ {\it converges to infinity with respect to $\omega$} if
$$
(x_m|x_n)_b\rightarrow \infty \;\; \mbox{as}\;\; m, n\rightarrow \infty,
$$
and two sequences $\{x_n\}$ and $\{y_n\}$ are {\it equivalent with respect to $\omega$} if
$$
(x_n|y_n)_b\rightarrow \infty \;\; \mbox{as}\;\;n\rightarrow \infty.
$$

By \cite[Lemma 2.5]{BuC}, for a fixed basepoint $\omega\in \partial_G X$, these definitions do not depend on the choice of $b\in \mathcal{B}(X)$ with this basepoint.
The {\it Gromov boundary relative to $\omega$}, denoted by $\partial_{\omega}X$, is the set of all equivalence classes of sequences converging to infinity with respect to $\omega$.
 For $\zeta\in\partial_{\omega}X$ and a sequence $\{x_n\}$ that converges to infinity with respect to $\omega$, we say that {\it $\{x_n\}\in\zeta$ with respect to $\omega$}, if $\{x_n\}$ belongs to the equivalence class of $\zeta$.

The following result  is derived  from \cite[Lemma 2.4]{BuC} and \cite[Example 3.2.1]{BuSc}.


\begin{lem}\label{eq-bbbb}
Let $\omega\in \partial_G X$ and $o\in X$. Let $\gamma$ be a geodesic ray from $o$ to $\omega$ with $\gamma(0)=o$, and let $b_\gamma$ be the Busemann function associated to $\gamma$. Then there is a constant $\nu=\nu(\delta)$ such that for any $x, y\in X$,
$$
|(x|y)_{b_\gamma}-\big((x|y)_o-(x|\omega)_o-(y|\omega)_o\big)|\leq \nu.$$
\end{lem}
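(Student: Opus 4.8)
The plan is to reduce the statement to one exact algebraic identity expressing $(x|y)_{b_\gamma}$ in terms of the ordinary Gromov products based at $o$, and then to control the relevant limits by the boundary products $(x|\omega)_o$ and $(y|\omega)_o$ up to an additive constant depending only on $\delta$.

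First I would exploit that $\gamma$ is a geodesic ray with $\gamma(0)=o$, so $d_X(\gamma(t),o)=t$ for all $t\geq 0$. Substituting this into the definition of the Gromov product based at $o$ gives, for every $x\in X$,
$$
d_X(\gamma(t),x)-t = d_X(x,o) - 2(x|\gamma(t))_o .
$$
The function $t\mapsto d_X(\gamma(t),x)-t$ is nonincreasing (triangle inequality along $\gamma$) and bounded below by $-d_X(x,o)$, which is exactly the standard argument securing the existence of $b_\gamma(x)$. Letting $t\to+\infty$ therefore shows that $\ell(x):=\lim_{t\to+\infty}(x|\gamma(t))_o$ exists and
$$
b_\gamma(x) = d_X(x,o) - 2\,\ell(x), \qquad \text{i.e.}\qquad \ell(x)=\tfrac12\big(d_X(x,o)-b_\gamma(x)\big).
$$

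Next I would insert these expressions into $(x|y)_{b_\gamma}=\tfrac12\big(b_\gamma(x)+b_\gamma(y)-d_X(x,y)\big)$. The terms $d_X(x,o)$ and $d_X(y,o)$ recombine into $(x|y)_o$, and the identity collapses to the exact relation
$$
(x|y)_{b_\gamma} = (x|y)_o - \ell(x) - \ell(y).
$$
At this stage the lemma is purely a matter of comparing $\ell(x)$ with $(x|\omega)_o$, and $\ell(y)$ with $(y|\omega)_o$.

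The hard part will be this last comparison, namely $|\ell(x)-(x|\omega)_o|\leq C(\delta)$. Since any sequence $\{\gamma(t_i)\}$ with $t_i\to+\infty$ lies in the class of $\omega$, the definition of $(x|\omega)_o$ as an infimum of liminf's gives $(x|\omega)_o\leq \ell(x)$ for free; the reverse inequality up to an additive $\delta$-constant is the standard hyperbolic fact that any two sequences representing $\omega$ yield comparable liminf's of Gromov products against a fixed interior point, which is the interior-point analogue of \eqref{eq-Gromov-limit} and is precisely the content of \cite[Lemma 2.4]{BuC} and \cite[Example 3.2.1]{BuSc}. Combining $|\ell(x)-(x|\omega)_o|\leq C(\delta)$ and $|\ell(y)-(y|\omega)_o|\leq C(\delta)$ with the exact identity above yields
$$
\big|(x|y)_{b_\gamma} - \big((x|y)_o-(x|\omega)_o-(y|\omega)_o\big)\big|\leq 2C(\delta),
$$
so the lemma holds with $\nu=2C(\delta)$. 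The only subtlety to verify carefully is that every error term is a uniform function of $\delta$ alone and does not silently accumulate dependence on $x$, $y$, or $o$; this is transparent in the estimate, since each step contributes only a fixed $\delta$-constant.
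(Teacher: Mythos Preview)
Your proof is correct and follows essentially the same route as the paper: both reduce to the exact identity $(x|y)_{b_\gamma}=(x|y)_o-\ell(x)-\ell(y)$ (the paper writes this via $\beta_{\omega,o}(x)=d_X(o,x)-2(\omega|x)_o$ and \cite[Example~3.2.1]{BuSc}) and then invoke \cite[Lemma~2.4]{BuC} to compare $\ell(x)$ with $(x|\omega)_o$ up to a $\delta$-constant, arriving at the same $\nu=2C(\delta)$. The only cosmetic difference is that you unpack the limit $\ell(x)$ explicitly, whereas the paper quotes the two cited results as black boxes.
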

\begin{proof}
By \cite[Lemma 2.4]{BuC}, we know that
\begin{equation*}
|b_\gamma(x) - \beta_{\omega, o}(x) |\leq C(\delta),
\end{equation*}
where $\beta_{\omega, o}(x)=d_X(o,x)-2(\omega | x)_o$.
Moreover, by \cite[Example 3.2.1]{BuSc}, we have
\begin{equation*}
(x|y)_o-(x|\omega)_o-(y|\omega)_o=\frac{1}{2}\left(\beta_{\omega, o}(x)+\beta_{\omega, o}(y)-d_X(x, y)\right),
\end{equation*}
which shows that
$$
|(x|y)_{b_\gamma}-\big((x|y)_o-(x|\omega)_o-(y|\omega)_o\big)|\leq \nu
$$
with $\nu=2C(\delta)$. The proof of this lemma is complete.
\end{proof}

The following result establishes an identification between $\partial_G X\setminus\{\omega\}$ and $\partial_{\omega}X$.

\begin{Thm}[{\cite[Proposition 3.4.1]{BuSc}}]\label{prop-Gromov}
Let $\omega\in\partial_G X$. A sequence $\{x_n\}$ converges to infinity with respect to $\omega$ if and only if $\{x_n\}$ converges to a point $\xi\in \partial_G X\setminus\{\omega\}$. This correspondence defines a canonical identification of $\partial_\omega X$ and $\partial_G X\setminus\{\omega\}$.
\end{Thm}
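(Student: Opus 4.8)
The plan is to reduce both notions of convergence to the ordinary Gromov product via the comparison in Lemma~\ref{eq-bbbb}, and then read off the equivalence. First I would fix a basepoint $o\in X$ together with a geodesic ray $\gamma\in\omega$ with $\gamma(0)=o$, and work with the Busemann function $b=b_\gamma$; by \eqref{eq-base-2} and the ensuing remark, convergence to infinity with respect to $\omega$ and equivalence with respect to $\omega$ are independent of the choice of $b\in\mathcal B(X)$ with basepoint $\omega$, so this costs nothing. Lemma~\ref{eq-bbbb} then supplies the master estimate $|(x|y)_{b}-[(x|y)_o-(x|\omega)_o-(y|\omega)_o]|\le\nu$ for all $x,y\in X$, which is the only quantitative input I need. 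Alongside it I would record the point--boundary analogues of \eqref{eq-Gromov-limit} (standard, from the same references): for $\{x_n\}$ converging to infinity with $\{x_n\}\in\xi$ one has $|(x_n|\omega)_o-(\xi|\omega)_o|\le C(\delta)$ for large $n$, together with the dichotomy that $(\xi|\omega)_o=\infty$ exactly when $\xi=\omega$ and $(\xi|\omega)_o<\infty$ otherwise. Note also $(x|\omega)_o\ge 0$ always.

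For the direction ``converges to a point $\xi\in\partial_GX\setminus\{\omega\}$ $\Rightarrow$ converges to infinity with respect to $\omega$'', I take $\{x_n\}\in\xi$ with $\xi\ne\omega$. Then $(x_m|x_n)_o\to\infty$, and since $\xi\ne\omega$ the products $(x_n|\omega)_o$ are bounded by some $M<\infty$; feeding this into the master estimate gives $(x_m|x_n)_{b}\ge (x_m|x_n)_o-2M-\nu\to\infty$, as required. For the converse, suppose $(x_m|x_n)_{b}\to\infty$. Dropping the two nonnegative terms $(x_m|\omega)_o,(x_n|\omega)_o$ in the master estimate yields $(x_m|x_n)_o\ge (x_m|x_n)_{b}-\nu\to\infty$, so $\{x_n\}$ converges to infinity in the ordinary sense and represents some $\xi\in\partial_GX$.

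The main obstacle, and the only delicate point, is to rule out $\xi=\omega$ in this converse. Suppose $\xi=\omega$; then $(x_n|\omega)_o\to\infty$. Using the elementary point--boundary estimate $\limsup_i(x_m|x_i)_o\le (x_m|\omega)_o+C(\delta)$ (valid for each fixed $m$ because $\{x_i\}\in\omega$), I would select indices $m_j\to\infty$ and then $n_j\to\infty$ with $n_j$ large enough that $(x_{m_j}|x_{n_j})_o\le (x_{m_j}|\omega)_o+2C(\delta)$. Along this subsequence the master estimate gives $(x_{m_j}|x_{n_j})_{b}\le 2C(\delta)+\nu-(x_{n_j}|\omega)_o\to-\infty$, contradicting $(x_m|x_n)_{b}\to\infty$ (which forces convergence to $+\infty$ along every diagonal subsequence). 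Hence $\xi\ne\omega$. The subtlety here is precisely the interaction between the joint limit $m,n\to\infty$ defining convergence with respect to $\omega$ and the point--boundary Gromov products, which is why I pass to a carefully chosen subsequence rather than arguing directly with the full joint limit.

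Finally, to upgrade this bijection of sequence classes to the asserted identification of $\partial_\omega X$ with $\partial_GX\setminus\{\omega\}$, I would check that the two equivalence relations agree. If $\{x_n\}$ and $\{y_n\}$ both converge to infinity with respect to $\omega$, then by the cases above they represent boundary points $\ne\omega$, so $(x_n|\omega)_o$ and $(y_n|\omega)_o$ stay bounded; the master estimate applied to the pair $(x_n,y_n)$ then shows $(x_n|y_n)_{b}\to\infty$ if and only if $(x_n|y_n)_o\to\infty$, i.e.\ equivalence with respect to $\omega$ coincides with ordinary equivalence. Combining this with the two directions above, the assignment $[\{x_n\}]_\omega\mapsto[\{x_n\}]$ is the desired canonical bijection, which completes the proof.
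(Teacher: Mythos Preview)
The paper does not prove this statement; it is quoted from \cite[Proposition~3.4.1]{BuSc} without argument, so there is no in-paper proof to compare against. Your proof is correct: the reduction via Lemma~\ref{eq-bbbb}, the two directions of the equivalence, the subsequence argument ruling out $\xi=\omega$, and the matching of the two equivalence relations all go through as written, and the point--boundary Gromov-product estimates you invoke as ``standard'' are indeed available from \cite{BSC,V,BuSc}.
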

According to Theorem~\ref{prop-Gromov}, we shall thus use $\partial_{\omega}X$ instead of $\partial_G X\setminus\{\omega\}$ throughout the rest of the paper.

For $\xi\in \partial_{\omega}X$ and $y\in X$, we define
$$
(\xi|y)_b=(y|\xi)_b=\inf\left\{\liminf_{i\rightarrow\infty}(x_i|y)_b:\; \{x_i\}\in\xi\right\}.
$$
For $\zeta, \xi\in\partial_{\omega}X$, the Gromov product $(\zeta|\xi)_b$ based at $b$ is defined by
$$
(\zeta|\xi)_b=\inf\left\{\liminf_{i\rightarrow\infty}(x_i|y_i)_b:\; \{x_i\}\in\zeta\;\;\mbox{and}\;\; \{y_i\}\in\xi\right\}.
$$

\begin{Thm}[{\cite[Lemma 3.2.4]{BuSc} or \cite[Lemma 2.7]{BuC}}]\label{lem-fun-b}
Let $X$ be $\delta$-hyperbolic, $\omega\in \partial_G X$, and let $b$ be a Busemann function based at $\omega$. Then the following statements hold.
 \ben

\item[$(1)$]
For any $\xi, \zeta\in\partial_\omega X$ and any $\{x_i\}\in\xi$, $\{y_i\}\in\zeta$, we have
\begin{equation}\label{Lim}
(\xi|\zeta)_b\leq \liminf_{i\rightarrow\infty}(x_i|y_i)_b\leq \limsup_{i\rightarrow\infty}(x_i|y_i)_b\leq (\xi|\zeta)_b+600\delta,
\end{equation}
and the same holds if we replace $\zeta$ with $x\in X$.

\item[$(2)$]\label{lem-fun-b-2}
For any $\xi, \zeta, \eta\in X\cup \partial_\omega X$, we have
\begin{equation}\label{3-ponits}
(\xi|\zeta)_b\geq (\xi|\eta)_b \wedge (\eta|\zeta)_b-600\delta.
\end{equation}
\een
\end{Thm}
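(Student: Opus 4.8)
The plan is to reduce everything to a hyperbolic inequality for triples of points of $X$ and then to pass to the Gromov boundary relative to $\omega$. First I would record a normalization: since the notions of convergence and equivalence relative to $\omega$ are independent of the choice of $b\in\mathcal B(X)$ with basepoint $\omega$, and since replacing $b$ by $b+s$ merely shifts every Gromov product $(\,\cdot\,|\,\cdot\,)_b$ by the constant $s$, both \eqref{Lim} and \eqref{3-ponits} are invariant under such a shift. Hence I may assume $b=b_\gamma$ for a fixed geodesic ray $\gamma\in\omega$ with $\gamma(0)=o$. From the definition of the Busemann function, for all $x,y\in X$ the limit
\[
(x|y)_{b}=\lim_{t\to+\infty}\bigl((x|y)_{\gamma(t)}-t\bigr)
\]
exists, where $(x|y)_{\gamma(t)}$ denotes the ordinary Gromov product based at the point $\gamma(t)$.

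The first genuine step is a finite-point inequality. Fixing $x,y,z\in X$, the $\delta$-hyperbolicity of $X$ gives $(x|y)_{\gamma(t)}\geq (x|z)_{\gamma(t)}\wedge (z|y)_{\gamma(t)}-\delta$ for every $t\geq 0$; subtracting $t$ and letting $t\to+\infty$ (the minimum of two convergent sequences tends to the minimum of the limits) yields
\[
(x|y)_{b}\geq (x|z)_{b}\wedge (z|y)_{b}-\delta .
\]
So \eqref{3-ponits} holds, even with the sharp constant $\delta$, whenever $\xi,\zeta,\eta\in X$.

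Next I would establish statement $(1)$. The left inequality $(\xi|\zeta)_b\leq\liminf_{i\to\infty}(x_i|y_i)_b$ is immediate from the definition of $(\xi|\zeta)_b$ as an infimum over representing sequences. The content lies in the upper bound $\limsup_{i\to\infty}(x_i|y_i)_b\leq(\xi|\zeta)_b+600\delta$, i.e.\ in showing that $\liminf_{i\to\infty}(x_i|y_i)_b$ is, up to a bounded error, independent of the representatives $\{x_i\}\in\xi$ and $\{y_i\}\in\zeta$. To do this I would take a second pair of representatives $\{x_i'\}\in\xi$, $\{y_i'\}\in\zeta$; equivalence with respect to $\omega$ gives $(x_i|x_i')_b\to+\infty$ and $(y_i|y_i')_b\to+\infty$, and applying the finite-point inequality (and its symmetric form) to the triples $x_i,x_i',y_i$ and $x_i',y_i,y_i'$ forces $(x_i|y_i)_b$ and $(x_i'|y_i')_b$ to differ by at most a fixed multiple of $\delta$ once $i$ is large. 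Taking the infimum over representatives and tracking the accumulated constants then yields \eqref{Lim}; the variant with $\zeta$ replaced by a point $x\in X$ is identical, using the constant sequence $y_i\equiv x$. This right-hand estimate is exactly the step I expect to be the main obstacle: controlling, uniformly over all representing sequences, how far $\liminf_{i\to\infty}(x_i|y_i)_b$ can lie above the defining infimum is where the hyperbolic inequality must be iterated and where the deliberately generous constant $600\delta$ originates.

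Finally, statement $(2)$ follows by combining the two previous steps. For $\xi,\zeta,\eta\in X\cup\partial_\omega X$ I would choose representatives $\{x_i\}\in\xi$, $\{y_i\}\in\zeta$, $\{z_i\}\in\eta$ (constant sequences for arguments lying in $X$), apply the finite-point inequality $(x_i|y_i)_b\geq (x_i|z_i)_b\wedge (z_i|y_i)_b-\delta$ termwise, and pass to the lower limit using $\liminf_i(a_i\wedge c_i)\geq(\liminf_i a_i)\wedge(\liminf_i c_i)$. Invoking $(1)$ to bound the three liminfs---namely $\liminf_i(x_i|z_i)_b\geq(\xi|\eta)_b$, $\liminf_i(z_i|y_i)_b\geq(\eta|\zeta)_b$, and $(\xi|\zeta)_b\geq\liminf_i(x_i|y_i)_b-600\delta$---then gives \eqref{3-ponits}, the few surplus multiples of $\delta$ being comfortably absorbed by the constant $600\delta$. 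Thus once $(1)$ is available, statement $(2)$ is essentially formal.
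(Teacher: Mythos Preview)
The paper does not prove this statement at all: it is quoted verbatim from the literature (\cite[Lemma~3.2.4]{BuSc} and \cite[Lemma~2.7]{BuC}) and stated as a letter-labeled Theorem with no accompanying argument. So there is no ``paper's own proof'' to compare against.

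Your sketch is a correct outline of the standard argument, and would serve as a proof if fleshed out. One small inefficiency worth pointing out: in your derivation of \eqref{3-ponits} you invoke the hard (upper) inequality of part~$(1)$ via $(\xi|\zeta)_b\geq\liminf_i(x_i|y_i)_b-600\delta$, but this is unnecessary. Once you have the termwise bound $(x_i|y_i)_b\geq(x_i|z_i)_b\wedge(z_i|y_i)_b-\delta$ and pass to $\liminf$, the resulting lower bound $\liminf_i(x_i|y_i)_b\geq(\xi|\eta)_b\wedge(\eta|\zeta)_b-\delta$ holds for \emph{every} choice of representatives $\{x_i\}\in\xi$, $\{y_i\}\in\zeta$; taking the infimum over such representatives then gives $(\xi|\zeta)_b\geq(\xi|\eta)_b\wedge(\eta|\zeta)_b-\delta$ directly, with the sharp constant~$\delta$ rather than~$600\delta$. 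The hard direction of~$(1)$ is genuinely needed only for~$(1)$ itself.
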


For $\epsilon>0$ and $b\in \mathcal{B}(X)$ with the basepoint $\omega$, we define a function $d_{\epsilon, b}$ on $\partial_{\omega}X$ as follows: For any $\zeta, \xi\in\partial_{\omega}X$, define
\begin{equation*}
d_{\epsilon, b}(\zeta, \xi)=e^{-\epsilon(\zeta|\xi)_{b}}.
\end{equation*}
In general, $d_{\epsilon, b}$ does not define a metric.
A metric $d$ on $\partial_{\omega}X$ is called a {\it visual metric} $($based at $b$$)$ with the parameter $\epsilon$ if ${\rm id}_{\partial_{\omega}X}: (\partial_{\omega}X, d)\to (\partial_{\omega}X, d_{\epsilon, b})$
is biLipschitz. It follows from \cite[Proposition 3.3.3]{BuSc}  that  visual metrics on $\partial_{\omega}X$ exist when $\epsilon$ is small enough.
The visual metrics on $\partial_{\omega}X$ do not depend on the choice of $b\in \mathcal{B}(X)$ with the basepoint $\omega$ because of \eqref{eq-base-2}.

A mapping $\Psi: X\rightarrow Y$ between metric spaces $X$ and $Y$ is called {\it strongly $(c_1, c_2, d)$-power quasi-isometric mapping} with $c_2\geq c_1>0$ and $d\geq 0$ if for all quadruples $\{x, y, z, u\}$ in $X$ with $\langle x, y, z, u\rangle\geq 0$,
\begin{equation*}
c_1 \langle x, y, z, u\rangle-d\leq \langle \Psi(x), \Psi(y), \Psi(z), \Psi(u)\rangle\leq c_2 \langle x, y, z, u\rangle+d,
\end{equation*}
where
$$
\langle x, y, z, u\rangle=(x|y)_o+(z|u)_o-(x|z)_o-(y|u)_o
$$
for any chosen basepoint $o\in X$. Obviously, for any $x, y, z, u\in X$,
\begin{equation}\label{eq-strong}
\langle x, y, z, u\rangle=-\langle x, z, y, u\rangle.
\end{equation}

For $c\geq 1$ and $d_0\geq 0$,  \cite[Theorem 4.4.1]{BuSc} states that any $(c^{-1}, c, d_0)$-rough quasi-isometric mapping  between hyperbolic geodesic spaces is strongly $(c^{-1}, c,  d)$-power quasi-isometric, where $d$ depends only on $c$, $d_0$, and the hyperbolicity constants. An analogous argument to  the proof of \cite[Theorem 4.4.1]{BuSc} shows that the same conclusion holds  for $(c_1, c_2, d_0)$-rough quasi-isometric mappings with $c_2\geq c_1>0$ and $d_0\geq 0$. The precise statement is as follows. We omit its proof here.

\begin{lem}\label{thm-strong}
Suppose that $X$ and $Y$ are $\delta_X$- and $\delta_Y$-hyperbolic geodesic spaces with $\delta_X\geq 0$ and $\delta_Y\geq 0$, respectively.
	Let $\Psi: X\rightarrow Y$ be a $(c_1, c_2, d_0)$-rough quasi-isometric mapping. Then $\Psi$ is strongly $(c_1, c_2, d)$-power quasi-isometric, where $d=d(c_1, c_2, d_0, \delta_X, \delta_Y)$.
\end{lem}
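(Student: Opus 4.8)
The plan is to follow the scheme of \cite[Theorem 4.4.1]{BuSc}, carrying the two coefficients $c_1$ and $c_2$ separately in place of the pair $c^{-1},c$. The first observation is that the quantity $\langle x,y,z,u\rangle$ is independent of the basepoint $o$ and equals the pure distance expression
\[
\langle x,y,z,u\rangle=\tfrac12\bigl(d_X(x,z)+d_X(y,u)-d_X(x,y)-d_X(z,u)\bigr).
\]
It is tempting to bound $\langle \Psi(x),\Psi(y),\Psi(z),\Psi(u)\rangle$ termwise, inserting the upper rough-isometry estimate for the two positive distances and the lower estimate for the two negative ones; this fails, because it overshoots the desired bound by $\tfrac{c_2-c_1}{2}\bigl(d_X(x,y)+d_X(z,u)\bigr)$, which is unbounded. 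Hence the hyperbolicity of $X$ and $Y$ must genuinely be used: the four distances are not independent, and $\langle x,y,z,u\rangle$ has to be realized as a \emph{single} geodesic length before $\Psi$ is applied.

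To do this I would fix the basepoint $o=u$, so that $\langle x,y,z,u\rangle=(x|y)_u-(x|z)_u\ge 0$, and choose a geodesic $[u,x]$. Since $0\le (x|z)_u\le (x|y)_u\le d_X(u,x)$, there are points $p,q\in[u,x]$ with $d_X(u,p)=(x|z)_u$ and $d_X(u,q)=(x|y)_u$, whence
\[
d_X(p,q)=(x|y)_u-(x|z)_u=\langle x,y,z,u\rangle .
\]
Thus the cross-difference is exactly the length of a subsegment of one geodesic. Geometrically, $p$ lies within $O(\delta_X)$ of all three sides of the triangle $u,x,z$ (it is an approximate median, or center, of that triangle) and $q$ is an approximate median of the triangle $u,x,y$; this is the ``bridge of the approximate tree'' spanned by the four points. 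Applying the rough-isometry inequality to the single pair $(p,q)$ already produces the correct coefficients:
\[
c_1\,\langle x,y,z,u\rangle-d_0\le d_Y\bigl(\Psi(p),\Psi(q)\bigr)\le c_2\,\langle x,y,z,u\rangle+d_0 .
\]

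It then remains to compare $d_Y(\Psi(p),\Psi(q))$ with $\langle \Psi(x),\Psi(y),\Psi(z),\Psi(u)\rangle$, and this comparison is the main obstacle. The point is that one cannot transfer the ``heights'' $d_Y(\Psi(p),\Psi(u))$ and $(x'|z')_{u'}$ separately, since both merely lie in the same $[c_1,c_2]$-range and need not be close, so the mismatch would reappear. Instead I would use the stability of quasigeodesics in $\delta_Y$-hyperbolic spaces (the Morse lemma): $\Psi$ sends the sides of the triangles $u,x,z$ and $u,x,y$ to quasigeodesics lying within Hausdorff distance $H=H(c_1,c_2,d_0,\delta_Y)$ of the corresponding geodesics of $Y$, so $\Psi(p)$ and $\Psi(q)$ are $O(\delta_Y+H)$-close to the approximate medians of the image triangles $u',x',z'$ and $u',x',y'$, where $x'=\Psi(x)$, etc. Both these medians lie near $[u',x']$, at heights $(x'|z')_{u'}$ and $(x'|y')_{u'}$ from $u'$ and in this order, the order (hence the sign) being fixed by the coarse monotonicity of the image quasigeodesic. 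Consequently
\[
d_Y\bigl(\Psi(p),\Psi(q)\bigr)=\bigl|(x'|y')_{u'}-(x'|z')_{u'}\bigr|+O(\delta_Y+H)=\langle \Psi(x),\Psi(y),\Psi(z),\Psi(u)\rangle+O(\delta_Y+H),
\]
the last step using the order to remove the absolute value. Combining this with the two-sided bound on $d_Y(\Psi(p),\Psi(q))$ and absorbing every additive term into one constant yields the claim with $d=d(c_1,c_2,d_0,\delta_X,\delta_Y)$. The only deviations from \cite[Theorem 4.4.1]{BuSc} are bookkeeping ones: keeping $c_1,c_2$ distinct throughout and verifying that the multiplicative constants land on the correct sides, while the additive errors coming from the $O(\delta_X)$ realization of the bridge, the Morse constant $H$, and the $O(\delta_Y)$ uniqueness of approximate centers are all harmless.
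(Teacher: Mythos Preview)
The paper does not actually prove this lemma; it only remarks that the argument of \cite[Theorem~4.4.1]{BuSc} carries over verbatim with the pair $(c_1,c_2)$ in place of $(c^{-1},c)$ and explicitly omits the details. Your outline is exactly that argument, and the one subtle point---controlling the sign of the image cross-difference via coarse monotonicity of the quasigeodesic $\Psi([u,x])$ along $[u',x']$---is correctly identified and handled.
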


Given a Gromov hyperbolic space $X$ and a point $o\in X$,  we extend the function $\langle \cdot, \cdot, \cdot, \cdot \rangle$ from $X$ to $X \cup \partial_G X$ as follows. For any distinct points \( x, y, z, u \in X \cup \partial_G X \), the quantity $\langle x, y, z, u \rangle$ is defined by
$$
\langle x, y, z, u\rangle=(x|y)_o+(z|u)_o-(x|z)_o-(y|u)_o
$$
for any chosen basepoint $o\in X$.

\begin{lem}\label{lem-langle}
Let $X$ be  a $\delta$-hyperbolic geodesic space and $o\in X$.  Let $\omega\in\partial_G X$, and let $\gamma$ be a geodesic ray from $o$ to $\omega$ with $\gamma(0)=o$.
Let $b$ be the Busemann function associated to $\gamma$ based at $\omega$.  Then the following statements hold.
 \ben
\item[$(i)$]
For any distinct points $x, y, z\in \partial_\omega X$ and for any sequences $\{x_n\}\in x$, $\{y_n\}\in y$, $\{z_n\}\in z$, $\{\omega_n\}\in\omega$ in $X$,
	\begin{align}\label{eq-lan}
	\langle x, y, z, \omega\rangle-C_1\leq & \liminf_{n\to\infty}\langle x_n, y_n, z_n, \omega_n\rangle \notag\\
	\leq& \limsup_{n\to\infty}\langle x_n, y_n, z_n, \omega_n\rangle\leq \langle x, y, z, \omega\rangle+C_1,
	\end{align}
	where $C_1=C_1(\delta)\geq 0$.
 \item[$(ii)$]
 For any distinct points $x, y, z\in X \cup \partial_\omega X$,
	\begin{equation}\label{eq-bboo}
		|(x|y)_b-(x|z)_b-\langle x, y, z, \omega\rangle|\leq C_2,
	\end{equation}
	where $C_2=C_2(\delta)\geq 0$.
\een
\end{lem}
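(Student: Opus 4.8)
The plan is to prove the two parts in sequence, using part $(i)$ as the main stability tool and Lemma~\ref{eq-bbbb} as the computational core of part $(ii)$.

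For $(i)$, I would expand the four-point function into its defining Gromov products based at $o$,
\[
\langle x_n, y_n, z_n, \omega_n\rangle=(x_n|y_n)_o+(z_n|\omega_n)_o-(x_n|z_n)_o-(y_n|\omega_n)_o,
\]
and compare each product with the corresponding product in $\langle x, y, z,\omega\rangle$. Every pair of sequences here converges to a boundary point, so \eqref{eq-Gromov-limit} pins down the liminf and limsup of each of the four products to within $C(\delta)$ of its boundary value. Distributing the liminf and limsup across the signed sum, via $\limsup(A_n+B_n)\le\limsup A_n+\limsup B_n$ and $\liminf(-D_n)=-\limsup D_n$, collapses the four errors into a single constant and yields \eqref{eq-lan} with $C_1=4C(\delta)$. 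The only delicate point is the sign bookkeeping when redistributing the liminf and limsup over the four terms.

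For $(ii)$, I would first settle the interior case $x, y, z\in X$. Lemma~\ref{eq-bbbb} gives $(x|y)_b=(x|y)_o-(x|\omega)_o-(y|\omega)_o+O(\nu)$ and the analogous expression for the pair $(x, z)$; subtracting, the two $(x|\omega)_o$ terms cancel and what remains is precisely $\langle x, y, z,\omega\rangle$, so \eqref{eq-bboo} holds with constant $2\nu$. To reach boundary points I would approximate any boundary point among $x, y, z$ by a sequence in $X$ converging to it and pass to the limit in this interior identity: the left-hand side is controlled by its sequential approximants through Theorem~\ref{lem-fun-b}$(1)$ up to $600\delta$, while the right-hand side is controlled through the stability of Gromov products based at $o$ (the two-boundary version \eqref{eq-Gromov-limit} used in $(i)$, together with its one-boundary analog for products such as $(z_n|\omega)_o$). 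Equivalently, one may first upgrade Lemma~\ref{eq-bbbb} itself to all $x, y\in X\cup\partial_\omega X$ by this approximation and then obtain \eqref{eq-bboo} by pure algebra. Either way the constant enlarges to some $C_2=C_2(\delta)$ absorbing $2\nu$, $600\delta$, and $C_1$.

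The hard part will be the boundary case of $(ii)$: one must handle the mixed Gromov products $(z_n|\omega)_o$ and $(y_n|\omega)_o$, in which $\omega$ is a fixed boundary point while $z_n, y_n$ vary, and simultaneously track three different $\delta$-dependent approximation scales ($\nu$ from Lemma~\ref{eq-bbbb}, $C(\delta)$ from \eqref{eq-Gromov-limit}, and $600\delta$ from Theorem~\ref{lem-fun-b}). What makes this tractable is the distinctness hypothesis, which keeps every Gromov product finite, so that the exact cancellations of the interior identity survive the limiting process and only additive $\delta$-dependent errors accumulate.
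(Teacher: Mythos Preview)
Your proposal is correct and follows essentially the same route as the paper. For $(i)$ the paper does exactly what you describe---apply \eqref{eq-Gromov-limit} to each of the four Gromov products and collect the errors---arriving at the same constant $C_1=4C(\delta)$; for $(ii)$ the paper likewise subtracts the two instances of Lemma~\ref{eq-bbbb} at the sequential level, then passes to the limit using Theorem~\ref{lem-fun-b}$(1)$ on the $b$-side and the stability of the four-point function (i.e., part $(i)$) on the $o$-side, obtaining $C_2=C_1+C'+600\delta$. Your remark that the mixed products $(y_n|\omega)_o$, $(z_n|\omega)_o$ require a one-boundary analog of \eqref{eq-Gromov-limit} is on point and is in fact slightly more careful than the paper, which silently writes $\omega_n$ in place of $\omega$ in \eqref{eq-xn-yn}.
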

\begin{proof}
First, we check the relation in \eqref{eq-lan}.
For this, let $x, y, z\in \partial_\omega X$ be such that $x, y, z, \omega$ are distinct, and let $\{x_n\}\in x$, $\{y_n\}\in y$, $\{z_n\}\in z$ and $\{\omega_n\}\in\omega$ in $X$. Then we know from \eqref{eq-Gromov-limit}  that  there exists $C=C(\delta)\geq 0$ such that
\begin{equation*}
(x|y)_o-C \leq \liminf_{n\rightarrow\infty}(x_n|y_n)_o\leq \limsup_{n\rightarrow\infty}(x_n|y_n)_o\leq (x|y)_o+C,
\end{equation*}
\begin{equation*}
(x|z)_o-C \leq \liminf_{n\rightarrow\infty}(x_n|z_n)_o\leq \limsup_{n\rightarrow\infty}(x_n|z_n)_o\leq (x|z)_o+C,
\end{equation*}
\begin{equation*}
(y|z)_o-C \leq \liminf_{n\rightarrow\infty}(y_n|z_n)_o\leq \limsup_{n\rightarrow\infty}(y_n|z_n)_o\leq (y|z)_o+C,
\end{equation*}
and
\begin{equation*}
(z|w)_o-C \leq \liminf_{n\rightarrow\infty}(z_n|w_n)_o\leq \limsup_{n\rightarrow\infty}(z_n|w_n)_o\leq (z|w)_o+C.
\end{equation*}

Since $x$, $y$, $z$, $\omega$ are four distinct points, we know that all $(x|y)_o$, $(x|z)_o$, $(y|\omega)_o$ and $(z|\omega)_o$ are finite.
Then we get
\begin{align*}
\langle x, y, z, \omega\rangle &\leq \liminf_{n\rightarrow\infty}\big((x_n|y_n)_o+(z_n|\omega_n)_o-(x_n|z_n)_o-(y_n|\omega_n)_o\big)+4C \\
&\leq \limsup_{n\rightarrow\infty}\big((x_n|y_n)_o+(z_n|\omega_n)_o-(x_n|z_n)_o-(y_n|\omega_n)_o\big)+4C
 \leq \langle x, y, z, \omega\rangle+8C.
\end{align*}
This shows that the relation in \eqref{eq-lan} is true by letting $C_1=4C$.

Second, we check the estimate in \eqref{eq-bboo}. For this, let $x$, $y$ and $z$ be distinct points in $X \cup \partial_\omega X$, and let
 $\{x_n\}\in x$, $\{y_n\}\in y$, $\{z_n\}\in z$ and $\{\omega_n\}\in\omega$ be sequences in $X$.
Then by Lemma~\ref{eq-bbbb},
\begin{equation}\label{eq-xn-yn}
\big|(x_n|y_n)_b-(x_n|z_n)_b -\big((x_n|y_n)_o-(x_n|z_n)_o-(y_n|\omega_n)_o+(z_n|\omega_n)_o\big)\big|\leq C^\prime,
\end{equation}
where $C^\prime=C^\prime(\delta)$.

Since Theorem~\ref{lem-fun-b}$(1)$ gives
\begin{align*}
\liminf_{n\rightarrow\infty}\big((x_n|y_n)_b-(x_n|z_n)_b\big)-600\delta
\leq
(x|y)_b-(x|z)_b
 \leq \limsup_{n\rightarrow\infty}\big((x_n|y_n)_b-(x_n|z_n)_b\big)+600\delta,
\end{align*}
we infer from the statement $(i)$ in the lemma and \eqref{eq-xn-yn} that
\begin{align*}
(x|y)_b-(x|z)_b &\leq \limsup_{n\rightarrow\infty}\big((x_n|y_n)_o-(x_n|z_n)_o-(y_n|\omega_n)_o+(z_n|\omega_n)_o\big)+C^\prime+600\delta\\
&\leq  \langle x, y, z, \omega\rangle+C_1+C^\prime+600\delta
\end{align*}
and
\begin{align*}
(x|y)_b-(x|z)_b &\geq \liminf_{n\rightarrow\infty}\big((x_n|y_n)_o-(x_n|z_n)_o-(y_n|\omega_n)_o+(z_n|\omega_n)_o\big)-C^\prime-600\delta\\
&\geq  \langle x, y, z, \omega\rangle-C_1-C^\prime -600\delta.
\end{align*}
Consequently,
\begin{align*}
\big| (x|y)_b-(x|z)_b - \langle x, y, z, \omega\rangle\big|\leq C_1+C^\prime+600\delta,
\end{align*}
which proves \eqref{eq-bboo} by letting $C_2=C_1+C^\prime+600\delta$.
\end{proof}

\subsection{Hyperbolic fillings}\label{sec-3-2}
Let $(Z, d_Z)$ be a metric space.   Denote by $B_Z(x, r)=\{y\in Z:\; d_Z(y, x)<r\}$
 the open ball of radius $r$ centered at $x$, and for $\tau>0$, let $\tau B_Z(x, r)=\{y\in Z:\; d_Z(y, x)<\tau r\}$.

Let us introduce the {\it hyperbolic filling} $X$ of $Z$ based on the construction given by Bulter \cite{BuC,BC}.
Assume that $\alpha>1$ and $\tau>1$ are constants. For each $n\in\mathbb Z$, we select a maximal $\alpha^{-n}$-separated subset $S_n$ of $Z$. The existence of such a set is guaranteed by a standard application of Zorn's lemma. Then for each $n\in\mathbb Z$, the balls $B_Z(z, \alpha^{-n})$ with $z\in S_n$ cover $Z$.
Let
$$
V=\bigcup_{n\in\mathbb Z}V_n,
$$
where $V_n=\{(z, n):\; z\in S_n\}.$  We call each element $(z,n)$ in $V_n$ a {\it vertex}.

To each vertex $v=(z, n)$, we associate the ball $B_Z(v)=B_Z(z, \alpha^{-n})$. We also define the {\it height
function} $h: V\rightarrow \mathbb Z$ by $h(z, n)=n$,
and the {\it projection} $\pi: V\rightarrow Z$ by $\pi(z, n)=z$.

Given two different vertices $v, w\in V$, we say that $w$ is a {\it neighbor} of $v$, denoted by $w\sim v$,
if
\begin{equation*}
|h(v)-h(w)|\leq 1 \;\;\text{ and }\;\;\tau B_Z(v)\cap \tau B_Z(w) \neq \emptyset.
\end{equation*}

Define the hyperbolic filling $X$ of $Z$ to be the graph formed by the vertex set $V$ together with the above neighbor relation (edges), and say that $X$ is the hyperbolic fillings of $Z$ associated to parameters $\alpha$ and $\tau$.
Also, we call $\alpha$ and $\tau$ the {\it construction parameters} of $X$, and require that they satisfy the following relation:
$$
\tau>\max\bigg\{3, \frac{\alpha}{\alpha-1}\bigg\}.
$$

As Butler pointed out in \cite{BuC} that the above constraint of $\tau$ is assumed to ensure that the hyperbolic filling $X$ is connected.


Edges between vertices of different heights are called {\it vertical}. A geodesic (or a geodesic ray, or a geodesic segment) is said {\it vertical} if it is a subset of a union of vertical edges.

We consider $X$ to be a metric graph, where the edges are unit intervals.
The graph distance between two points $x, y \in X$, denoted by $|x-y|$, is the length of the shortest curve connecting them.
It can be shown that $X$ is geodesic and $\delta$-hyperbolic for some $\delta=\delta(\alpha, \tau)>0$ (see \cite[Proposition 5.9]{BuC}).

 For any $x$, $y\in X$, let us recall that $[x, y]$ denotes a geodesic segment connecting $x$ and $y$.
For any $x_0, y_0\in [x, y]$, the inclusion $[x_0, y_0]\subset [x, y]$ means that $[x_0, y_0]$ is the geodesic subsegment of $[x, y]$ connecting $x_0$ and $y_0$.
Clearly, if  $v$, $w\in V$ with $v\sim w$, then $[v, w]$ is an edge in $X$ connecting $v$ and $w$.

%
%
%
%

For any edge $[v, w]$ in $X$, we extend the height function $h$ to $[v, w]$ by
$$
h(x)=th(w)+(1-t)h(v)
$$
for $x\in [v, w]$ with $|x - v|=t\in[0, 1]$.
Then the height function is extended to be a function $h$ on $X$,  which satisfies
$$
|h(x)-h(y)|\leq |x-y|
$$
for any $x, y\in X$ (cf. \cite[Section 5]{BuC}).

A {\it descending geodesic ray} (resp. an {\it ascending geodesic ray}) $\gamma : [0, +\infty)\rightarrow X$ is a vertical geodesic ray such that $h(\gamma(t))$ is strictly decreasing (resp. strictly increasing) as a function of $t$. Then for a descending geodesic ray $\gamma$, we know from the definition of vertical geodesics that for any $t\geq 0$,
$h(\gamma(t))=h(\gamma(0))-t$,
and for an ascending geodesic ray $\gamma$, we have
$h(\gamma(t))=h(\gamma(0))+t$
for any $t\geq 0$.
 Let $\overline{Z}$ be the completion of $Z$. Still, we use $d_Z$ to denote the extension of the metric on $Z$ to its completion.
A vertical geodesic $\gamma$ is {\it anchored} at a point $z\in\overline{Z}$ if for each vertex $v\in\gamma$, $z\in B_{\overline Z}\left(\pi(v), \frac{\tau}{3}\alpha^{-h(v)}\right)$.
When the point $z$ does not need to be referenced, we will just say that $\gamma$ is anchored. For $z\in \overline{Z}$, we know from \cite[Lemma 5.10]{BuC} that there exist an ascending geodesic ray and a descending ray in $X$ anchored at it.
By \cite[Lemma 5.11]{BuC}, there exists a point $\omega\in \partial_G X$ such that all anchored descending geodesic rays in $X$ belong to $\omega$.


\subsection{Infinite hyperbolic cones}\label{sec-3-3}

In this section, we introduce a class of Gromov hyperbolic spaces based on metric spaces $(Z, d_Z)$, called
{\it infinite hyperbolic cones}.
The construction of infinite hyperbolic cones was considered in \cite{BSC, HXu}.
For a metric space $(Z, d_Z)$, its infinite hyperbolic cone is defined as
\begin{equation*}
	\text{Con}_h(Z)=Z\times(0, +\infty),
\end{equation*}
and a metric $\rho_h:$ $\text{Con}_h(Z)\times\text{Con}_h(Z)\rightarrow[0, +\infty)$
is defined by the formula: For $p=(x,s)$ and $q=(y,t)\in\text{Con}_h(Z)$,
\begin{equation}\label{metric-cone}
	\rho_h(p, q)=2\log\frac{d_Z(x, y)+s\vee t}{\sqrt{st}}.
\end{equation}
The metric space $(\text{Con}_h(Z), \rho_h)$ is Gromov hyperbolic (cf. \cite[Section 2]{HXu}).
For a point $z\in Z$, we denote by $R_z$ the ray in $\text{Con}_h(Z)$ that ends at $z\in Z$, that is,
\beq\label{25-5-30}
	R_z=\{z\}\times(0, +\infty).
\eeq
Then $\text{Con}_h(Z)=\bigcup_{z\in Z}R_z$.

\begin{Thm}[{\cite[Theorem 1.1]{HXu}}]\label{thm-cone}
	Suppose that $f$: $(Z,d_Z)\to (W,d_W)$ is a $(\theta, \lambda)$-power quasi-symmetric mapping with $\theta\geq 1$ and $\lambda\geq 1$. Then there is a $(\theta, k)$-rough quasi-isometric mapping
$$\widehat{f}:  {\rm Con}_h(Z) \to {\rm Con}_h(W),$$
where $k=k(\theta, \lambda)$.
\end{Thm}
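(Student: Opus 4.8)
The plan is to write down an explicit extension of $f$ to the cones and to verify the defining inequality of a $(\theta,k)$-rough quasi-isometry directly, arranging the construction so that the single exponent $\theta$ governs the distortion of $\rho_h$. I would look for $\widehat f$ in the radial form $\widehat f(x,s)=(f(x),\psi(x,s))$, where the scale function $\psi(x,\cdot)\colon(0,+\infty)\to(0,+\infty)$ records the size of the $f$-image of the $s$-neighbourhood of $x$; the model to keep in mind is the snowflake $d_W=d_Z^{1/\theta}$, for which $\psi(x,s)=s^{1/\theta}$ and $\widehat f$ is literally radial. The construction of $\psi$ must, however, be made coarsely monotone and Lipschitz in the variable $\log s$: across ``gaps'' of $Z$ (radii at which $\bar B_Z(x,s)$ acquires no new points) I would interpolate $\log\psi(x,\cdot)$ log-linearly, and beyond $\mathrm{diam}\,Z$ (only an issue when $Z$ is bounded) I would extrapolate by a fixed power so that $\widehat f$ remains coarsely onto the apex direction of $\text{Con}_h(W)$. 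Granting such a $\psi$, the theorem reduces to the two inequalities $\theta^{-1}\rho_h(p,q)-k\le \rho_h(\widehat f(p),\widehat f(q))\le \theta\,\rho_h(p,q)+k$ for $p=(x,s)$, $q=(y,t)$ (the target metric computed with $d_W$), together with coboundedness of the image.

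The analytic input is one comparison lemma distilled from $(\theta,\lambda)$-power quasi-symmetry. Applying the defining inequality with $\eta(u)=\lambda u^{1/\theta}$ or $\lambda u^{\theta}$ to the points realising the relevant image-scales yields, first, $\lambda^{-1}\psi(x,d_Z(x,y))\le d_W(f(x),f(y))\le \psi(x,d_Z(x,y))$ (the upper bound is immediate; the lower one holds because a point at radius $\le d_Z(x,y)$ cannot have image-distance more than $\lambda\,d_W(f(x),f(y))$); second, the symmetric comparison $\psi(x,a)\asymp_\lambda\psi(y,a)$ whenever $d_Z(x,y)=a$; and third, the power-growth bound $\lambda^{-1}\mu^{1/\theta}\,\psi(x,s)\le\psi(x,\mu s)\le\lambda\,\mu^{\theta}\,\psi(x,s)$ for $\mu\ge1$. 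Passing to logarithms, this last statement says exactly that $s\mapsto\log\psi(x,s)$ is a one-dimensional $(\theta,\log\lambda)$-rough isometry of $\log s$, which is the mechanism by which the exponent $\theta$, and nothing worse, enters the final coefficient.

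With the lemma in hand I would verify the two-sided inequality by splitting according to the dominant term of $\rho_h$. Writing $a=d_Z(x,y)$ and assuming $s\ge t$: in the vertical-dominated regime $a\lesssim s$ one has $\rho_h(p,q)=\log(s/t)+O(1)$ and $\rho_h(\widehat f(p),\widehat f(q))=\lvert\log\psi(x,s)-\log\psi(x,t)\rvert+O(1)$, so the power-growth bound gives the claim with additive constant $\approx\log\lambda$. In the horizontal-dominated regime $a\gtrsim s$ one has $\rho_h(p,q)=2\log\big(a/\sqrt{st}\big)+O(1)$, while the numerator $d_W(f(x),f(y))+\psi(x,s)\vee\psi(y,t)$ is comparable to $\psi(x,a)$; using $\psi(x,a)^2\asymp\psi(x,s)\,\psi(y,t)\,(a^2/st)^{\theta^{\pm1}}$ (from the growth bound together with $\psi(x,\cdot)\asymp_\lambda\psi(y,\cdot)$) one obtains $\rho_h(\widehat f(p),\widehat f(q))\asymp\theta^{\pm1}\cdot2\log(a/\sqrt{st})$ up to an additive constant, the elementary inequality $(u+v)^{1/\theta}\le u^{1/\theta}+v^{1/\theta}\le2^{1-1/\theta}(u+v)^{1/\theta}$ being what relegates the discrepancy between ``distance $+$ height'' on the two sides to the constant $k$. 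The crossover $a\asymp s$ is handled by either estimate, and assembling the regimes gives the inequality with $k=k(\theta,\lambda)$.

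For coboundedness, since $f$ is a homeomorphism it is surjective, so any $(w,t')\in\text{Con}_h(W)$ has $w=f(x)$; as $\psi(x,\cdot)$ is, by construction, a coarse bijection of $(0,+\infty)$, one may pick $s$ with $\psi(x,s)\asymp t'$, placing $\widehat f(x,s)$ within bounded $\rho_h$-distance of $(w,t')$. I expect the genuine difficulty to lie not in the two-regime computation, which is essentially forced, but in the design of $\psi$: the sharp exponent $\theta$ is destroyed by any jump of the raw image-scale across a gap of $Z$, so the log-linear interpolation and the power extrapolation past $\mathrm{diam}\,Z$ must be carried out and then shown to remain compatible with the comparison lemma near every crossover. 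Verifying that this smoothed $\psi$ still satisfies $d_W(f(x),f(y))\asymp_\lambda\psi(x,d_Z(x,y))$ and the symmetric comparison, with constants depending only on $\theta$ and $\lambda$, is the technical heart of the argument; everything else is bookkeeping of additive constants.
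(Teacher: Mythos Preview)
The paper does not prove this statement itself; it is quoted from \cite[Theorem~1.1]{HXu}, and the paper only sketches the construction: $\widehat f(z,t)=\big(f(z),2^{-\Phi_z(-\log_2 t)}\big)$, where $\Phi_z\colon\mathbb R\to\mathbb R$ is a non-decreasing continuous $(\theta,\mu_0)$-rough quasi-isometry built in \cite[(4.8)]{HXu}. Your radial ansatz $\widehat f(x,s)=(f(x),\psi(x,s))$, your identification of $\log\psi(x,\cdot)$ as a one-dimensional $(\theta,\log\lambda)$-rough isometry of $\log s$, your log-linear interpolation across gaps, and your symmetric comparison $\psi(x,a)\asymp_\lambda\psi(y,a)$ for $a=d_Z(x,y)$ are exactly the ingredients of the cited construction (the last item is \cite[Lemma~4.7]{HXu}, recorded here as Theorem~\ref{lem-cone-2}); the vertical/horizontal case split you propose for the $\rho_h$-estimate is likewise the standard route. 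In short, your plan coincides with the approach of \cite{HXu} that the paper invokes.
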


For the convenience of the readers, we briefly describe the process of constructing this mapping $\widehat{f}$ (see \cite[Section 3]{HXu} for details). Assume that $f$: $(Z,d_Z)\to (W,d_W)$ is a $(\theta, \lambda)$-power quasi-symmetric mapping with $\theta\geq 1$ and $\lambda\geq 1$.
For $z\in Z$,  let $\Phi_z: \IR\to \IR$ be the non-decreasing and continuous function constructed in \cite[(4.8)]{HXu},  which satisfies
\begin{equation}\label{mon-01}
	\lim_{t\to -\infty}\Phi_z(t)=-\infty,  \;\;  \lim_{t\to +\infty}\Phi_z(t)=+\infty \;\;  \text{and} \;\;  \Phi_z(\mathbb R)=\mathbb R.
\end{equation}
Also, $\Phi_z: \IR\to \IR$ is a $(\theta, \mu_0)$-rough quasi-isometric mapping with $\mu_0=\mu_0(\theta, \lambda)$ (see \cite[Lemma 4.4]{HXu}).

%

Based on the function $\Phi_z$, the $(\theta, k)$-rough quasi-isometric mapping
$
\widehat{f}: \text{Con}_h(Z)\to \text{Con}_h(W)
$ in Theorem~\ref{thm-cone} is defined as follows:
For $(z, t)\in \text{Con}_h(Z)$,
	\begin{equation}\label{eq-def-fx}
	\widehat{f}(z, t)=\left(f(z), 2^{-\Phi_z\left(-\log_2 t\right)}\right).
	\end{equation}
Observe that, $\widehat{f}$ maps the ray $R_z$ onto the ray $R_{f(z)}$ for each $z\in Z$.	

We end  this section with the following known result which will be used later on.

\begin{Thm}[{\cite[Lemma 4.7]{HXu}}]\label{lem-cone-2}
 For any $x\neq y\in Z$, if  $t\geq d_Z(x, y)$,  then
$$
\left|\Phi_{x}\left(-\log_2 t\right)- \Phi_{y}\left(-\log_2 t\right)\right|\leq \mu_1,
$$
where $\mu_1=\mu_1(\theta, \lambda)$.
\end{Thm}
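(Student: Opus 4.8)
The cleanest route I see reads the estimate off the geometry of the infinite hyperbolic cone, by comparing the two cone points sitting over $x$ and $y$ at the common radial level $t$. Set $p=(x,t)$ and $q=(y,t)$ in $\text{Con}_h(Z)$. Since both points share the radial coordinate $t$, the metric formula \eqref{metric-cone} gives
$$
\rho_h(p,q)=2\log\frac{d_Z(x,y)+t}{t}=2\log\Big(1+\frac{d_Z(x,y)}{t}\Big),
$$
and the hypothesis $t\geq d_Z(x,y)$ forces $d_Z(x,y)/t\leq 1$, whence $\rho_h(p,q)\leq 2\log 2$. In other words, at scale $t$ the points $x$ and $y$ are indistinguishable up to a universal bound inside the cone.

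Next I would push this through $\widehat f$. By \eqref{eq-def-fx} we have $\widehat f(p)=(f(x),a)$ and $\widehat f(q)=(f(y),b)$ with $a=2^{-\Phi_x(-\log_2 t)}$ and $b=2^{-\Phi_y(-\log_2 t)}$. Discarding the nonnegative term $d_W(f(x),f(y))$ in the numerator of \eqref{metric-cone} yields the lower bound
$$
\rho_h\big(\widehat f(p),\widehat f(q)\big)=2\log\frac{d_W(f(x),f(y))+a\vee b}{\sqrt{ab}}\geq 2\log\frac{a\vee b}{\sqrt{ab}}=\big|\log a-\log b\big|,
$$
and since $\log a-\log b=-\log 2\,\big(\Phi_x(-\log_2 t)-\Phi_y(-\log_2 t)\big)$, the right-hand side equals $\log 2\,\big|\Phi_x(-\log_2 t)-\Phi_y(-\log_2 t)\big|$. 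On the other hand, Theorem~\ref{thm-cone} asserts that $\widehat f$ is $(\theta,k)$-rough quasi-isometric with $k=k(\theta,\lambda)$, so $\rho_h(\widehat f(p),\widehat f(q))\leq \theta\,\rho_h(p,q)+k\leq 2\theta\log 2+k$. Combining the two estimates gives
$$
\big|\Phi_x(-\log_2 t)-\Phi_y(-\log_2 t)\big|\leq 2\theta+\frac{k}{\log 2}=:\mu_1(\theta,\lambda),
$$
which is exactly the claim.

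\textbf{The caveat, and where the real work hides.} This argument invokes Theorem~\ref{thm-cone}, i.e.\ the rough quasi-isometry of $\widehat f$, whereas in the source \cite{HXu} the present statement is one of the ingredients used to establish that very theorem; read inside \cite{HXu}, the shortcut above would be circular. A genuinely independent proof must instead argue directly from the construction of $\Phi_z$ in \cite[(4.8)]{HXu}. The key point is that $2^{-\Phi_z(-\log_2 t)}$ measures the distortion of $f$ at the point $z$ on the scale $t$, and when $t\geq d_Z(x,y)$ the balls $B_Z(x,t)$ and $B_Z(y,t)$ are comparable, each being contained in twice the other. Feeding this comparison into the power-quasi-symmetry inequality — where doubling the radius costs only the factor $\eta(2)=\lambda 2^{\theta}$ — shows that the image scales $a$ and $b$ differ by at most a multiplicative constant $C(\theta,\lambda)$, and taking $\log_2$ recovers the bound with $\mu_1=\log_2 C(\theta,\lambda)$.

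I expect the direct route to be the harder one, precisely because it requires unwinding the exact normalization in the definition of $\Phi_z$ (the monotonization and continuization carried out in \cite[(4.8)]{HXu}) and checking that the doubling estimate survives that normalization with a constant depending only on $\theta$ and $\lambda$. The cone-geometry computation, by contrast, is essentially bookkeeping once Theorem~\ref{thm-cone} is granted; its only delicate point is the clean separation of the radial coordinates obtained by dropping $d_W(f(x),f(y))$ in the cone metric, which isolates exactly the quantity $|\Phi_x-\Phi_y|$.
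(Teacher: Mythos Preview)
The paper does not prove this statement at all; it is quoted verbatim from \cite[Lemma~4.7]{HXu} and used as a black box, so there is no proof in the paper to compare against.

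Your cone-geometry argument is correct and self-contained \emph{within this paper}: the computation $\rho_h(p,q)\le 2\log 2$ when $t\ge d_Z(x,y)$, the lower bound $\rho_h(\widehat f(p),\widehat f(q))\ge |\log a-\log b|$ obtained by dropping $d_W(f(x),f(y))$, and the application of the $(\theta,k)$-rough quasi-isometry from Theorem~\ref{thm-cone} are all clean and yield $\mu_1=2\theta+k/\log 2$. You are also right to flag the circularity issue relative to \cite{HXu}: there Lemma~4.7 precedes and feeds into Theorem~1.1, so your short route would indeed loop if transplanted into that paper. But in the present paper both Theorem~\ref{thm-cone} and Theorem~\ref{lem-cone-2} are imported as established facts, so no circularity arises here, and your derivation actually supplies more than the paper does. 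Your sketch of the direct route via comparability of $B_Z(x,t)$ and $B_Z(y,t)$ is the correct idea for an independent proof, though as you note it requires tracking the precise definition of $\Phi_z$ from \cite{HXu}.
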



\section{Rough quasi-isometric extension of  quasi-symmetric mappings}\label{sec-4}
The purpose of this section is to formulate and prove two results, i.e., Theorems~\ref{thm-emb} and \ref{prop-boundary} below, from which Theorem~\ref{intro-thm-emb}$(i)$ follows. Further, the extension constructed in Theorem~\ref{thm-emb} is modified to satisfy the vertex-to-vertex property as stated in Theorem~\ref{prop-trun-X}.
Before this, we first introduce a rough similarity between the infinite hyperbolic cone and a hyperbolic filling of complete metric space (see \cite[Theorem 3.4]{HXu}).

Let $(Z, d_Z)$ be a complete metric space, $X_Z$ a hyperbolic filling of $Z$ with parameters $\alpha_Z>1$ and $\tau_Z$ satisfying $\tau_Z>\max\Big\{3, \frac{\alpha_Z}{\alpha_Z-1}\Big\}$, and let $V_Z$ denote the vertex set of $X_Z$.
As mentioned in Subsection~\ref{sec-3-2}, there is a point in $\partial_G X_Z$ such that all anchored descending geodesic rays in $X_Z$ belong to it. For convenience, we use $\omega$ to denote this point.
Let $\partial_\omega X_Z$ be the Gromov boundary of $X_Z$ relative to $\omega$.

Define a mapping $\psi: Z\rightarrow \partial_\omega X_Z$ by setting $\psi(z)=\xi$, where $\xi$ is the equivalence class in $\partial_\omega X_Z$ defined by an ascending geodesic ray anchored at $z$.
 The following result shows that the mapping $\psi$ determines an identification of $Z$ with
$\partial_\omega X_Z$.

\begin{Thm}[{\cite[Proposition 5.13]{BuC}}]\label{prop-identi}
The mapping $\psi: Z\rightarrow \partial_\omega X_Z$ defines an identification of $Z$ with
$\partial_\omega X_Z$.
Under this identification, the metric $d_Z$ on $Z$ defines a visual metric on $\partial_\omega X_Z$ with the parameter $\epsilon=\log\alpha_Z$.
\end{Thm}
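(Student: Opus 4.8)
The plan is to establish both assertions together by reducing everything to a single estimate for the Busemann-based Gromov product of two boundary points in terms of $d_Z$. First I would verify that $\psi$ is well defined: if $\gamma$ and $\gamma'$ are ascending geodesic rays both anchored at $x\in Z$, then at each height $n$ their vertices project into the ball $B_{\overline Z}(x,\frac{\tau_Z}{3}\alpha_Z^{-n})$, so the two columns stay at uniformly bounded distance in $X_Z$; hence the rays are equivalent with respect to $\omega$ and determine the same class in $\partial_\omega X_Z$. The existence of at least one anchored ascending ray is guaranteed by \cite[Lemma 5.10]{BuC}, so $\psi\colon Z\to\partial_\omega X_Z$ is a genuine map.

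The heart of the matter is the following estimate. Fix $o\in X_Z$ and a descending ray $\gamma_\omega$ from $o$ with associated Busemann function $b$ based at $\omega$. Since all anchored descending rays belong to $\omega$ and moving toward $\omega$ lowers the height $h$, one first checks that $b$ coincides with $h$ up to an additive constant, $b(v)=h(v)+O(1)$ for every vertex $v$. For distinct $x,y\in Z$ with ascending anchored rays $\gamma_x\in\psi(x)$ and $\gamma_y\in\psi(y)$, a geodesic in $X_Z$ joining $\gamma_x(i)$ and $\gamma_y(i)$ descends to a branch height $n_0$, where the two columns first become neighbors, and then ascends. The neighbor relation $\tau_Z B_Z(v)\cap\tau_Z B_Z(w)\neq\emptyset$ at height $n$ forces $d_Z(x,y)\lesssim\alpha_Z^{-n}$, while separation above that height forces $d_Z(x,y)\gtrsim\alpha_Z^{-n}$; together these pin down $n_0=-\log_{\alpha_Z}d_Z(x,y)+O(1)$, with the $O(1)$ depending only on $\alpha_Z$ and $\tau_Z$. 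Consequently $|\gamma_x(i)-\gamma_y(i)|=h(\gamma_x(i))+h(\gamma_y(i))-2n_0+O(1)$, and substituting this together with $b\approx h$ into $(v|w)_b=\frac12(b(v)+b(w)-|v-w|)$ gives $(\gamma_x(i)|\gamma_y(i))_b=n_0+O(1)$. Passing to the limit via Theorem~\ref{lem-fun-b}$(1)$ yields
\[
\big(\psi(x)\,\big|\,\psi(y)\big)_b=-\log_{\alpha_Z}d_Z(x,y)+O(1).
\]

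Taking $\epsilon=\log\alpha_Z$, this estimate becomes $d_{\epsilon,b}(\psi(x),\psi(y))=e^{-\epsilon(\psi(x)|\psi(y))_b}\asymp d_Z(x,y)$; that is, the metric obtained by transporting $d_Z$ through $\psi$ is biLipschitz to $d_{\epsilon,b}$, which is precisely the statement that $d_Z$ is a visual metric with parameter $\epsilon=\log\alpha_Z$. The same two-sided bound makes $\psi$ injective and biLipschitz onto its image, hence a homeomorphism onto it. For surjectivity I would take $\xi\in\partial_\omega X_Z$ represented by a sequence $\{v_i\}$ converging to infinity with respect to $\omega$; the balls $B_Z(v_i)$ shrink and their centers form a $d_Z$-Cauchy sequence, so by completeness of $Z$ they converge to some $x\in Z$, and one verifies $\xi=\psi(x)$. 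This completes the identification.

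The step I expect to be the main obstacle is the branch-height estimate $n_0=-\log_{\alpha_Z}d_Z(x,y)+O(1)$ together with the resulting control on $|\gamma_x(i)-\gamma_y(i)|$: converting the qualitative picture that the columns merge exactly where the balls begin to overlap into additive-constant bounds requires careful use of the maximal $\alpha_Z^{-n}$-separation of the sets $S_n$, the standing inequality $\tau_Z>\max\{3,\alpha_Z/(\alpha_Z-1)\}$ (which guarantees connectivity and the good behavior of vertical geodesics), and the $\delta$-hyperbolicity of $X_Z$ to compare the true geodesic with the vertical descend-then-ascend path. By contrast, the reduction $b\approx h$ and the passage to the boundary Gromov product through Lemma~\ref{eq-bbbb} and Theorem~\ref{lem-fun-b} are routine once this geometric estimate is secured.
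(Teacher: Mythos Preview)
The paper does not give its own proof of this statement: it is quoted verbatim as \cite[Proposition~5.13]{BuC} and used as a black box, so there is nothing in the present paper to compare your argument against. Your outline is a reasonable sketch of how such a result is established (and is in the spirit of Butler's original argument): pin down the branch height $n_0\approx -\log_{\alpha_Z} d_Z(x,y)$ via the overlap/separation dichotomy for the balls $\tau_Z B_Z(v)$, deduce $(\psi(x)\,|\,\psi(y))_b = n_0 + O(1)$, and read off both bijectivity and the visual-metric comparison. One small caution on surjectivity: an arbitrary sequence $\{v_i\}$ converging to infinity with respect to $\omega$ need not have increasing heights or projections that are directly Cauchy; you should first replace it (using $\delta$-hyperbolicity and the geodesic/visual structure of $X_Z$) by an ascending anchored geodesic ray in the same class before extracting the limit point in $Z$.
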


According to Theorem~\ref{prop-identi}, we know that for any pair of points $\xi$ and $\xi^\prime$ in $\partial_\omega X_Z$, there are points $z$ and $z^\prime \in Z$ such that $\xi=\psi(z)$ and $\xi^\prime=\psi(z^\prime)$. Let
\begin{equation}\label{d-visual}
	 d_\omega(\xi, \xi^\prime)=d_Z(z, z^\prime).
	\end{equation}
Then the metric $d_Z$ on $Z$ induces a visual metric $d_\omega$ on $\partial_\omega X_Z$ with parameter $\epsilon=\log\alpha_Z$.  More precisely, there exists $C=C(\alpha_Z,\tau_Z)\geq1$ such that
\begin{equation}\label{compare}
C^{-1}\alpha_Z^{-(\xi|\xi^\prime)_b} \leq d_\omega(\xi, \xi^\prime)\leq  C \alpha_Z^{-(\xi|\xi^\prime)_b},
\end{equation}
where $b:=b_\gamma$ is a Busemann function and $\gamma$ is an anchored descending geodesic ray which belongs to $\omega$.

 From now on, we equip $\partial_\omega X_Z$ with the metric $d_\omega$, and then, we can identify $(\partial_\omega X_Z,d_\omega)$ with $(Z, d_Z)$ via the homeomorphism $\psi: Z\rightarrow \partial_\omega X_Z$. Therefore, in the remainder of this section, we will not distinguish between $\partial_\omega X_Z$ and $Z$.

 For any $z\in Z$, let $\gamma_z:\mathbb R\rightarrow X_Z$ be a vertical geodesic anchored at $z$ such that
\begin{equation}\label{eq-6-24}
h(\gamma_z(t))=t
\end{equation}
for any $t\in\mathbb R$. The existence of $\gamma_z$ is guaranteed by the requirement of $\tau_Z>\max\left\{3, \frac{\alpha_Z}{\alpha_Z-1}\right\}$ (see \cite[Lemma 5.10]{BuC}).
Then $\gamma_z|_{[0, +\infty)}\in z$, and thus, by Theorem~\ref{prop-Gromov},
\begin{equation}\label{eq-6-24-1}
\{\gamma_z(n)\}_{n\in\mathbb N}\in  z  \ \text{ with respect to } \omega.
\end{equation}
Moreover, it follows from \eqref{eq-6-24} that for any $x\in \gamma_z$,
\begin{equation}\label{25-1-29-1}
\gamma_z(h(x))=x.
\end{equation}

	In general, the anchored vertical geodesic $\gamma_z$ may not be unique at $z\in Z$. We make a notational convention: {\it In the rest of this section, for every $z\in Z$, we fix a vertical geodesic anchored at it satisfying \eqref{eq-6-24} and \eqref{eq-6-24-1}, denoted by $\gamma_z$.}

Let us recall a mapping
	$
	\sigma: {\rm Con}_h(Z)\rightarrow X_Z
	$ (cf. \cite[Subsection 3.2]{HXu}): For any $(\xi, s)\in {\rm Con}_h(Z),$
	\begin{equation}\label{f-con}
		\sigma(\xi, s)=\gamma_{\xi}\left(-\frac{\log s}{\log\alpha_Z}\right).
	\end{equation}
Clearly, $\sigma$ maps each ray $R_{\xi}$ onto the vertical geodesic $\gamma_{\xi}$.
For any $(\xi, s)\in {\rm Con}_h(Z)$,
\begin{equation}\label{sigma-h}
h(\sigma(\xi, s))=-\frac{\log s}{\log\alpha_Z}.
\end{equation}
For $s_1, s_2\in(0, +\infty)$, if $s_1<s_2$, then $h(\sigma(\xi, s_1))> h(\sigma(\xi, s_2))$.

 In \cite{HXu}, the first and the fourth authors of the paper constructed rough similarities from ${\rm Con}_h(Z)$ to $X_Z$; see \cite[Theorem 3.4]{HXu}.
 Although the construction of the hyperbolic filling in \cite{HXu} is not the same as the one used in this paper, by the similar reasoning as in the proof of \cite[Theorem 3.4]{HXu}, we see that this result is still valid for the setting of this paper. 
For the sake of application, we state it as a proposition, the proof of which we omit for brevity.
\begin{prop}\label{thm-similar}
The mapping $\sigma: ({\rm Con}_h(Z),\rho_h)\rightarrow (X_Z, |\cdot|)$ is a $(1/\log\alpha_Z, C_\sigma)$-rough similarity with $C_\sigma=C_\sigma(\alpha_Z, \tau_Z)$.
\end{prop}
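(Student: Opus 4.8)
The plan is to prove that, for $p=(x,s)$ and $q=(y,t)$ in ${\rm Con}_h(Z)$, the quantities $\frac{1}{\log\alpha_Z}\rho_h(p,q)$ and $|\sigma(p)-\sigma(q)|$ agree up to an additive error depending only on $\alpha_Z$ and $\tau_Z$; this is exactly the assertion that $\sigma$ is a $(1/\log\alpha_Z, C_\sigma)$-rough similarity. Throughout I write $O(1)$ for a quantity bounded by a constant depending only on the hyperbolicity constant $\delta=\delta(\alpha_Z,\tau_Z)$ and the visual constant $C=C(\alpha_Z,\tau_Z)$ of \eqref{compare}. The first, purely algebraic, step rewrites the cone metric. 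Setting $m=h(\sigma(p))$ and $n=h(\sigma(q))$, formula \eqref{sigma-h} gives $s=\alpha_Z^{-m}$ and $t=\alpha_Z^{-n}$, so that $s\vee t=\alpha_Z^{-(m\wedge n)}$ and $\sqrt{st}=\alpha_Z^{-(m+n)/2}$. Substituting into \eqref{metric-cone} and dividing by $\log\alpha_Z$ yields
\[
\frac{\rho_h(p,q)}{\log\alpha_Z}=2\log_{\alpha_Z}\!\big(d_Z(x,y)+\alpha_Z^{-(m\wedge n)}\big)+m+n .
\]
Since $\log_{\alpha_Z}(a+b)=\log_{\alpha_Z}(a\vee b)+O(1)$ and, writing $L:=-\log_{\alpha_Z}d_Z(x,y)$, the maximum inside equals $-\min\{m,n,L\}+O(1)$, I obtain
\[
\frac{\rho_h(p,q)}{\log\alpha_Z}=m+n-2\min\{m,n,L\}+O(1).
\]

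The substantive second step computes the filling distance via Busemann products. Let $b=b_\gamma$ be the Busemann function of an anchored descending ray belonging to $\omega$, as in \eqref{compare}, and set $u=\sigma(p)=\gamma_x(m)$, $v=\sigma(q)=\gamma_y(n)$, $\xi=\psi(x)$, $\zeta=\psi(y)$. I would use the identity $|u-v|=b(u)+b(v)-2(u|v)_b$ and estimate each term. (a) Along an anchored geodesic the Busemann value tracks the height up to a bounded shift, $b(\gamma_x(r))=r+O(1)$, which I would derive from Lemma~\ref{eq-bbbb} together with $|h(\cdot)-h(\cdot)|\le|\cdot-\cdot|$; hence $b(u)=m+O(1)$ and $b(v)=n+O(1)$. (b) Because $u$ lies on the ray $\gamma_x$ belonging to $\xi$, evaluating the Gromov product on the sequence $\gamma_x(M)\to\xi$ and invoking Theorem~\ref{lem-fun-b}$(1)$ gives $(u|\xi)_b=m+O(1)$, and likewise $(v|\zeta)_b=n+O(1)$. (c) The visual comparison \eqref{compare} with $d_\omega(\xi,\zeta)=d_Z(x,y)$ from \eqref{d-visual} gives $(\xi|\zeta)_b=L+O(1)$. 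Feeding these into the hyperbolic inequality \eqref{3-ponits} of Theorem~\ref{lem-fun-b} and its standard reverse estimate at the boundary produces $(u|v)_b=\min\{(u|\xi)_b,(\xi|\zeta)_b,(\zeta|v)_b\}+O(1)=\min\{m,n,L\}+O(1)$, whence
\[
|\sigma(p)-\sigma(q)|=b(u)+b(v)-2(u|v)_b=m+n-2\min\{m,n,L\}+O(1).
\]

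Comparing the two final displays gives $|\sigma(p)-\sigma(q)|=\frac{1}{\log\alpha_Z}\rho_h(p,q)+O(1)$, which is the claim with $C_\sigma=C_\sigma(\alpha_Z,\tau_Z)$. The main obstacle is the pair of anchoring estimates hidden in step (a)–(c): that $b$ agrees with $h$ up to a bounded shift along \emph{every} anchored geodesic, and that two anchored geodesics $\gamma_x,\gamma_y$ remain within bounded graph distance below the separation height $L$ and diverge above it. This is precisely where the construction parameters enter, through the neighbour condition $\tau_Z B_Z(v)\cap\tau_Z B_Z(w)\ne\emptyset$ with $\tau_Z>\max\{3,\alpha_Z/(\alpha_Z-1)\}$ and the anchoring inclusions $z\in B_{\overline Z}(\pi(v),\tfrac{\tau_Z}{3}\alpha_Z^{-h(v)})$; a minor technical point is that $\sigma(p)$ need not be a vertex, which is handled uniformly by the extended height function and the bound $|h(x)-h(y)|\le|x-y|$. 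Since, as the excerpt records, this is exactly the content of \cite[Theorem 3.4]{HXu} transported to the present construction, I would invoke that argument for the anchoring estimates rather than reproduce them in full.
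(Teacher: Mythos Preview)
The paper does not prove this proposition at all: it simply records that the statement is \cite[Theorem~3.4]{HXu} transported to the present construction and omits the argument. Your sketch therefore already goes further than the paper, and your two-step comparison of $\rho_h/\log\alpha_Z$ and $|\sigma(p)-\sigma(q)|$ with the common model quantity $m+n-2\min\{m,n,L\}$ is the right strategy and matches how the result is obtained in \cite{HXu}.

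Two points deserve attention. First, you never verify that $\sigma({\rm Con}_h(Z))$ is $C_\sigma$-cobounded in $X_Z$, which is part of the definition of a rough similarity in this paper; this is easy (every vertex $v$ is a neighbour of $\gamma_{\pi(v)}(h(v))=\sigma(\pi(v),\alpha_Z^{-h(v)})$ by the anchoring inclusion), but it must be said. Second, your justification of step~(a), that $b(\gamma_x(r))=r+O(1)$ \emph{uniformly in $x$}, does not follow from Lemma~\ref{eq-bbbb} and the Lipschitz bound on $h$ alone. What is really needed is the construction-specific fact that any two anchored descending rays eventually become neighbours at a common height (so that the Busemann function coincides with $h$ up to a single additive constant on all of $X_Z$, not an $x$-dependent one); this is exactly the ``anchoring estimate'' you flag at the end, and it is where the assumption $\tau_Z>\max\{3,\alpha_Z/(\alpha_Z-1)\}$ enters. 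Once that is in hand, the upper bound $(u|v)_b\le\min\{m,n,L\}+O(1)$ that you call the ``standard reverse estimate'' follows cleanly: $(u|v)_b\le\min\{b(u),b(v)\}$ since $b$ is $1$-Lipschitz, and $(\xi|\zeta)_b\ge(\xi|u)_b\wedge(u|v)_b\wedge(v|\zeta)_b-O(1)$ forces $(u|v)_b\le L+O(1)$ because the other two terms already dominate $(u|v)_b$.
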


The following proposition ensures the existence of such a rough inverse of $\sigma$.

\begin{prop}\label{inverse-sigma}
There exists a rough inverse $\sigma^{-1}: (X_Z, |\cdot|) \to ({\rm Con}_h(Z),\rho_h)$ of $\sigma$ satisfying the following properties.
\begin{enumerate}
	\item[$(i)$] $\sigma^{-1}$ is a $(\log\alpha_Z, C_{\sigma^{-1}})$-rough similarity with $C_{\sigma^{-1}} = C_{\sigma^{-1}}(\alpha_Z, \tau_Z)$.
	\item[$(ii)$] For any $(z, t) \in  ({\rm Con}_h(Z),\rho_h)$,  if $\sigma^{-1}(\sigma(z, t)) = (z_0, s)$, then
	\begin{equation}\label{y-y0-t}
		\sigma(z_0, s) = \sigma(z, t) \ \ \ \text{ and } \ \ \ s=t.
	\end{equation}
	If in addition $\sigma(z, t)$ is a vertex in $X_Z$, then
	$$d_Z(z, z_0) \leq \frac{2}{3}\tau_Z t.$$
\end{enumerate}
\end{prop}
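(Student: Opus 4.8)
The plan is to build $\sigma^{-1}$ explicitly as a \emph{height-preserving} selection, so that it is an exact right inverse of $\sigma$ on the image of $\sigma$ and merely an approximate one elsewhere. For $x\in X_Z$ set $s_x=\alpha_Z^{-h(x)}$, so that by \eqref{sigma-h} any preimage we assign has the same $\sigma$-image height as $x$. If $x\in\gamma_z$ for some $z\in Z$, fix one such $z$, call it $z_0=z_0(x)$, and put $\sigma^{-1}(x)=(z_0,s_x)$; then by \eqref{25-1-29-1} we get $\sigma(\sigma^{-1}(x))=\gamma_{z_0}(h(x))=x$ \emph{exactly}. If $x$ lies on no fixed anchored geodesic, choose a vertex $v$ with $|x-v|\le\tfrac12$, set $z_0=\pi(v)$, and again put $\sigma^{-1}(x)=(z_0,s_x)$.

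I would first verify $(ii)$, which is almost immediate from the construction. For $x=\sigma(z,t)=\gamma_z(h(x))$ the first case always applies, since $x\in\gamma_z$; hence $\sigma(\sigma^{-1}(x))=x=\sigma(z,t)$, and comparing heights through \eqref{sigma-h} forces $s=t$. When $x$ is in addition a vertex, $h(x)=n\in\Z$ and $t=\alpha_Z^{-n}$; writing $x=(\pi(x),n)$, the fact that $\gamma_z$ and $\gamma_{z_0}$ are both anchored at this common vertex gives $d_Z(z,\pi(x))<\tfrac{\tau_Z}{3}\alpha_Z^{-n}$ and $d_Z(z_0,\pi(x))<\tfrac{\tau_Z}{3}\alpha_Z^{-n}$, whence $d_Z(z,z_0)<\tfrac{2}{3}\tau_Z\alpha_Z^{-n}=\tfrac23\tau_Z t$ by the triangle inequality.

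For $(i)$ and the rough-inverse properties I would record the elementary fact that any two vertices of equal height $n$ both anchored at a common base point are neighbors: their $\tau_Z$-dilated balls both contain that base point, so they are joined by an edge and lie within graph-distance $1$. Applying this to $\gamma_{\pi(v)}(n)$ and $v=(\pi(v),n)$, together with $|\gamma_{z_0}(h(x))-\gamma_{z_0}(n)|=|h(x)-n|\le\tfrac12$ from \eqref{eq-6-24}, yields $|\sigma(\sigma^{-1}(x))-x|\le 2$ in the second case (and $0$ in the first), so $\sigma\circ\sigma^{-1}\simeq\mathrm{id}_{X_Z}$. Then, for $x,y\in X_Z$ with $p=\sigma^{-1}(x),\,q=\sigma^{-1}(y)$, the bound $\bigl|\,|\sigma(p)-\sigma(q)|-|x-y|\,\bigr|\le 4$ fed into the two-sided estimate $\tfrac1{\log\alpha_Z}\rho_h(p,q)-C_\sigma\le|\sigma(p)-\sigma(q)|\le\tfrac1{\log\alpha_Z}\rho_h(p,q)+C_\sigma$ of Proposition~\ref{thm-similar} produces $\log\alpha_Z|x-y|-C_{\sigma^{-1}}\le\rho_h(\sigma^{-1}(x),\sigma^{-1}(y))\le\log\alpha_Z|x-y|+C_{\sigma^{-1}}$ with $C_{\sigma^{-1}}=\log\alpha_Z\,(4+C_\sigma)=C_{\sigma^{-1}}(\alpha_Z,\tau_Z)$. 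The cobounded image and the remaining relation $\sigma^{-1}\circ\sigma\simeq\mathrm{id}_{{\rm Con}_h(Z)}$ both follow by applying the lower bound of Proposition~\ref{thm-similar} to the pair $\sigma^{-1}(\sigma(p))$ and $p$, which have equal image under $\sigma$ by $(ii)$ and are therefore $\rho_h$-close.

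I expect the main obstacle to be the bookkeeping of the second case, namely guaranteeing that the height-preserving choice stays within bounded distance of $x$ even when $x$ lies on none of the fixed anchored geodesics, which is exactly where the ``equal height, common anchor $\Rightarrow$ neighbor'' lemma is needed to tie the combinatorial vertex structure of $X_Z$ to the metric estimates; everything else is then a routine transport of the rough-similarity bound for $\sigma$ through $\sigma^{-1}$.
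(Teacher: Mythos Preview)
Your proof is correct and follows essentially the same route as the paper: construct $\sigma^{-1}$ so that $\sigma\circ\sigma^{-1}$ is exactly the identity on the image of $\sigma$ and only approximately so elsewhere, then transport the two-sided rough-similarity bounds of Proposition~\ref{thm-similar} to $\sigma^{-1}$ and read off $(ii)$ from the construction. The only difference is cosmetic: for points $x\notin\bigcup_z\gamma_z$ the paper invokes the abstract $C_\sigma$-coboundedness of $\sigma$ to pick a nearby preimage, whereas you make an explicit height-preserving choice via the nearest vertex and the ``equal height, common anchor $\Rightarrow$ neighbor'' observation---both produce the same uniform bound on $|\sigma(\sigma^{-1}(x))-x|$.
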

 \begin{proof}
 	Given that $\sigma: ({\rm Con}_h(Z),\rho_h)\rightarrow (X_Z, |\cdot|)$ is a $(1/\log\alpha_Z, C_\sigma)$-rough similarity, for any two points $(\xi_1, t_1), (\xi_2, t_2)\in {\rm Con}_h(Z)$,
 	\begin{equation}\label{rough-simi}
 		\frac{\rho_h\big((\xi_1, t_1), (\xi_2, t_2)\big)}{\log \alpha_Z} -C_\sigma\leq |\sigma(\xi_1, t_1)-\sigma(\xi_2, t_2)|\leq \frac{\rho_h\big((\xi_1, t_1), (\xi_2, t_2)\big)}{\log \alpha_Z} +C_\sigma.
 	\end{equation}
 	Moreover, the image of ${\rm Con}_h(Z)$ under $\sigma$ is $C_\sigma$-cobounded in $X_Z$. This implies that  for any $x\in X_Z$, there  exists a point $(\xi, t)\in {\rm Con}_h(Z)$ such that
 \begin{equation}\label{sigma-cobounded}
 	|x- \sigma(\xi, t)|\leq C_\sigma+1.
 \end{equation}

Now, we construct a rough inverse $\sigma^{-1}$ of $\sigma$ as follows. Let $x\in X_Z$.
If the set
$$\sigma^{-1}(\{x\}):=\{(z, t)\in {\rm Con}_h(Z) : \; \sigma(z, t)=x\}$$
 is not empty, then we choose a point $(z_0, t_0)\in \sigma^{-1}(\{x\})$ and define $\sigma^{-1}(x)=(z_0, t_0)$. If $\sigma^{-1}(\{x\})$ is empty, by \eqref{sigma-cobounded}, there exists a point $(z^\prime_0 , t^\prime_0)\in {\rm Con}_h(Z)$ such that
 $$|x- \sigma(z^\prime_0 , t^\prime_0)|\leq C_\sigma+1.$$
In this case, we define $\sigma^{-1}(x)=(z^\prime_0 , t^\prime_0)$. It is clear that $\sigma^{-1}$ is a mapping from $X_Z$ to ${\rm Con}_h(Z)$.

By the construction, we know that for any $x\in X_Z$,
$$|\sigma\circ \sigma^{-1}(x)- x|\leq C_\sigma+1,$$   which implies that $\sigma\circ \sigma^{-1} \simeq \text{id}_{X_Z}$.
On the other hand, for any $x\in \sigma({\rm Con}_h(Z))$, we know that
\begin{equation}\label{si-in-x}
\sigma\circ \sigma^{-1}(x)= x.
\end{equation}
Then $\sigma\circ \sigma^{-1}\circ \sigma(z, t)=\sigma(z, t)$ for any $(z, t)\in {\rm Con}_h(Z)$. It follows from \eqref{rough-simi} that
$$\frac{1}{\log{\alpha_Z}}  \rho_h\big(\sigma^{-1}\circ \sigma(z, t), (z, t)\big)-C_\sigma\leq   |\sigma\circ\sigma^{-1}\circ \sigma(z, t)-\sigma(z, t)|=0,$$
 and thus, $ \rho_h(\sigma^{-1}\circ \sigma(z, t), (z, t))\leq C_\sigma\log\alpha_Z$. Hence, $\sigma^{-1}\circ \sigma\simeq \text{id}_{{\rm Con}_h(Z)}$. Therefore, $\sigma^{-1}$ is a rough inverse of $\sigma$.

Next, we show that the mapping $\sigma^{-1}: X_Z \to {\rm Con}_h(Z)$
 satisfies the statements $(i)$.
For this, let $x_1$, $x_2\in X_Z$, and let  $(z_1, t_1)$, $(z_2, t_2)\in {\rm Con}_h(Z)$ be such that $(z_1, t_1)=\sigma^{-1}(x_1)$ and $(z_2, t_2)=\sigma^{-1}(x_2)$. Then by \eqref{rough-simi}, we obtain
\begin{align*}
(\log\alpha_Z) |\sigma(z_1, t_1)-\sigma(z_2, t_2)|- C_\sigma\log\alpha_Z
&\leq  \rho_h(\sigma^{-1}(x_1), \sigma^{-1}(x_2)) \\
&\leq (\log\alpha_Z) |\sigma(z_1, t_1)-\sigma(z_2, t_2)|+ C_\sigma\log\alpha_Z.
\end{align*}
Note that $|x_i-\sigma(z_i, t_i)|\leq C_\sigma+1$ for $i=1, 2$. Hence we get
\begin{align*}
(\log\alpha_Z)  |x_1-x_2|-(3C_\sigma+2)\log\alpha_Z
&\leq  \rho_h(\sigma^{-1}(x_1), \sigma^{-1}(x_2)) \\
&\leq  (\log\alpha_Z) |x_1-x_2|+ (3C_\sigma+2)\log\alpha_Z.
\end{align*}

Still, it remains to show that $\sigma^{-1}(X_Z)$ is cobounded in ${\rm Con}_h(Z)$.
To reach this goal, let $(z, t)\in {\rm Con}_h(Z)$ and let $y=\sigma(z, t)$. Set $\sigma^{-1}(y)=(z^\prime, t^\prime)$. Then by \eqref{si-in-x},
$$
\sigma(z^\prime, t^\prime)=\sigma\circ\sigma^{-1}(y)=y=\sigma(z, t),
$$
and thus, it follows from \eqref{rough-simi} that
$$
\rho_h((z, t), \sigma^{-1}(y))=\rho_h\big((z, t), (z^\prime, t^\prime)\big)\leq (\log\alpha_Z)|\sigma(z, t)-\sigma\circ\sigma^{-1}(y)| +C_\sigma\log\alpha_Z=C_\sigma\log\alpha_Z.
$$
This shows that $\sigma^{-1}(X_Z)$ is $(C_\sigma\log\alpha_Z)$-cobounded in ${\rm Con}_h(Z)$.

Consequently, $\sigma^{-1}: X_Z \to {\rm Con}_h(Z)$ is a  $(\log\alpha_Z, C_{\sigma^{-1}})$-rough similarity with $C_{\sigma^{-1}}=(3C_\sigma+2)\log\alpha_Z$, and thus, the statement $(i)$ holds.

In the following, we show that $\sigma^{-1}$ also satisfies the statement $(ii)$. Let $(z, t)\in {\rm Con}_h(Z)$.
Assume that $\sigma^{-1}(\sigma(z, t)) = (z_0, s)$. Then by \eqref{si-in-x} again,
$$
\sigma(z_0, s)=\sigma\circ \sigma^{-1}(\sigma(z, t))=\sigma(z, t).
$$
Moreover, by \eqref{sigma-h}, we know that
$$
s=t.
$$
Therefore, \eqref{y-y0-t} is true. In addition, if $v=\sigma(z, t)$  is a vertex of $X_Z$, since $\sigma(z, t)=\sigma(z_0, t)$, $\sigma(R_{z})=\gamma_{z}$ and $\sigma(R_{z_0})=\gamma_{z_0}$, it follows that $v\in \gamma_{z}\cap \gamma_{z_0}$. Moreover, by \eqref{sigma-h}, we have $t=\alpha_Z^{-h(v)}$.
As $\gamma_z$ $($ resp. $\gamma_{z_0}$$)$ is a vertical geodesic anchored at $z$ $($ resp. $z_0$$)$,   we have
	$$ z \in B_Z\left(\pi(v), \frac{1}{3}\tau_Z \alpha_Z^{-h(v)}\right) \ \ \text{ and } \ \ z_0\in B_Z\left(\pi(v),  \frac{1}{3}\tau_Z \alpha_Z^{-h(v)}\right).$$
Consequently,
$$
d_Z(z, z_0)\leq \frac{2}{3}\tau_Z \alpha_Z^{-h(v)}=\frac{2}{3}\tau_Z  t.
$$
Hence the statement $(ii)$ is true, and the proposition is proved.
\end{proof}

{\it In the rest of this section, we make the following assumptions. Let $(Z, d_Z)$ $($resp. $(W, d_W)$$)$ be a complete metric space. Let $X_Z$ $($resp. $X_W$$)$ be a hyperbolic filling of $Z$ $($resp. $W$$)$ with parameters $\alpha_Z>1$ and  $\tau_Z>\max\Big\{3, \frac{\alpha_Z}{\alpha_Z-1}\Big\}$ $\big($resp. $\alpha_W>1$ and  $\tau_W>\max\Big\{3, \frac{\alpha_W}{\alpha_W-1}\Big\}$$\big)$. Let  $V_Z$ $($resp. $V_W$$)$ denote the vertex set of $X_Z$ $($resp.  $X_W$$)$.
Let $\omega$ $($resp. $\omega^\prime$$)$ denote the point in $\partial_G X_Z$ $($resp. $\partial_G X_W$$)$ such that all anchored descending geodesic rays in $X_Z$ $($resp. $X_W$$)$ belong to it.
Let $\partial_\omega X_Z$ $($resp. $\partial_{\omega^\prime} X_W$$)$ be the Gromov boundary of $X_Z$ $($resp. $X_W$$)$ relative to $\omega$  $($resp. $\omega^\prime$$)$. Let $f: (Z, d_Z)\rightarrow (W, d_W)$ be a $(\theta, \lambda)$-power quasi-symmetric mapping with $\theta\geq 1$ and $\lambda\geq 1$.}




\medskip

Now, we are ready to give the rough quasi-isometric extension of  $f$.
\begin{thm}\label{thm-emb}
	There exists a mapping
$$F_e: X_Z\rightarrow X_W$$
such that it is an $(L_1, L_2, \Lambda)$-rough quasi-isometric mapping, where
$
L_1=\frac{\log\alpha_Z}{\theta\log\alpha_W},$ $L_2=\frac{\theta \log\alpha_Z}{\log\alpha_W}$ and $\Lambda=\Lambda(\alpha_Z, \tau_Z, \alpha_W, \tau_W, \theta, \lambda).
$
\end{thm}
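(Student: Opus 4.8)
The plan is to use the infinite hyperbolic cones ${\rm Con}_h(Z)$ and ${\rm Con}_h(W)$ as a bridge, and to define the extension as the composition
$$F_e = \sigma_W \circ \widehat{f} \circ \sigma^{-1},$$
where $\sigma^{-1}: X_Z \to {\rm Con}_h(Z)$ is the rough inverse constructed in Proposition~\ref{inverse-sigma}, $\widehat{f}: {\rm Con}_h(Z) \to {\rm Con}_h(W)$ is the rough quasi-isometric lift of $f$ provided by Theorem~\ref{thm-cone}, and $\sigma_W: {\rm Con}_h(W) \to X_W$ is the rough similarity of Proposition~\ref{thm-similar} applied to $W$. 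The reason for routing through the cones is that the cone lift $\widehat{f}$ retains the exponent $\theta$ of $f$ as its coefficient, while the two rough similarities contribute only the scaling factors $\log\alpha_Z$ and $1/\log\alpha_W$; together these should produce exactly the asymmetric coefficients $L_1$ and $L_2$.

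Concretely, I would first rewrite each of the three constituent maps in the $(\alpha_1, \alpha_2, k)$-form of Section~\ref{sec-2}. By Proposition~\ref{inverse-sigma}$(i)$, $\sigma^{-1}$ is an $(\log\alpha_Z, \log\alpha_Z, C_{\sigma^{-1}})$-rough quasi-isometric mapping; by Theorem~\ref{thm-cone}, $\widehat{f}$ is a $(\theta, k)$-rough quasi-isometric mapping, that is, a $(\theta^{-1}, \theta, k)$-rough quasi-isometric mapping; and by Proposition~\ref{thm-similar} applied to $W$, $\sigma_W$ is a $(1/\log\alpha_W, 1/\log\alpha_W, C_{\sigma_W})$-rough quasi-isometric mapping. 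Here $C_{\sigma^{-1}} = C_{\sigma^{-1}}(\alpha_Z, \tau_Z)$, $k = k(\theta, \lambda)$, and $C_{\sigma_W} = C_{\sigma_W}(\alpha_W, \tau_W)$ depend only on the admissible parameters.

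Next I would apply Lemma~\ref{lem-composition} twice. Composing $\widehat{f}$ with $\sigma^{-1}$ shows that $\widehat{f} \circ \sigma^{-1}: X_Z \to {\rm Con}_h(W)$ is a $\left(\frac{\log\alpha_Z}{\theta},\, \theta\log\alpha_Z,\, k'\right)$-rough quasi-isometric mapping; composing the result with $\sigma_W$ then gives that $F_e$ is a $\left(\frac{\log\alpha_Z}{\theta\log\alpha_W},\, \frac{\theta\log\alpha_Z}{\log\alpha_W},\, \Lambda\right)$-rough quasi-isometric mapping. The multiplicative coefficients thus collapse precisely to $L_1$ and $L_2$. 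It then remains to track the additive constant: Lemma~\ref{lem-composition} outputs each new $k$ as an explicit function of the previous multiplicative and additive constants, so $k'$ depends only on $\alpha_Z, \tau_Z, \theta, \lambda$, and hence $\Lambda$ depends only on $\alpha_Z, \tau_Z, \alpha_W, \tau_W, \theta, \lambda$, exactly as required.

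The argument is essentially bookkeeping once the bridge is in place, so I do not anticipate a serious analytic obstacle; the genuine content has already been absorbed into Theorem~\ref{thm-cone} (that $f$ lifts to the cones with coefficient equal to its exponent) and into Proposition~\ref{inverse-sigma} (the existence of a rough inverse $\sigma^{-1}$ sending $X_Z$ back into the cone, which is what makes the composition well defined in the correct direction). The one point requiring genuine care is orientation: one must use the rough inverse $\sigma^{-1}$, rather than $\sigma$ itself, so that the source is $X_Z$ and the chain $X_Z \to {\rm Con}_h(Z) \to {\rm Con}_h(W) \to X_W$ composes correctly, and one must feed the $(\alpha_1, \alpha_2, k)$-data into Lemma~\ref{lem-composition} in the right order so that the asymmetry between $L_1$ and $L_2$, inherited from the asymmetry between $\theta^{-1}$ and $\theta$ in $\widehat{f}$, emerges on the correct sides.
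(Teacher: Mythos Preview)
Your proposal is correct and follows essentially the same approach as the paper: define $F_e = \sigma'\circ\widehat{f}\circ\sigma^{-1}$ via the cone bridge and apply Lemma~\ref{lem-composition} to the three factors, obtaining exactly $L_1=\frac{\log\alpha_Z}{\theta\log\alpha_W}$, $L_2=\frac{\theta\log\alpha_Z}{\log\alpha_W}$ and an additive constant depending only on the listed parameters. The paper's proof is this same composition argument, with $\sigma'$ playing the role of your $\sigma_W$.
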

\begin{proof}
By Proposition~\ref{thm-similar}, we see that there exists a mapping $\sigma: \text{Con}_h(Z)\to X_Z$ (resp. $\sigma^{\prime}: {\rm Con}_h(W) \to X_W$) which is a $(1/\log\alpha_Z, C_\sigma)$-rough similarity with $C_\sigma=C_\sigma(\alpha_Z, \tau_Z)$ (resp. a $(1/\log\alpha_W, C_{\sigma^\prime})$-rough similarity with $C_{\sigma^\prime}=C_{\sigma^\prime}(\alpha_W, \tau_W)$) and satisfies that for any $(\xi, s)\in \text{Con}_h(Z)$ (resp. for any $\big(\xi^\prime, s^\prime)\in \text{Con}_h(W)$\big),
$$
		\sigma(\xi, s)=\gamma_{\xi}\left(-\frac{\log s}{\log\alpha_Z}\right)\;\; \mbox{\Big(resp.}\;\; \sigma^\prime(\xi^\prime, s^\prime)=\gamma_{\xi^\prime}\left(-\frac{\log s^\prime}{\log\alpha_W}\right)\Big).
$$

 Also, we know from Proposition~\ref{inverse-sigma} that $\sigma$ has a rough inverse
	$$\sigma^{-1}: X_Z\rightarrow \text{Con}_h(Z),$$
which is a $(\log\alpha_Z, C_{\sigma^{-1}})$-rough similarity, where $C_{\sigma^{-1}}=C_{\sigma^{-1}}(\alpha_Z, \tau_Z)$. Let
	\begin{equation}\label{expre-F}
	F_e=\sigma^\prime \circ \widehat{f} \circ \sigma^{-1},
	\end{equation}
where $\widehat{f}: {\rm Con}_h(Z)\to {\rm Con}_h(W)$ is the $(\theta, k)$-rough quasi-isometric mapping induced by the power quasi-symmetric mapping $f$ as in Theorem~\ref{thm-cone} with $k=k(\theta,\lambda)$.
Then we see from Lemma~\ref{lem-composition} that
	$
	F_e: X_Z\rightarrow X_W
	$
	 is an $(L_1, L_2, \Lambda)$-rough quasi-isometric mapping, where
$L_1=\frac{\log\alpha_Z}{\theta\log\alpha_W},$ $L_2=\frac{\theta\log\alpha_Z}{\log\alpha_W}$ and $\Lambda=\frac{\theta(C_{\sigma^{-1}}+1)+2(k+1)}{\log\alpha_W}+2C_{\sigma^\prime}+1.$
This proves the theorem.
\end{proof}

As a corollary, we obtain an expression for the height of $F_e(x)$, where $x\in \gamma_z$ and $z\in Z$.

\begin{cor}\label{high-F}
 Suppose that $z\in Z$ and $x\in \gamma_z$. Then
$$
h(F_e(x))=\frac{\log2}{\log\alpha_W}\Phi_{z_0}\left(h(x)\log_2 \alpha_Z\right),
$$
where $z_0\in Z$ is the point satisfying
$$\sigma^{-1}(x)=\left(z_0, \alpha_Z^{-h(x)}\right).$$
\end{cor}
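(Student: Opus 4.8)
The plan is to simply unwind the definition $F_e = \sigma^\prime \circ \widehat{f} \circ \sigma^{-1}$ from \eqref{expre-F} and track how the second (``scale'') coordinate of a point in the cone is transformed at each stage, since the height of the image in $X_W$ is determined by this coordinate through the $W$-analogue of \eqref{sigma-h}.

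First I would pin down $\sigma^{-1}(x)$. Because $x\in\gamma_z$, equation \eqref{25-1-29-1} gives $x=\gamma_z(h(x))$, and comparing this with the definition \eqref{f-con} of $\sigma$ says exactly that $x=\sigma\big(z,\alpha_Z^{-h(x)}\big)$. Feeding this into Proposition~\ref{inverse-sigma}$(ii)$ with $t=\alpha_Z^{-h(x)}$ shows that the scale coordinate of $\sigma^{-1}(x)=\sigma^{-1}\big(\sigma(z,t)\big)$ equals $t=\alpha_Z^{-h(x)}$; writing the first coordinate as $z_0$ yields $\sigma^{-1}(x)=\big(z_0,\alpha_Z^{-h(x)}\big)$, which is the point named in the statement.

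Next I would apply $\widehat{f}$ using its explicit formula \eqref{eq-def-fx}. With $t=\alpha_Z^{-h(x)}$ we have $-\log_2 t=h(x)\log_2\alpha_Z$, so $\widehat{f}\big(z_0,\alpha_Z^{-h(x)}\big)=\big(f(z_0),\,2^{-\Phi_{z_0}(h(x)\log_2\alpha_Z)}\big)$. Finally, applying $\sigma^\prime$ and reading off the height via $h(\sigma^\prime(\xi^\prime,s^\prime))=-\log s^\prime/\log\alpha_W$ with $s^\prime=2^{-\Phi_{z_0}(h(x)\log_2\alpha_Z)}$ gives $h(F_e(x))=-\log s^\prime/\log\alpha_W=(\log 2/\log\alpha_W)\,\Phi_{z_0}(h(x)\log_2\alpha_Z)$, as claimed.

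The computation itself is routine; the only point requiring care is the identification of the scale coordinate of $\sigma^{-1}(x)$. The subtlety is that $\sigma^{-1}$ is merely a rough inverse, so a priori $\sigma^{-1}(x)$ need not return to the fibre over $x$ with the correct scale, and one might only expect the height formula to hold up to a bounded error. This is precisely what Proposition~\ref{inverse-sigma}$(ii)$ rules out: it forces $\sigma(z_0,s)=\sigma(z,t)$ together with the \emph{exact} equality $s=t$, which is what makes the resulting height formula an exact identity rather than an approximate one.
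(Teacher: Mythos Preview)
Your proof is correct and follows essentially the same approach as the paper's own proof: both identify $\sigma^{-1}(x)=(z_0,\alpha_Z^{-h(x)})$ via \eqref{25-1-29-1}, \eqref{f-con} and Proposition~\ref{inverse-sigma}$(ii)$, then unwind $F_e=\sigma'\circ\widehat{f}\circ\sigma^{-1}$ using \eqref{eq-def-fx} and the $W$-analogue of \eqref{sigma-h}. Your closing remark on why Proposition~\ref{inverse-sigma}$(ii)$ (the exact equality $s=t$) is needed to make the height formula an identity rather than an approximation is a nice clarification.
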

\begin{proof}
Let $x\in \gamma_z$. Then $x=\gamma_z(h(x))$ by \eqref{25-1-29-1}. Thus it follows from \eqref{f-con} that
$$
\sigma\left(z, \alpha_Z^{-h(x)}\right)=x.
$$
By Proposition~\ref{inverse-sigma}$(ii)$,  there exists a point $z_0 \in Z$ such that
$$
\sigma^{-1}(x)=\sigma^{-1}\circ\sigma\left(z, \alpha_Z^{-h(x)}\right)=\left(z_0, \alpha_Z^{-h(x)}\right).
$$

By the expression \eqref{expre-F}, together with \eqref{eq-def-fx} and \eqref{f-con}, we get
$$
F_e(x)=\gamma_{f(z_0)}\left(\frac{\log2}{\log\alpha_W}\Phi_{z_0}\left(h(x)\log_2 \alpha_Z\right)\right),
$$ from which the corollary follows.
\end{proof}



The following is another result concerning the mapping $F_e$ constructed in Theorem~\ref{thm-emb}. It  demonstrates that for any $z\in Z$, the image $F_e(\gamma_z)$ of a vertical geodesic $\gamma_z$ lies within a neighborhood of the vertical geodesic $\gamma_{f(z)}$. This result will play a crucial role in   showing that the boundary mapping of $F_e$ coincides with $f$ (see Theorem~\ref{prop-boundary}).

\begin{prop}\label{prop-close}
There exists a constant $\mu$ with $\mu=\mu(\alpha_Z, \tau_Z, \alpha_W, \tau_W, \theta, \lambda)$ such that for any $z\in Z$ and for any $x\in \gamma_z$,
	\begin{equation*}
		\dist(\{F_e(x)\}, \gamma_{f(z)})\leq \mu.
	\end{equation*}
\end{prop}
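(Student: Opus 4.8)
The plan is to exploit the factorization $F_e=\sigma'\circ\widehat f\circ\sigma^{-1}$ from Theorem~\ref{thm-emb} and to transport the whole estimate to the infinite hyperbolic cones, where the geometry of vertical geodesics reduces to the elementary geometry of the rays $R_z$. Fix $z\in Z$ and $x\in\gamma_z$, and set $t=\alpha_Z^{-h(x)}$. By \eqref{f-con} we have $\sigma(z,t)=\gamma_z(h(x))=x$, and by Proposition~\ref{inverse-sigma}$(ii)$ there is a point $z_0\in Z$ with $\sigma^{-1}(x)=(z_0,t)$ and $\sigma(z_0,t)=\sigma(z,t)=x$; this is exactly the data already extracted in the proof of Corollary~\ref{high-F}. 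Consequently $F_e(x)=\sigma'(\widehat f(z_0,t))$, and the natural comparison point on the target geodesic will be $\sigma'(\widehat f(z,t))$.

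The crucial observation is that the two cone points $(z_0,t)$ and $(z,t)$ have the \emph{same} image $x$ under the rough similarity $\sigma$. Since $\sigma$ is a $(1/\log\alpha_Z,C_\sigma)$-rough similarity (Proposition~\ref{thm-similar}), the lower bound in the rough-similarity inequality applied to this pair gives $\tfrac{1}{\log\alpha_Z}\rho_h((z_0,t),(z,t))-C_\sigma\le|\sigma(z_0,t)-\sigma(z,t)|=0$, hence $\rho_h((z_0,t),(z,t))\le C_\sigma\log\alpha_Z$. I stress that this bounded-cone-distance step uses \emph{only} the coincidence of images, not the sharper vertex estimate $d_Z(z,z_0)\le\tfrac23\tau_Z t$ of Proposition~\ref{inverse-sigma}$(ii)$; this is precisely what lets the argument run for every $x\in\gamma_z$ rather than merely for vertices.

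Next I would push this bound forward through the two remaining maps. Since $\widehat f\colon{\rm Con}_h(Z)\to{\rm Con}_h(W)$ is a $(\theta,k)$-rough quasi-isometric mapping (Theorem~\ref{thm-cone}), we obtain $\rho_h(\widehat f(z_0,t),\widehat f(z,t))\le\theta\,\rho_h((z_0,t),(z,t))+k\le\theta C_\sigma\log\alpha_Z+k$. Applying the $(1/\log\alpha_W,C_{\sigma'})$-rough similarity $\sigma'$ then yields
\[
|F_e(x)-\sigma'(\widehat f(z,t))|\le\frac{1}{\log\alpha_W}\rho_h(\widehat f(z_0,t),\widehat f(z,t))+C_{\sigma'}\le\frac{\theta C_\sigma\log\alpha_Z+k}{\log\alpha_W}+C_{\sigma'}=:\mu,
\]
a constant depending only on $\alpha_Z,\tau_Z,\alpha_W,\tau_W,\theta,\lambda$ via $C_\sigma=C_\sigma(\alpha_Z,\tau_Z)$, $C_{\sigma'}=C_{\sigma'}(\alpha_W,\tau_W)$, and $k=k(\theta,\lambda)$.

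Finally I would check that the comparison point already lies on the correct geodesic: by \eqref{eq-def-fx}, $\widehat f(z,t)=\big(f(z),2^{-\Phi_z(-\log_2 t)}\big)\in R_{f(z)}$, and $\sigma'$ maps the ray $R_{f(z)}$ onto the vertical geodesic $\gamma_{f(z)}$ by its construction, so $\sigma'(\widehat f(z,t))\in\gamma_{f(z)}$. Therefore $\dist(\{F_e(x)\},\gamma_{f(z)})\le|F_e(x)-\sigma'(\widehat f(z,t))|\le\mu$, which is the assertion. The only point demanding care is ensuring the bound survives at non-vertex points of $\gamma_z$; routing everything through the identity $\sigma(z_0,t)=\sigma(z,t)$ instead of through the vertex-only inequality of Proposition~\ref{inverse-sigma}$(ii)$ dissolves this difficulty, so no separate interpolation between consecutive vertices is needed.
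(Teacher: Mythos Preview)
Your proof is correct and takes a genuinely different, more economical route than the paper's. The paper first restricts to a vertex $v\in\gamma_z\cap V_Z$, invokes the vertex-only metric estimate $d_Z(z,z_0)\le\tfrac{2}{3}\tau_Z s$ from Proposition~\ref{inverse-sigma}$(ii)$, and then proves two claims: one bounding $|\Phi_z(-\log_2 s)-\Phi_{z_0}(-\log_2 s)|$ via Theorem~\ref{lem-cone-2}, and one bounding $\rho_h(\widehat f(z,s),\widehat f(z_0,s))$ by an explicit computation with the cone-metric formula and the power quasi-symmetry of $f$; only afterwards does it handle a general $x\in\gamma_z$ by moving to a nearby vertex and paying the rough quasi-isometric distortion of $F_e$. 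Your key observation---that the coincidence $\sigma(z_0,t)=\sigma(z,t)$ together with the lower rough-similarity inequality for $\sigma$ already forces $\rho_h((z_0,t),(z,t))\le C_\sigma\log\alpha_Z$---bypasses all of this: you then just push this bound through the $(\theta,k)$-rough quasi-isometric map $\widehat f$ and the rough similarity $\sigma'$, treating them as black boxes. What your approach buys is uniformity over all $x\in\gamma_z$ without a vertex/non-vertex case split, and no appeal to the internal structure of $\widehat f$ (the functions $\Phi_z$, Theorem~\ref{lem-cone-2}, or the explicit cone metric). What the paper's approach buys is an explicit expression for the constant in terms of the $\Phi$-data, which may be of independent interest but is not needed for the proposition as stated.
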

\begin{proof}
	Let $z\in Z$, and let $v\in \gamma_z\cap V_Z$. Since $\sigma$ maps $R_{z}$ onto $\gamma_{z}$ and since \eqref{25-1-29-1} implies $v=\gamma_z(h(v))$, we know from \eqref{sigma-h} that
$
\sigma(z, s)=v,
$
where
$
s=\alpha_Z^{-h(v)}.
$
By Proposition~\ref{inverse-sigma}$(ii)$, there exists $(z_0, s)\in {\rm Con}_h(Z)$ such that
\begin{equation}\label{eq-10-26}
\sigma(z_0, s)=v\in \gamma_{z_0} \ \ \text{ and } \ \ \sigma^{-1}(v)=(z_0, s).
\end{equation}	
Moreover,  as $v$ is a vertex, we also have
\begin{equation}\label{s-tauz}
d_Z(z, z_0)\leq \frac{2}{3}\tau_Z\alpha_Z^{-h(v)}=\frac{2}{3}\tau_Zs.
\end{equation}

\begin{claim}\label{25-6-3-1}
There is  a constant $C_1$ such that
	$$
		\left|\Phi_{z}\left(-\log_2 s\right)-\Phi_{z_0}\left(-\log_2 s\right)\right|\leq C_1,
$$
	where $C_1=C_1(\theta, \lambda, \tau_Z)$, and $\Phi_{z}$ and $\Phi_{z_0}$ are the functions defined in Subsection~\ref{sec-3-3}, which are $(\theta, \mu_0)$-rough quasi-isometric with $\mu_0=\mu_0(\theta, \lambda)$.
\end{claim}
	
	For the proof, by using the $(\theta, \mu_0)$-rough quasi-isometric properties of $\Phi_{z}$ and $\Phi_{z_0}$, we have
	\begin{equation}\label{10-9-1}
		\left|\Phi_{z}\left(-\log_2 s\right)-\Phi_{z}\left(\log_2\frac{3}{2\tau_Z s}\right)\right|\leq \theta\log_2\frac{2\tau_Z}{3}+\mu_0
	\end{equation}
	and
	\begin{equation*}
		\left|\Phi_{z_0}\left(-\log_2 s\right)-\Phi_{z_0}\left(\log_2\frac{3}{2\tau_Z s}\right)\right|\leq \theta\log_2\frac{2\tau_Z}{3}+\mu_0.
	\end{equation*}	
Moreover, Theorem~\ref{lem-cone-2} and \eqref{s-tauz} ensure that
\begin{equation*}
\left|\Phi_{z}\left(\log_2\frac{3}{2\tau_Z s}\right)-\Phi_{z_0}\left(\log_2\frac{3}{2\tau_Z s}\right)\right|\leq \mu_1,
\end{equation*} where $\mu_1=\mu_1(\theta, \lambda)$ is from Theorem~\ref{lem-cone-2}. Therefore, we obtain from the triangle inequality that
\begin{align*}
\left|\Phi_{z}\left(-\log_2 s\right)-\Phi_{z_0}\left(-\log_2 s\right)\right|
\leq C_1,
	\end{align*} where $C_1= 2\theta\log_2\frac{2\tau_Z}{3}+2\mu_0+\mu_1$.
This is what we need.

\begin{claim}\label{25-6-3-2}
There is a constant $C_2$ such that
	$$
		{\rho_h}(\widehat{f}(z, s), \widehat{f}(z_0, s))\leq C_2,
$$ where $C_2=C_2(\theta, \lambda, \tau_Z)$.
\end{claim}
	
For the proof, let $z^\prime=f(z)$ and $z_0^\prime=f(z_0)$.
	Since $f$ is a $(\theta, \lambda)$-power quasi-symmetric mapping, by \cite[Lemma 4.5]{HXu}, we know that there is a constant $C^{\prime}=C^{\prime}(\theta, \lambda)$ such that
	\begin{equation*}
		\left|\log_2\frac{1}{d_{W}(z^\prime, z_0^\prime)}-\Phi_{z}\left(\log_2\frac{1}{d_Z(z, z_0)}\right)\right|\leq C^{\prime}.
	\end{equation*}
It follows from \eqref{s-tauz} and \eqref{10-9-1} that
$$
d_{W}(z^\prime, z_0^\prime)\leq  2^{C^{\prime}-\Phi_{z}\left(-\log_2 d_Z(z, z_0) \right)}\leq 2^{C^{\prime}-\Phi_{z}\left(\log_2\frac{3}{2\tau_Zs}\right)}\leq 2^{C^{\prime}+\theta\log_2\frac{2\tau_Z}{3}+\mu_0 -\Phi_{z}\left(-\log_2 s\right)}.
$$
Then we get
\begin{equation*}
 d_{W}(z^\prime, z_0^\prime)+2^{-\Phi_{z}\left(-\log_2 s\right)}\vee2^{-\Phi_{z_0}\left(-\log_2 s\right)}
\leq 2^{C^{\prime}+\theta\log_2\frac{2\tau_Z}{3}+\mu_0+1}\left(2^{-\Phi_{z}\left(-\log_2 s\right)}\vee2^{-\Phi_{z_0}\left(-\log_2 s\right)}\right).
\end{equation*}

Since \eqref{metric-cone} and \eqref{eq-def-fx}  give
\begin{equation*}
{\rho_h}(\widehat{f}(z, s), \widehat{f}(z_0, s))
	 = 2\log\frac{d_{W}(z^\prime, z_0^\prime)+2^{-\Phi_{z}\left(-\log_2 s\right)}\vee2^{-\Phi_{z_0}\left(-\log_2 s\right)}}{\sqrt{2^{-\Phi_{z}\left(-\log_2 s\right)-\Phi_{z_0}\left(-\log_2 s\right)}}},	
\end{equation*}
it follows that
	\begin{align*}
		{\rho_h}(\widehat{f}(z, s), \widehat{f}(z_0, s))
		& \leq 2\log\frac{2^{-\Phi_{z}\left(-\log_2 s\right)}\vee2^{-\Phi_{z_0}\left(-\log_2 s\right)}}{\sqrt{2^{-\Phi_{z}\left(-\log_2 s\right)-\Phi_{z_0}\left(-\log_2 s\right)}}}+ 2\log 2^{C^{\prime}+\theta\log_2\frac{2\tau_Z}{3}+\mu_0+1} \\
		& =  \log 2 \cdot \left|\Phi_{z}\left(-\log_2 s\right)-\Phi_{z_0}\left(-\log_2 s\right)\right|+2\Big(C^{\prime}+\theta\log_2\frac{2\tau_Z}{3}+\mu_0+1\Big)\log 2.
	\end{align*}
Combining with Claim~\ref{25-6-3-1}, we have
	\begin{equation*}
		{\rho_h}(\widehat{f}(z, s), \widehat{f}(z_0, s))
  \leq C_2,
	\end{equation*} where $C_2=(C_1+2C^{\prime}+2\theta\log_2\frac{2\tau_Z}{3}+2\mu_0+2)\log 2$.
	This shows that the claim is true.\medskip
	

Observe that $\sigma^\prime$ maps the ray $R_{f(z)}$ onto the vertical geodesic
	$\gamma_{f(z)}$ anchorded at $f(z)$.
	This implies that $\sigma^\prime(\widehat{f}(z, s))\in\gamma_{f(z)}$.
Since $F_e=\sigma^\prime \circ \widehat{f} \circ \sigma^{-1}$ (see the proof of Theorem~\ref{thm-emb}), we know from \eqref{eq-10-26} that
$$\dist(\{F_e(v)\}, \gamma_{f(z)})\leq |\sigma^\prime(\widehat{f}(z_0, s))- \sigma^\prime(\widehat{f}(z, s))|.$$

Note that $\sigma^\prime$ is a $(1/\log\alpha_W, C_{\sigma^\prime})$-rough similarity with $C_{\sigma^\prime}=C_{\sigma^\prime}(\alpha_W, \tau_W)$) (see Proposition~\ref{thm-similar}). Then we infer from  Claim~\ref{25-6-3-2}  that
	$$
		\dist(\{F_e(v)\}, \gamma_{f(z)})
		\leq \frac{1}{\log\alpha_W}{\rho_h}(\widehat{f}(z, s), \widehat{f}(z_0, s))+C_{\sigma^\prime}
		\leq \frac{C_2}{\log\alpha_W} +C_{\sigma^\prime}.
	$$

To finish the proof,  let $x\in \gamma_z$. Then there exists a vertex $v\in\gamma_z\cap V_Z$ such that $|x-v|\leq 1$, and thus, it follows from Theorem~\ref{thm-emb} that
	$$
\dist(\{F_e(x)\}, \gamma_{f(z)})  \leq \dist(\{F_e(v)\}, \gamma_{f(z)})+|F_e(x)-F_e(v)| \leq \mu,
$$ where $\mu=\frac{C_2}{\log\alpha_W} +C_{\sigma^\prime}+L_2+\Lambda$.
\end{proof}

In the following, we show that the boundary behavior of the mapping $F_e$ constructed in Theorem~\ref{thm-emb} coincides with $f$.
Recall that $X_Z$ and $X_W$ are Gromov hyperbolic geodesic spaces.
By \cite[Proposition 6.3]{BSC}, we know that every rough quasi-isometric mapping $\Psi: X_Z\rightarrow X_W$ induces a boundary mapping
$$\partial_\infty\Psi: \partial _G X_Z\rightarrow \partial _G X_W,$$
which is defined as follows. If $\{x_n\}\subset X_Z$ converges to $x\in\partial _G X_Z$, then $\{\Psi(x_n)\}\subset X_W$ converges to infinity. Let $y$ be the equivalence class of $\{\Psi(x_n)\}$, and then, define $\partial_\infty\Psi(x)=y.$ We refer to \cite[Section 6]{BSC} for more discussions about $\partial_\infty \Psi$.
Moreover, combining with Theorem~\ref{prop-Gromov} and Theorem~\ref{prop-identi}, we see that $\partial_\infty\Psi$ is well-defined on $Z$ via the canonical identification.


\begin{thm}\label{prop-boundary}
Suppose that $F_e$ is the rough quasi-isometric extension of the quasi-symmetric mapping $f$ constructed in Theorem~\ref{thm-emb}. Then $\partial_\infty F_e= f$ on $Z$.
\end{thm}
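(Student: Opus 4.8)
The plan is to establish $\partial_\infty F_e = f$ pointwise by evaluating the induced boundary mapping along a convenient representative sequence. Fix $z\in Z$. By \eqref{eq-6-24-1} the ascending sequence $\{\gamma_z(n)\}_{n\in\mathbb N}$ converges to infinity with respect to $\omega$, so by Theorem~\ref{prop-Gromov} it converges to the point $z\in\partial_\omega X_Z=\partial_G X_Z\setminus\{\omega\}$. Since $F_e$ is a rough quasi-isometric mapping between Gromov hyperbolic geodesic spaces, it induces the boundary mapping $\partial_\infty F_e$, and by the definition of this mapping it is enough to prove that $\{F_e(\gamma_z(n))\}_{n\in\mathbb N}$ converges to $f(z)\in\partial_{\omega^\prime}X_W$ in $\partial_G X_W$.

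First I would trap the image sequence near the vertical geodesic over $f(z)$. By Proposition~\ref{prop-close}, for every $n$ there is a point $y_n\in\gamma_{f(z)}$ with $|F_e(\gamma_z(n))-y_n|\leq\mu$. Writing $y_n=\gamma_{f(z)}(t_n)$ with $t_n=h(y_n)$, the $1$-Lipschitz property of the height function gives $|t_n-h(F_e(\gamma_z(n)))|\leq\mu$. The decisive point is to show $t_n\to+\infty$, since this is exactly what separates convergence to $f(z)$ from convergence to the distinguished point $\omega^\prime$. For this I would invoke Corollary~\ref{high-F}: for integer $n$ the point $\gamma_z(n)$ is a vertex of $X_Z$, and if $\sigma^{-1}(\gamma_z(n))=(z_n,\alpha_Z^{-n})$ then
$$
h(F_e(\gamma_z(n)))=\frac{\log2}{\log\alpha_W}\,\Phi_{z_n}\!\left(n\log_2\alpha_Z\right).
$$
By Proposition~\ref{inverse-sigma}$(ii)$ we have $d_Z(z,z_n)\leq\frac23\tau_Z\alpha_Z^{-n}$, and the comparison established in the course of proving Proposition~\ref{prop-close} (which rests on Theorem~\ref{lem-cone-2}) yields a constant $C_1=C_1(\theta,\lambda,\tau_Z)$, independent of $n$, with $|\Phi_z(n\log_2\alpha_Z)-\Phi_{z_n}(n\log_2\alpha_Z)|\leq C_1$. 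Since $\Phi_z$ is a fixed function satisfying $\lim_{t\to+\infty}\Phi_z(t)=+\infty$ by \eqref{mon-01}, it follows that $h(F_e(\gamma_z(n)))\to+\infty$, and hence $t_n\to+\infty$.

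It remains to assemble the conclusion. As $t_n\to+\infty$, the sequence $\{y_n\}=\{\gamma_{f(z)}(t_n)\}$ escapes along the ascending part of $\gamma_{f(z)}$ and therefore converges to $f(z)$ in $\partial_G X_W$, by the analogue of \eqref{eq-6-24-1} for $W$ together with Theorem~\ref{prop-Gromov}. Since $\sup_n|F_e(\gamma_z(n))-y_n|\leq\mu<\infty$ and both sequences leave every bounded subset of $X_W$ (their heights tend to $+\infty$), a routine Gromov-product computation shows that $\{F_e(\gamma_z(n))\}$ and $\{y_n\}$ are equivalent, hence define the same point of $\partial_G X_W$. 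Therefore $\partial_\infty F_e(z)=f(z)$, and as $z\in Z$ was arbitrary we conclude $\partial_\infty F_e=f$ on $Z$. The single genuine obstacle is the sign determination $t_n\to+\infty$, which forces the image boundary point to be $f(z)$ rather than $\omega^\prime$; the remaining ingredients are the already-proved closeness in Proposition~\ref{prop-close} and the standard fact that sequences at bounded distance share the same Gromov-boundary limit.
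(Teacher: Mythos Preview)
Your proof is correct and follows essentially the same route as the paper's own argument: both proofs pick the representative sequence $\{\gamma_z(n)\}$, use Proposition~\ref{prop-close} to bring $F_e(\gamma_z(n))$ within bounded distance of some $\gamma_{f(z)}(t_n)$, and then combine Corollary~\ref{high-F}, Proposition~\ref{inverse-sigma}$(ii)$, the comparison $|\Phi_z-\Phi_{z_n}|\leq C_1$ from Claim~\ref{25-6-3-1}, and \eqref{mon-01} to force $t_n\to+\infty$. The only cosmetic differences are that the paper phrases convergence and equivalence ``with respect to $\omega^\prime$'' rather than invoking Theorem~\ref{prop-Gromov}, and it chooses $t_n\in\mathbb Z$ (giving the bound $\mu+1$ instead of $\mu$).
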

\begin{proof}

Let $z\in Z$. Then \eqref{eq-6-24-1} implies that $\{\gamma_z(n)\}_{n\in \mathbb N}$ converges to $z$ with respect to $\omega$. By \eqref{f-con}, we know that
for each $n\in\mathbb N$,
$\sigma(z, s_n)=\gamma_z(n)$, where $s_n=\alpha_Z^{-n}$. Furthermore, Proposition~\ref{inverse-sigma}$(ii)$ ensures that there exists $(z_n, s_n)\in {\rm Con}_h(Z)$ such that
\begin{equation}\label{25-02-6-1}
\sigma^{-1}(\gamma_z(n))=(z_n, s_n) \ \ \text{ and } \ \ \sigma(z_n, s_n)=\sigma(z, s_n).
\end{equation}

It follows from \eqref{25-02-6-1} and Claim~\ref{25-6-3-1} in the proof of Proposition~\ref{prop-close} that
	\begin{equation*}
		\left|\Phi_{z}\left(-\log_2 s_n \right)-\Phi_{z_n}\left(-\log_2 s_n \right)\right|\leq C_1,
	\end{equation*}
where $C_1=C_1(\theta, \lambda, \tau_Z)$. We know  from Corollary~\ref{high-F} that
\begin{equation*}
h(F_e(\gamma_z(n)))
= \frac{\log2}{\log\alpha_W}\Phi_{z_n}\left(-\log_2 s_n \right)
\geq \frac{\log2}{\log\alpha_W}\Phi_{z}\left(-\log_2 s_n \right)-\frac{ \log2}{\log\alpha_W}C_1.
\end{equation*}
Combining with \eqref{mon-01} and the fact that $s_n=\alpha_Z^{-n}$, we obtain
\begin{equation}\label{25-02-6-2}
h(F_e(\gamma_z(n)))\to+\infty \ \text{ as } \  n\to+\infty.
\end{equation}

 Moreover, by Proposition~\ref{prop-close}, there exists a sequence $\{t_n\}_{n\in \mathbb N}\subset \mathbb Z$ such that for each $n\in \mathbb N$,
\begin{equation}\label{mmuu}
|F_e(\gamma_z(n))-\gamma_{f(z)}(t_n)|\leq \mu+1.
\end{equation}
Therefore, \eqref{eq-6-24} and the fact that the height function $h$ is $1$-Lipschitz ensure that
$$t_n=h(\gamma_{f(z)}(t_n))\geq h(F_e(\gamma_z(n)))-(\mu+1).$$
Then \eqref{25-02-6-2} implies
 $$t_n\to +\infty \ \text{ as } \   n\to +\infty.$$
 Hence we know from \eqref{eq-6-24-1} that $\{\gamma_{f(z)}(t_n)\}_{n\in \mathbb N}$ converges to $f(z)$ with respect to $\omega^\prime$. Since \eqref{mmuu} guarantees that  $\{F_e(\gamma_z(n))\}_{n\in \mathbb N}$ and $\{\gamma_{f(z)}(t_n)\}_{n\in \mathbb N}$ are equivalent with respect to $\omega^\prime$, we see that $\{F_e(\gamma_z(n))\}_{n\in \mathbb N}$ also converges to $f(z)$ with respect to $\omega^\prime$. This implies that
 $\partial_\infty F_e(z)= f(z)$. By the arbitrariness of $z$ in $Z$, we see that
 $\partial_\infty F_e=f$ on $Z$.
\end{proof}

In general, the rough quasi-isometric extension $F_e$ constructed in Theorem~\ref{thm-emb} may not map vertices of $X_Z$ to those of $X_W$.
However, the extension $F_e$ can be modified to satisfy the vertex-to-vertex property.

\begin{thm}\label{prop-trun-X}
Suppose that $F_e: X_Z\rightarrow X_W$ is the $(L_1, L_2, \Lambda)$ rough quasi-isometric extension constructed in Theorem~\ref{thm-emb}. Then there exists an $(L_1, L_2, \Lambda^\prime)$-rough quasi-isometric mapping $F: X_Z \to X_W$
with  $\Lambda^\prime=4L_2+5\Lambda+6$ such that
\ben
\item[$(1)$]
$F$ maps the vertex set $V_Z$ into the one $V_W$;

\item[$(2)$]
 for any $x\in X_Z$,
\begin{equation*}
|F_e(x)-F(x)|\leq \Theta,
\end{equation*} where $\Theta=2(L_2+\Lambda)+3$;

\item[$(3)$]
$\partial_\infty F= f$ on $Z$.
\een
\end{thm}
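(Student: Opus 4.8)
The idea is to \emph{snap} $F_e$ onto the vertex set of $X_W$ at vertices and to extend the result over edges by geodesics, producing a map $F$ that stays within a uniform distance $\Theta$ of $F_e$; all three assertions will then follow from this closeness together with Theorem~\ref{prop-boundary}. Concretely, since $X_W$ is a metric graph with unit edges, its vertex set $V_W$ is $\tfrac12$-dense, so for each $v\in V_Z$ I would choose a vertex $F(v)\in V_W$ with $|F(v)-F_e(v)|\le 1$; this gives $(1)$ immediately. I would then extend $F$ to all of $X_Z$ by sending each edge $[v,v']$ of $X_Z$ affinely onto a geodesic segment $[F(v),F(v')]$ in $X_W$, which defines $F$ on the whole graph.

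For $(2)$, fix $x\in X_Z$ and let $v$ be the nearer endpoint of the edge containing $x$, so $|x-v|\le 1$. The upper rough-quasi-isometry bound for $F_e$ gives $|F_e(v)-F_e(x)|\le L_2+\Lambda$ and $|F_e(v)-F_e(v')|\le L_2+\Lambda$, whence $|F(v)-F(v')|\le L_2+\Lambda+2$ using $|F(v)-F_e(v)|\le 1$ at both endpoints. Since $F(x)$ lies on $[F(v),F(v')]$, we get $|F(x)-F(v)|\le L_2+\Lambda+2$, and combining these through the triangle inequality yields $|F_e(x)-F(x)|\le |F(x)-F(v)|+|F(v)-F_e(v)|+|F_e(v)-F_e(x)|\le 2(L_2+\Lambda)+3=\Theta$, which is exactly where the factor $2(L_2+\Lambda)$ is forced. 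In particular $F\simeq F_e$. The rough-quasi-isometry of $F$ is then a routine perturbation argument: because $F$ is $\Theta$-close to the $(L_1,L_2,\Lambda)$-rough quasi-isometric map $F_e$, applying the triangle inequality in both arguments transfers the two-sided estimate to $F$ with additive constant $\Lambda+2\Theta=4L_2+5\Lambda+6=\Lambda'$, and the coboundedness of $F(X_Z)$ follows from that of $F_e(X_Z)$ since the two images lie within $\Theta$ of one another.

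For $(3)$, I would invoke \cite[Proposition 6.3]{BSC} to get that the (now established) rough quasi-isometry $F$ induces a boundary map $\partial_\infty F$, and then argue that $F\simeq F_e$ forces $\partial_\infty F=\partial_\infty F_e$. Indeed, for $z\in Z$ and a sequence $\{x_n\}$ converging to $z\in\partial_\omega X_Z$, the Gromov product in $X_W$ satisfies $(F(x_n)\,|\,F_e(x_n))_o\ge \tfrac12\big(|F(x_n)-o|+|F_e(x_n)-o|-\Theta\big)$, which tends to $\infty$ because $F_e$, and hence $F$, maps sequences converging to infinity to sequences converging to infinity. Thus $\{F(x_n)\}$ and $\{F_e(x_n)\}$ lie in the same equivalence class of $\partial_G X_W$, so $\partial_\infty F(z)=\partial_\infty F_e(z)$, and the latter equals $f(z)$ by Theorem~\ref{prop-boundary}; by the arbitrariness of $z$, $\partial_\infty F=f$ on $Z$.

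I expect the main obstacle to be the bookkeeping in $(2)$: one must balance the freedom in the vertex-snapping against the edge-extension so that $F(V_Z)\subset V_W$ holds exactly while the perturbation stays controlled purely in terms of $L_2$ and $\Lambda$ (and not the geometric constant $\mu$ from Proposition~\ref{prop-close}). The conceptual crux is the invariance of the boundary map under rough equivalence in $(3)$; although it is the standard fact that uniformly close maps induce the same Gromov-boundary extension, it should be justified via the Gromov-product estimate above so as to fit the relative-boundary framework of Section~\ref{sec-3-15}.
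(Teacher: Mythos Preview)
Your proposal is correct and matches the paper's proof essentially step for step: the same vertex-snapping construction, the same affine edge extension, the same chain of inequalities yielding $\Theta=2(L_2+\Lambda)+3$ and $\Lambda'=\Lambda+2\Theta$, and the same appeal to Theorem~\ref{prop-boundary} via rough equivalence for part~$(3)$. The only cosmetic difference is that the paper phrases the equivalence of $\{F(x_n)\}$ and $\{F_e(x_n)\}$ ``with respect to $\omega'$'' rather than via the ordinary Gromov product, but these are identified by Theorem~\ref{prop-Gromov}.
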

 \begin{proof}
We construct the required mapping $F$ in two steps.
In the first step, we construct a mapping from $V_Z$ to $V_W$ as follows. For $v\in V_Z$,  let
\begin{equation}\label{vertex-M}
V_*(v)=\{v_*:\; v_*\;\mbox{is a vertex in}\;V_W \; \mbox{closest to $F_e(v)$ with respect to the graph metric}\}.
\end{equation}

 Obviously, $V_*(v)\not=\emptyset$ for any $v\in V_Z$. For each $v\in V_Z$, we fix an element $v_*$ in $V_*(v)$, and then, define
$F(v)=v_*$. Clearly, this defines a mapping from $V_Z$ to $V_W$.

In the second step, we extend the above $F: V_Z \rightarrow V_W$ to a mapping from $X_Z$ to $X_W$, which is still denoted by $F$. For this, let $[v_1, v_2]$ denote an edge in $X_Z$, and let $[F(v_1), F(v_2)]$ be a geodesic segment in $X_W$ connecting $F(v_1)$ and $F(v_2)$.
For $x\in[v_1, v_2]$, there must exist $s\in[0, 1]$ such that
$$
|v_1-x|=s|v_1-v_2|,
$$
where $[v_1, x]\subset [v_1, v_2]$.
We define $F(x)$ to be the point in $[F(v_1), F(v_2)]$ such that
$$
|F(v_1)-F(x)|=s|F(v_1) - F(v_2)|,
$$
where $[F(v_1), F(x)]\subset [F(v_1), F(v_2)]$. In this way, we obtain a mapping $F$ from $X_Z$ to $X_W$.


Next, we show that $F$ satisfies the requirements in the theorem. It is clear that the statement $(1)$ of the theorem holds true. For the statement $(2)$, since for any $v\in V_Z$, by \eqref{vertex-M},
\begin{equation}\label{eq-star}
|F(v)-F_e(v)|\leq 1,
\end{equation}
we see that for any $v, w\in V_Z$,
\begin{equation}\label{rough-Fstar}
|F_e(v)-F_e(w)|-2\leq |F(v)-F(w)|\leq |F_e(v)-F_e(w)|+2.
\end{equation}

For any $x\in X_Z$, there exists an edge $[v, w]$ in $X_Z$ such that $x\in[v, w]$.
Since $F_e: X_{Z} \to X_{W}$ is an $(L_1, L_2, \Lambda )$-rough quasi-isometric mapping, we see that
\begin{equation}\label{25-1-26-1}
|F_e(x)-F_e(v)|\leq L_2+\Lambda.
\end{equation}
Then it follows from \eqref{rough-Fstar} and the construction of $F$ that
\begin{equation}\label{25-1-26-2}
|F(x)-F(v)|\leq |F(v)-F(w)|\leq |F_e(v)-F_e(w)|+2\leq  L_2+\Lambda +2.
\end{equation}

Since by the triangle inequality,
$$|F(x)-F_{e}(x)|\leq |F_e(x)-F_e(v)|+ |F_e(v)-F(v)|+|F(v)-F(x)|,$$
we infer from \eqref{eq-star}, \eqref{25-1-26-1} and \eqref{25-1-26-2} that
\begin{equation}\label{x-roughs}
|F(x)-F_{e}(x)|\leq 2(L_2+\Lambda )+3.
\end{equation}
Therefore, the statement $(2)$ of the theorem holds true.

In the following, we show that $F$ is an $(L_1, L_2, \Lambda^\prime)$-rough quasi-isometric mapping with  $\Lambda^\prime=4L_2+5\Lambda+6$.
Let $x_1, x_2\in X_Z$. We obtain from \eqref{x-roughs} that
\begin{align}
\Big| |F(x_1)-F(x_2)|- |F_e(x_1)-F_e(x_2)| \Big|&\leq |F(x_1)-F_e(x_1)|+|F_e(x_2)-F(x_2)|\notag\\
&\leq 4(L_2+\Lambda)+6.\label{F-F_e}
\end{align}
Since $F_e$ is $(L_1, L_2, \Lambda)$-rough quasi-isometric,
we obtain from \eqref{F-F_e} that
\begin{equation}\label{25-1-22-4}
L_1|x_1-x_2|-(4L_2+5\Lambda +6) \leq |F(x_1)-F(x_2)| \leq L_2|x_1-x_2|+4L_2+5\Lambda +6.
\end{equation}

For any $y\in X_W$, since $F_e(X_Z)$ is $\Lambda$-cobounded in $X_W$, we see that there is $x\in X_Z$ such that
\begin{equation*}
|F_e(x)-y|\leq \Lambda,
\end{equation*}
which, together with \eqref{x-roughs}, shows that
\begin{equation}\label{25-1-22-3}
|F(x)-y|\leq 2L_2+3\Lambda+3\leq \Lambda^\prime.
\end{equation}
Then it follows from \eqref{25-1-22-4} and \eqref{25-1-22-3} that $F: X_Z \to X_W$ is an \((L_1, L_2, \Lambda^\prime)\)-rough quasi-isometric mapping.

Finally,
by the statement $(2)$, it is clear that the sequences $\{F(x_n)\}_{n\in\mathbb N}$ and $\{F_e(x_n)\}_{n\in\mathbb N}$ are equivalent with respect to $\omega^\prime$ if $\{F_e(x_n)\}_{n\in\mathbb N}$ converges to infinity with respect to $\omega^\prime$ for $\{x_n\}_{n\in\mathbb N}\subset X_Z$. Thus it follows from Theorem~\ref{prop-boundary} that
 $\partial_\infty F=\partial_\infty F_e=f$ on $Z$, which establishes the statement $(3)$. The proof of this theorem is complete.
\end{proof}



\section{Boundary mappings of  rough quasi-isometric mappings}\label{sec-5}
The purpose of this section is to formulate and prove a yet general result from which Theorem~\ref{intro-thm-emb}$(ii)$ follows.

Let $(Z, d_Z)$ $($resp. $(W, d_W)$$)$ be a complete metric space, and let $X_Z$ $($resp. $X_W$$)$ a hyperbolic filling of $Z$ $($resp. $W$$)$ with parameters $\alpha_Z>1$, $\tau_Z>\max\Big\{3, \frac{\alpha_Z}{\alpha_Z-1}\Big\}$ $\big($resp. $\alpha_W>1$, $\tau_W>\max\Big\{3, \frac{\alpha_W}{\alpha_W-1}\Big\}$$\big)$. Denote by $V_Z$ $($resp.  $V_W$$)$ the vertex set of $X_Z$ $($resp.  $X_W$$)$.

Assume that $\omega$ $($resp. $\omega^\prime$$)$ denotes the unique point in $\partial_G X_Z$ $($resp. $\partial_G X_W$$)$ such that all anchored descending geodesic rays in $X_Z$ $($resp. $X_W$$)$ belong to it.
Let $\partial_\omega X_Z$ $($resp. $\partial_{\omega^\prime} X_W$$)$ be the Gromov boundary of $X_Z$ $($resp. $X_W$$)$ relative to $\omega$  $($resp. $\omega^\prime$$)$.
Let $d_\omega$ $($resp. $d_{\omega^\prime}$$)$  be  the visual metric on $\partial_\omega X_Z$  $($resp. $\partial_{\omega^\prime} X_W$$)$ with parameter $\epsilon=\log\alpha_Z$  $($resp. $\epsilon=\log\alpha_W$$)$, which is induced by the metric $d_Z$ $($resp. $d_W$$)$ as  in \eqref{d-visual}.

For any rough quasi-isometric mapping $\Psi: X_Z\rightarrow X_W$,  recall that $\Psi$ induces a boundary mapping
$$\partial_\infty\Psi: \partial _G X_Z\rightarrow \partial _G X_W,$$
which is defined by $\partial_\infty\Psi(x)=y$, where $y$ is the equivalence class of $\{\Psi(x_n)\}$ for some sequence $\{x_n\}\in x$.

\begin{thm}\label{thm-limit}
	Suppose that $\Psi: X_Z\rightarrow X_W$ is an $(L_1, L_2, \Lambda)$-rough quasi-isometric mapping with $L_2\geq L_1>0$ and $\Lambda\geq0$.
	If the boundary mapping $\partial_\infty \Psi$ maps $\omega$ to ${\omega^\prime}$, then $\partial_\infty \Psi:$ $(Z, d_Z)\to
	(W, d_W)$ is an $\eta$-quasi-symmetric mapping, where
	\begin{equation*}
		\eta(t)=
		\left\{\begin{array}{cl}
			\lambda t^{\theta_1},& {\rm for} \;\; 0<t<1, \\
			\lambda t^{\theta_2},& {\rm for} \;\; t\geq1,
		\end{array}\right.
	\end{equation*}
	$\theta_1=\frac{\log\alpha_W}{\log\alpha_Z}L_1$, $\theta_2=\frac{\log\alpha_W}{\log\alpha_Z}L_2$ and
	$\lambda=\lambda(\alpha_Z, \tau_Z, \alpha_W, \tau_W, L_1, L_2, \Lambda)$.
\end{thm}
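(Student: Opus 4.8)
The plan is to reduce the quasi-symmetry inequality to a linear comparison of two Gromov-product differences and then feed it into the strong power quasi-isometry property of $\Psi$. Write $F:=\partial_\infty\Psi$. Since $\Psi$ is a rough quasi-isometry between the Gromov hyperbolic geodesic spaces $X_Z$ and $X_W$, the induced boundary map is a homeomorphism of $\partial_G X_Z$ onto $\partial_G X_W$ (cf. \cite[Proposition 6.3]{BSC}); because $\partial_\infty\Psi(\omega)=\omega'$, it restricts to a homeomorphism $F:\partial_\omega X_Z=Z\to\partial_{\omega'} X_W=W$. Fix a triple of distinct points $x,y,z\in Z$ and put $t=d_Z(x,z)/d_Z(y,z)$. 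Applying the two-sided comparison \eqref{compare} on $X_Z$ and on $X_W$ (with $b,b'$ the Busemann functions based at $\omega,\omega'$), the two ratios in the definition of quasi-symmetry satisfy, up to one fixed multiplicative constant depending only on $\alpha_Z,\tau_Z,\alpha_W,\tau_W$,
$$t\asymp \alpha_Z^{\,A},\qquad \frac{d_W(F(x),F(z))}{d_W(F(y),F(z))}\asymp \alpha_W^{\,B},$$
where $A:=(z|y)_b-(z|x)_b$ and $B:=(F(z)|F(y))_{b'}-(F(z)|F(x))_{b'}$. Hence it suffices to bound $B$ linearly from above in terms of $A$, in the two regimes $A\ge 0$ and $A<0$.

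The bridge to the quasi-isometric structure is the four-point function. By Lemma~\ref{lem-langle}$(ii)$ we have $|A-\langle z,y,x,\omega\rangle|\le C_2$ and $|B-\langle F(z),F(y),F(x),\omega'\rangle|\le C_2'$, so it remains to compare the two boundary four-point functions. To do this I would approximate by interior quadruples: pick $\{x_n\}\in x$, $\{y_n\}\in y$, $\{z_n\}\in z$, $\{\omega_n\}\in\omega$ in $X_Z$; then $\{\Psi(x_n)\}\in F(x)$, $\{\Psi(y_n)\}\in F(y)$, $\{\Psi(z_n)\}\in F(z)$, and, crucially, $\{\Psi(\omega_n)\}\in\omega'$ precisely because $\partial_\infty\Psi(\omega)=\omega'$. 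Lemma~\ref{thm-strong} upgrades $\Psi$ to a strongly $(L_1,L_2,d)$-power quasi-isometric mapping, so on any interior quadruple of nonnegative four-point value the two-sided estimate $L_1\langle\cdot\rangle-d\le\langle\Psi(\cdot)\rangle\le L_2\langle\cdot\rangle+d$ holds; letting $n\to\infty$ and invoking Lemma~\ref{lem-langle}$(i)$ on both spaces transfers this, up to additive constants, to $\langle z,y,x,\omega\rangle$ and $\langle F(z),F(y),F(x),\omega'\rangle$.

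The final step is the case split, which matches the dichotomy $t\ge 1$ versus $t<1$. If $\langle z,y,x,\omega\rangle\ge 0$, the upper half of the strong estimate yields $B\le L_2 A+\mathrm{const}$; if $\langle z,y,x,\omega\rangle<0$, I would instead apply the lower half to the reordered quadruple $(z,x,y,\omega)$ and use the antisymmetry \eqref{eq-strong}, which gives $\langle z,x,y,\omega\rangle=-\langle z,y,x,\omega\rangle\ge 0$ and $\langle F(z),F(x),F(y),\omega'\rangle=-\langle F(z),F(y),F(x),\omega'\rangle$, hence $B\le L_1 A+\mathrm{const}$. Substituting back through $\alpha_W^{\,B}$ and using the identity $\alpha_W^{\,L_iA}=(\alpha_Z^{\,A})^{L_i\log\alpha_W/\log\alpha_Z}\asymp t^{\theta_i}$ produces the exponent $\theta_2=\tfrac{\log\alpha_W}{\log\alpha_Z}L_2$ when $t\ge 1$ and $\theta_1=\tfrac{\log\alpha_W}{\log\alpha_Z}L_1$ when $t<1$, with every accumulated additive and multiplicative error collected into a single constant $\lambda=\lambda(\alpha_Z,\tau_Z,\alpha_W,\tau_W,L_1,L_2,\Lambda)\ge 1$. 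This is exactly the function $\eta$ in the statement, so $F$ is $\eta$-quasi-symmetric.

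The step I expect to be delicate is the passage to the boundary in the middle paragraph: the strong power quasi-isometry inequality is available only on quadruples with nonnegative four-point value, whereas Lemma~\ref{lem-langle}$(i)$ controls $\langle z_n,y_n,x_n,\omega_n\rangle$ only up to the additive error $C_1$, so the approximating values may straddle $0$ even when the limiting value is positive. The remedy is, for each $n$, to apply the estimate to whichever of the two antisymmetric orderings has nonnegative value (one always does, by \eqref{eq-strong}); for $|\langle z,y,x,\omega\rangle|>C_1$ the ordering stabilizes as $n\to\infty$ and gives the clean bound, while the borderline range $|\langle z,y,x,\omega\rangle|\le C_1$ corresponds to $t$ confined to a fixed interval around $1$, on which $\eta(t)$ is comparable to $\lambda$ and the estimate is trivially absorbed. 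Keeping track of the several additive constants ($C$ from \eqref{compare}, $C_1,C_2$ from Lemma~\ref{lem-langle}, $d$ from Lemma~\ref{thm-strong}, and multiples of $\delta$) across the two spaces is the remaining bookkeeping.
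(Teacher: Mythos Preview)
Your proposal is correct and follows essentially the same route as the paper: reduce the distance ratios to differences of Busemann-based Gromov products via \eqref{compare}, pass to the four-point function $\langle\cdot,\cdot,\cdot,\cdot\rangle$ via Lemma~\ref{lem-langle}$(ii)$, apply the strong power quasi-isometry property (Lemma~\ref{thm-strong}) to interior approximants and transfer to the boundary with Lemma~\ref{lem-langle}$(i)$, then split cases on the sign of $\langle z,y,x,\omega\rangle$ using the antisymmetry \eqref{eq-strong}. You have also correctly anticipated the one genuinely delicate point---that the approximating four-point values may straddle $0$ when $|\langle z,y,x,\omega\rangle|\le C_1$---and your remedy (apply the estimate to whichever ordering is nonnegative, absorbing the resulting bounded contribution into $\lambda$) is exactly how the paper handles it.
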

\begin{proof}
	By Theorem~\ref{prop-identi} and \eqref{d-visual}, there exists a canonical identification between $(Z, d_Z)$ (resp. $(W, d_W)$) and $(\partial_\omega X_Z, d_\omega)$ (resp. $(\partial_{\omega^\prime} X_W, d_{\omega^\prime})$), which is indeed an isometry. Therefore, to prove the theorem, it suffices  to show that $\partial_\infty \Psi$ is an $\eta$-quasi-symmetric mapping from $(\partial_\omega X_Z, d_\omega)$ to $(\partial_{\omega^\prime} X_W, d_{\omega^\prime})$.

	Assume that $o\in X$ and $o^\prime=\Psi(o)$.  Let $\gamma$ (resp. $\gamma^\prime$) be a fixed geodesic ray from $o$ (resp. $o^\prime$) to $\omega$ (resp. $\omega^\prime$) with $\gamma(0)=o$ (resp. $\gamma^\prime(0)=o^\prime$), and let $b$ (resp. $b^\prime$) be the Busemann function associated to $\gamma$ (resp. $\gamma^\prime$) based at $\omega$ (resp. $\omega^\prime$).
	Recall that $X_Z$ (resp. $X_W$) is a $\delta_Z$-hyperbolic (resp. $\delta_W$-hyperbolic) geodesic space with $\delta_Z=\delta_Z(\alpha_Z, \tau_Z)$ $($resp. $\delta_W=\delta_W(\alpha_W, \tau_W)$$)$. We start the proof of the theorem with two claims.

	
	\begin{claim}\label{25-6-5-1}
		There exists a constant $C_\Psi\geq 1$ such that for any distinct points $x, y, z\in \partial_\omega X_Z$,
		\begin{equation*}\label{ex-claim-1}
			\alpha_W^{\langle \partial_\infty \Psi(x), \partial_\infty \Psi(y), \partial_\infty \Psi(z), \partial_\infty \Psi(\omega)\rangle}\leq
			\left\{\begin{array}{cl}
				C_\Psi\alpha_W^{L_2\langle x, y, z, \omega\rangle},& {\rm if} \;\; \langle x, y, z, \omega\rangle\geq0, \\
				C_\Psi\alpha_W^{L_1\langle x, y, z, \omega\rangle},& {\rm if} \;\; \langle x, y, z, \omega\rangle<0,
			\end{array}\right.
		\end{equation*}
		where $C_\Psi=C_\Psi(\alpha_Z, \tau_Z, \alpha_W, \tau_W, L_1, L_2, \Lambda)$.
	\end{claim}
	
	To prove the claim, we need some preparation.
	By Lemma~\ref{thm-strong}, $\Psi: X_Z\to X_W$ is a strongly $(L_1, L_2, \Lambda^\prime)$-power quasi-isometric mapping with $\Lambda^\prime=\Lambda^\prime(\alpha_Z, \tau_Z, \alpha_W, \tau_W, L_1, L_2, \Lambda)$. This means that for any quadruple of points $\{u_1, u_2, u_3, u_4\}$ in $X_Z$, if
	$\langle u_1, u_2, u_3, u_4\rangle\geq 0$, then
	\begin{equation}\label{strong-XZXW}
		L_1\langle u_1, u_2, u_3, u_4\rangle-\Lambda^\prime\leq \langle \Psi(u_1), \Psi(u_2), \Psi(u_3), \Psi(u_4)\rangle\leq L_2\langle u_1, u_2, u_3, u_4\rangle+\Lambda^\prime.
	\end{equation}
	Moreover, if  $\langle u_1, u_2, u_3, u_4\rangle< 0$, then \eqref{eq-strong} gives
	\begin{equation}\label{25-6-7-1}
		L_2\langle u_1, u_2, u_3, u_4\rangle-\Lambda^\prime\leq \langle \Psi(u_1), \Psi(u_2), \Psi(u_3), \Psi(u_4)\rangle\leq L_1\langle u_1, u_2, u_3, u_4\rangle+\Lambda^\prime.
	\end{equation}
	
	Since $0<L_1\leq L_2$, we conclude from \eqref{strong-XZXW} and \eqref{25-6-7-1} that for any $u_1$, $u_2$, $u_3$, $u_4\in X_Z$,
	\begin{equation}\label{25-02-11-1}
		-L_2\big|\langle u_1, u_2, u_3, u_4\rangle\big|-\Lambda^\prime\leq \langle \Psi(u_1), \Psi(u_2), \Psi(u_3), \Psi(u_4)\rangle\leq L_2\big|\langle u_1, u_2, u_3, u_4\rangle\big|+\Lambda^\prime.
	\end{equation}
	
	Now, we are ready to prove  Claim~\ref{25-6-5-1}. Let $x$, $y$ and $z$ be distinct points in $\partial_\omega X_Z$, and let $\{x_n\}\in x$, $\{y_n\}\in y$, $\{z_n\}\in z$ and $\{\omega_n\}\in \omega$ be sequences in $X_Z$. Then Lemma~\ref{lem-langle}$(i)$ ensures that there exists  a constant $C_1=C_1(\alpha_Z, \tau_Z)\geq 0$ such that
	\begin{align*}
		\langle x, y, z, \omega\rangle-C_1 \leq \liminf_{n\to\infty}\langle x_n, y_n, z_n, \omega_n\rangle
		\leq \langle x, y, z, \omega\rangle+C_1.
	\end{align*}

	Since $\{x_n\}\in x$ implies that $\{\Psi(x_n)\}\in \partial_\infty \Psi(x)$,  again, by Lemma~\ref{lem-langle}$(i)$, we know that there exists a constant $C_1^\prime=C_1^\prime(\alpha_W, \tau_W)\geq 0$ satisfying
	\begin{align}\label{25-02-11-2}
		\langle \partial_\infty \Psi(x), \partial_\infty \Psi(y), \partial_\infty \Psi(z), \partial_\infty \Psi(\omega)\rangle-C_1^\prime
		&\leq  \liminf_{n\to\infty}\langle \Psi(x_n),  \Psi(y_n),  \Psi(z_n),  \Psi(\omega_n)\rangle \notag\\
		&\leq \langle \partial_\infty \Psi(x), \partial_\infty \Psi(y), \partial_\infty \Psi(z), \partial_\infty \Psi(\omega)\rangle+C_1^\prime.
	\end{align}
	
	Moreover, we need a relation between the quantities $\langle \partial_\infty \Psi(x),$ $\partial_\infty \Psi(y), \partial_\infty \Psi(z), \partial_\infty \Psi(\omega)\rangle$ and $\langle x, y, z, \omega\rangle$, which is formulated in \eqref{Langle} below. To reach this goal, we divide the arguments into the following two cases.
	
\bca Suppose that $\langle x, y, z, \omega\rangle>C_1$.
\eca
Since this assumption implies $\liminf_{n\to\infty}\langle x_n, y_n, z_n, \omega_n\rangle> 0,$ we see that  $\langle x_n, y_n, z_n, \omega_n\rangle\geq 0$ for sufficiently large $n$. It follows from \eqref{strong-XZXW}, \eqref{25-02-11-2} and Lemma~\ref{lem-langle}$(i)$ that
	\begin{align}\label{25-02-11-3}
		\langle \partial_\infty \Psi(x), \partial_\infty \Psi(y), \partial_\infty \Psi(z), \partial_\infty \Psi(\omega)\rangle
		&\leq L_2\liminf_{n\to\infty}\langle x_n, y_n, z_n, \omega_n\rangle+\Lambda^\prime+C_1^\prime \notag \\
		&\leq L_2\langle x, y, z, \omega\rangle+L_2 C_1+\Lambda^\prime+C_1^\prime
	\end{align}
	and
	\begin{align}\label{25-02-11-4}
		\langle \partial_\infty \Psi(x), \partial_\infty \Psi(y), \partial_\infty \Psi(z), \partial_\infty \Psi(\omega)\rangle
		&\geq  L_1\liminf_{n\to\infty}\langle x_n, y_n, z_n, \omega_n\rangle-\Lambda^\prime-C_1^\prime \notag \\
		&\geq  L_1\langle x, y, z, \omega\rangle-L_1 C_1-\Lambda^\prime-C_1^\prime.
	\end{align}
	
\bca	Suppose that $0\leq \langle x, y, z, \omega\rangle\leq C_1$.
\eca

 Since Lemma~\ref{lem-langle}$(i)$ gives
	$$
	\left|\liminf_{n\to\infty}\langle x_n, y_n, z_n, \omega_n\rangle\right|\leq  \langle x, y, z, \omega\rangle+C_1\leq 2C_1,
	$$
	we deduce from \eqref{25-02-11-1} and \eqref{25-02-11-2} that
	\begin{align}\label{25-02-11-5}
		\langle \partial_\infty \Psi(x), \partial_\infty \Psi(y), \partial_\infty \Psi(z), \partial_\infty \Psi(\omega)\rangle
		&\leq L_2\left|\liminf_{n\to\infty}\langle x_n, y_n, z_n, \omega_n\rangle\right|+\Lambda^\prime+C_1^\prime \notag\\
		&\leq L_2\langle x, y, z, \omega\rangle+ L_2C_1+\Lambda^\prime+C_1^\prime
	\end{align}
	and
	\begin{align}\label{25-02-11-6}
		\langle \partial_\infty \Psi(x), \partial_\infty \Psi(y), \partial_\infty \Psi(z), \partial_\infty \Psi(\omega)\rangle
		&\geq -L_2\left|\liminf_{n\to\infty}\langle x_n, y_n, z_n, \omega_n\rangle\right|-\Lambda^\prime-C_1^\prime \notag\\
		&\geq -2L_2C_1-\Lambda^\prime-C_1^\prime \notag\\
		&\geq L_1\langle x, y, z, \omega\rangle-3L_2C_1-\Lambda^\prime-C_1^\prime,
	\end{align}
	where in the last inequality, the assumption that  $0\leq \langle x, y, z, \omega\rangle\leq C_1$ and the fact of $0<L_1\leq L_2 $ are applied.
	
	By the relations \eqref{25-02-11-3}$-$\eqref{25-02-11-6}, we conclude that for any distinct points $x$, $y$ and $z$ in $\partial_\omega X_Z$, if $\langle x, y, z, \omega\rangle\geq 0$, then
	\begin{equation}\label{Langle}
		L_1\langle x, y, z, \omega\rangle-\Lambda^{\prime\prime}\leq \langle \partial_\infty \Psi(x), \partial_\infty \Psi(y), \partial_\infty \Psi(z), \partial_\infty \Psi(\omega)\rangle\leq L_2\langle x, y, z, \omega\rangle+\Lambda^{\prime\prime},
	\end{equation}
	where $\Lambda^{\prime\prime}=3L_2C_1+\Lambda^\prime+C_1^\prime$.
	
 Now, we are ready to finish the proof of the claim.	If $\langle x, y, z, \omega\rangle\geq0$, then it follows from \eqref{Langle} that
	\begin{equation*}
		\alpha_W^{\langle \partial_\infty \Psi(x), \partial_\infty \Psi(y), \partial_\infty \Psi(z), \partial_\infty \Psi(\omega)\rangle}
		\leq \alpha_W^{L_2\langle x, y, z, \omega\rangle+\Lambda^{\prime\prime}}.
	\end{equation*}
	If $\langle x, y, z, \omega\rangle<0$, then \eqref{eq-strong} implies $\langle x, z, y, \omega\rangle>0$, and thus, \eqref{Langle} gives
	\begin{align*}
		\alpha_W^{\langle \partial_\infty \Psi(x), \partial_\infty \Psi(y), \partial_\infty \Psi(z), \partial_\infty \Psi(\omega)\rangle}
		=\alpha_W^{-\langle \partial_\infty \Psi(x), \partial_\infty \Psi(z), \partial_\infty \Psi(y), \partial_\infty \Psi(\omega)\rangle}
		\leq \alpha_W^{L_1\langle x, y, z, \omega\rangle+\Lambda^{\prime\prime}}.
	\end{align*}
	By setting $C_\Psi=\alpha_W^{\Lambda^{\prime\prime}}$, we see that Claim~\ref{25-6-5-1} holds true.
	
	\smallskip
	\begin{claim}\label{25-6-5-2} There exist constants $C_Z=C_Z(\alpha_Z, \tau_Z)\geq 1$ and $C_W=C_W(\alpha_W, \tau_W)\geq 1$ such that for any distinct points $x, y, z\in \partial_{\omega}X_Z$,
		\begin{equation}\label{visu-1}
			C_Z^{-1}\alpha_Z^{\langle x, y, z, \omega\rangle} \leq \frac{d_\omega(x, z)}{d_\omega(x, y)}\leq C_Z\alpha_Z^{\langle x, y, z, \omega\rangle},
		\end{equation}
		and for any distinct points $x^\prime, y^\prime, z^\prime\in \partial_{\omega^\prime} X_W$,
		\begin{equation}\label{visu-2}
			C_W^{-1}\alpha_W^{\langle x^\prime, y^\prime, z^\prime, \omega^\prime\rangle} \leq \frac{d_{\omega^\prime}(x^\prime, z^\prime)}{d_{\omega^\prime}(x^\prime, y^\prime)}\leq C_W\alpha_W^{\langle x^\prime, y^\prime, z^\prime, \omega^\prime\rangle}.
		\end{equation}
		
	\end{claim}
	
	To prove the claim, let $x$, $y$ and $z$ be three distinct points in $ \partial_{\omega}X_Z$. By \eqref{compare}, there exist constants $C_2=C_2(\alpha_Z, \tau_Z)\geq 1$ and $C'_2=C'_2(\alpha_W, \tau_W)\geq 1$ such that
	\begin{align}\label{25-02-11-7}
		C_2^{-1}\alpha_Z^{(x|y)_b-(x|z)_b}\leq \frac{d_\omega(x, z)}{d_\omega(x, y)}\leq C_2\alpha_Z^{(x|y)_b-(x|z)_b}
	\end{align}
	and
	\begin{align}\label{25-02-11-8}
		(C'_2)^{-1}\alpha_W^{(x^\prime|y^\prime)_b-(x^\prime|z^\prime)_b}\leq \frac{d_{\omega^\prime}(x^\prime, z^\prime)}{d_{\omega^\prime}(x^\prime, y^\prime)}\leq C'_2\alpha_W^{(x^\prime|y^\prime)_b-(x^\prime|z^\prime)_b}.
	\end{align}
	
	Also, we see from Lemma~\ref{lem-langle}$(ii)$ that there exist constants $C_3=C_3(\alpha_Z, \tau_Z)\geq 0$ and $C_3^\prime=C_3^\prime(\alpha_W, \tau_W)\geq 0$ such that
	\begin{equation}\label{25-02-11-9}
		\big|(x|y)_b-(x|z)_b-\langle x, y, z, \omega\rangle\big|\leq C_3
	\end{equation}
	and
	\begin{equation}\label{25-02-11-10}
		\big|(x^\prime|y^\prime)_b-(x^\prime|z^\prime)_b-\langle x^\prime, y^\prime, z^\prime, \omega^\prime\rangle\big|\leq C_3^\prime.
	\end{equation}

	By setting $C_Z=C_2\alpha_Z^{C_3}$ and $C_W=C'_2\alpha_W^{C_3^\prime}$, it is evident that \eqref{visu-1}  follows from \eqref{25-02-11-7} and \eqref{25-02-11-9}, and \eqref{visu-2}  follows from \eqref{25-02-11-8} and \eqref{25-02-11-10}. Therefore Claim~\ref{25-6-5-2} is proved.\medskip
	
	Now, we are ready to prove the theorem based on Claims~\ref{25-6-5-1} and \ref{25-6-5-2}.
	Since $\Psi: X_Z\rightarrow X_W$ is a rough quasi-isometric mapping, by \cite[Proposition 6.3$(4)$]{BSC}, we know that $\partial_\infty \Psi: \partial_G X_Z\to \partial_G X_W$ is a bijection. Recall that $\partial_{\omega}X_Z=\partial_G X_Z\setminus\{\omega\}$ and $\partial_{\omega^\prime}X_W=\partial_G X_W\setminus\{\omega^\prime\}$ (see Theorem~\ref{prop-Gromov}).
	Then the assumption of $\partial_\infty \Psi(\omega)=\omega^\prime$ implies that $\partial_\infty \Psi$ is also a bijiection between $\partial_{\omega}X_Z$ and $\partial_{\omega^\prime}X_W$.
	This means that $x\in\partial_{\omega}X_Z$ if and only if $\partial_\infty \Psi(x)\in\partial_{\omega^\prime}X_W$.
	
	Let $x, y, z\in \partial_{\omega}X_Z$ be three distinct points. Then $\partial_\infty \Psi(x)$, $\partial_\infty \Psi(y)$ and $\partial_\infty \Psi(z)$ are distinct points in
	$\partial_{\omega^\prime} X_W$.
	To construct the needed control function $\eta$ in the theorem, we divide the discussions into the following three cases.
	
	If $d_\omega(x, z)> C_Z d_\omega(x, y)$, then we infer from \eqref{visu-1} in Claim~\ref{25-6-5-2} that $\langle x, y, z, \omega\rangle>0$, and thus, it follows from Claims~\ref{25-6-5-1} and \ref{25-6-5-2} that
	\begin{equation}\label{qsm-1}
		\frac{d_{\omega^\prime}(\partial_\infty \Psi(x), \partial_\infty \Psi(z))}{d_{\omega^\prime}(\partial_\infty \Psi(x), \partial_\infty \Psi(y))}\leq \lambda_1 \left(\frac{d_\omega(x, z)}{d_\omega(x, y)}\right)^{\frac{\log\alpha_W}{\log\alpha_Z}L_2},
	\end{equation}
	where $\lambda_1=C_WC_\Psi C_Z^{\frac{\log \alpha_W}{\log\alpha_Z}L_2}$.
	
	If $d_\omega(x, z)< C_Z^{-1} d_\omega(x, y)$, then \eqref{visu-1} in Claim~\ref{25-6-5-2} ensures that $\langle x, y, z, \omega\rangle<0$, and thus, it follows from  Claims~\ref{25-6-5-1} and~\ref{25-6-5-2} that
	\begin{equation}\label{qsm-2}
		\frac{d_{\omega^\prime}(\partial_\infty \Psi(x), \partial_\infty \Psi(z))}{d_{\omega^\prime}(\partial_\infty \Psi(x), \partial_\infty \Psi(y))}\leq \lambda_2 \left(\frac{d_\omega(x, z)}{d_\omega(x, y)}\right)^{\frac{\log\alpha_W}{\log\alpha_Z}L_1},
	\end{equation}  where $\lambda_2=C_WC_\Psi C_Z^{\frac{\log \alpha_W}{\log\alpha_Z}L_1}$.
	
For the remaining case, that is, $C_Z^{-1}d_\omega(x, y)\leq d_\omega(x, z)\leq C_Z d_\omega(x, y)$, since
$$C_Z^{-1}\leq \frac{d_\omega(x, y)}{d_\omega(x, z)}\leq C_Z,$$
we deduce from \eqref{visu-1}  in Claim~\ref{25-6-5-2} that
	$$
	C_Z^{-2} \leq \alpha_Z^{\langle x, y, z, \omega\rangle} \leq C_Z^2.
	$$
Therefore, by invoking Claim~\ref{25-6-5-1} and \eqref{visu-2} in Claim~\ref{25-6-5-2}, we derive the following estimate:
	\begin{align*}
		\frac{d_{\omega^\prime}(\partial_\infty \Psi(x), \partial_\infty \Psi(z))}{d_{\omega^\prime}(\partial_\infty \Psi(x), \partial_\infty \Psi(y))}
		&\leq C_W\alpha_W^{\langle \partial_\infty \Psi(x), \partial_\infty \Psi(y), \partial_\infty \Psi(z), \partial_\infty \Psi(\omega)\rangle}\\
		&\leq C_WC_\Psi \alpha_Z^{\max\big\{\frac{\log\alpha_W}{\log\alpha_Z}L_1\langle x, y, z, \omega\rangle, \frac{\log\alpha_W}{\log\alpha_Z}L_2\langle x, y, z, \omega\rangle\big\}}
		\leq C_I,
	\end{align*} where $C_I=C_WC_\Psi  C_Z^{\frac{2\log\alpha_W}{\log\alpha_Z}L_2}$.
	
Consequently,	if $C_Z^{-1} d_\omega(x, y)\leq d_\omega(x, z)\leq d_\omega(x, y)$, we have
	\begin{equation}\label{qsm-3}
		\frac{d_{\omega^\prime}(\partial_\infty \Psi(x), \partial_\infty \Psi(z))}{d_{\omega^\prime}(\partial_\infty \Psi(x), \partial_\infty \Psi(y))}
		\leq C_I \left(C_Z\frac{d_\omega(x, z)}{d_\omega(x, y)}\right)^{\frac{\log\alpha_W}{\log\alpha_Z}L_1}=\lambda_3\left(\frac{d_\omega(x, z)}{d_\omega(x, y)}\right)^{\frac{\log\alpha_W}{\log\alpha_Z}L_1},
	\end{equation}  where $\lambda_3=C_I C_Z^{\frac{\log\alpha_W}{\log\alpha_Z}L_1}$.
		If $d_\omega(x, y)\leq d_\omega(x, z)\leq C_Z d_\omega(x, y)$, we obtain
	\begin{equation}\label{qsm-4}
		\frac{d_{\omega^\prime}(\partial_\infty \Psi(x), \partial_\infty \Psi(z))}{d_{\omega^\prime}(\partial_\infty \Psi(x), \partial_\infty \Psi(y))}
		\leq \lambda_4\left(\frac{d_\omega(x, z)}{d_\omega(x, y)}\right)^{\frac{\log\alpha_W}{\log\alpha_Z}L_2},
	\end{equation}  where $\lambda_4=C_I$.
	
	By combining \eqref{qsm-1}$-$\eqref{qsm-4}, we see that the boundary mapping
	$\partial_\infty \Psi: (\partial_\omega X_Z, d_\omega)\rightarrow (\partial_{\omega^\prime} X_W, d_{\omega^\prime})$
	is an $\eta$-quasi-symmetric mapping with
	\begin{equation*}
		\eta(t)=
		\left\{\begin{array}{cl}
			\lambda t^{\theta_1},& \text{for} \;\; 0<t<1, \\
			\lambda t^{\theta_2},& \text{for} \;\; t\geq1,
		\end{array}\right.
	\end{equation*}
	where $\theta_1=\frac{\log\alpha_W}{\log\alpha_Z}L_1$, $\theta_2=\frac{\log\alpha_W}{\log\alpha_Z}L_2$ and $\lambda=\max\{\lambda_1, \lambda_2, \lambda_3, \lambda_4\}\leq C_WC_\Psi  C_Z^{\frac{3\log\alpha_W}{\log\alpha_Z}L_2}$.
\end{proof}

\begin{rem}\rm
	Suppose that $f: Z\rightarrow W$ is a $(\theta, \lambda)$-power quasi-symmetric mapping between two complete metric spaces $(Z, d_Z)$ and $(W, d_W)$ with $\theta\geq 1$ and $\lambda\geq 1$. By Theorem~\ref{thm-emb}, we
	see that $f$ can be extended to a rough quasi-isometric mapping $F: X_Z\rightarrow X_W$, and then, by applying Theorem~\ref{thm-limit}, we get $f$ back with the same exponents. This shows that the parameters $L_1$ and $L_2$ in Theorem~\ref{thm-emb} as well as the exponents $\theta_1$ and $\theta_2$ in Theorem~\ref{thm-limit} are all sharp.
\end{rem}

	\section*{Acknowledgments}
	M. Huang was partially supported by the National Natural Science Foundation of China (NSFC) under Grant No.~12371071. X. Wang and Z. Xu were partially supported by NSFC under Grant No.~12571081. Z. Wang was partially supported by the Natural Science Foundation of Hunan Province under Grant No.~2024JJ6299 and by the NSFC under Grants No.~12101226 and~12371071.

\bigskip

\noindent Manzi Huang,

\noindent{\it E-mail}:  \texttt{mzhuang@hunnu.edu.cn}

\medskip

\noindent Xiantao Wang,

\noindent{\it E-mail}:  \texttt{xtwang@hunnu.edu.cn}

\medskip

\noindent Zhuang Wang,

\noindent{\it E-mail}:  \texttt{zwang@hunnu.edu.cn}

\medskip

\noindent
Key Laboratory of Computing and Stochastic Mathematics (Ministry of Education), School of Mathematics and Statistics, Hunan Normal University, Changsha, Hunan 410081, P. R. China.

\bigskip

\noindent Zhihao Xu,

\noindent{\it E-mail}:  \texttt{734669860@qq.com}
\medskip

\noindent College of Mathematics and Statistics, Hengyang Normal University, Hengyang, Hunan 421010,  P. R. China.


\begin{thebibliography}{99}
		\setlength{\itemsep}{0pt}
		
		
		\bibitem{BBGS} A.~Bj\"{o}rn, J.~Bj\"{o}rn, T.~Gill, and N.~Shanmugalingam, {\it Geometric analysis on Cantor sets and trees}, J. Reine Angew. Math. 725 (2017), 63--114.
		
		\bibitem{BBS} A.~Bj\"{o}rn, J.~Bj\"{o}rn, and N.~Shanmugalingam, {\it Extension and trace results for doubling metric measure spaces and their hyperbolic fillings}, J. Math. Pures Appl. 159 (2022), 196--249.
		
		\bibitem{BHK} M.~Bonk, J.~Heinonen, and P.~Koskela, {\it Uniformizing Gromov hyperbolic spaces}, Ast\'{e}risque No. 270 (2001), viii+99 pp.
		
		\bibitem{BS} M.~Bonk and E.~Saksman, {\it Sobolev spaces and hyperbolic fillings}, J. Reine Angew. Math. 737 (2018), 161--187.
		
		\bibitem{BST} M.~Bonk, E.~Saksman, and T. Soto, {\it Triebel-Lizorkin spaces on metric spaces via hyperbolic fillings}, Indiana Univ. Math. J. 67 (2018), 1625--1663.
		
		\bibitem{BSC} M.~Bonk and O.~Schramm, {\it Embeddings of Gromov hyperbolic spaces}, Geom. Funct. Anal. 10 (2000), 266--306.
		
		
		
		\bibitem{BP03} M.~Bourdon and H.~Pajot, {\it Cohomologie $l_p$ et espaces de Besov}, J. Reine Angew. Math. 558 (2003), 85--108.
		
		\bibitem{BuSc} S.~Buyalo and V.~Schroeder, {\it Elements of Asymptotic Geometry}, EMS Monographs in Mathematics. European Mathematical Society (EMS), Z\"{u}rich, 2007. xii+200 pp.
		
		\bibitem{BuC} C.~Butler, {\it Uniformizing Gromov hyperbolic spaces with Busemann functions}, arXiv:2007.11143.
		
		\bibitem{BC} C.~Butler, {\it Extension and trace theorems for noncompact doubling spaces}, arXiv:2009.10168.
		
		\bibitem{Ca13} M.~Carrasco Piaggio, {\it On the conformal gauge of a compact metric space}, Ann. Sci. \'Ec. Norm. Sup\'ur. 46 (2013), 495--548.
		
		 \bibitem{Elek} G.~Elek, {\it The $l_p$-cohomology and the conformal dimension of hyperbolic cones}, Geom. Dedicata 68 (1997), 263--279.
		
	\bibitem{Esk98}	 A.~Eskin, {\it Quasi-isometric rigidity of nonuniform lattices in higher rank symmetric spaces},	J. Amer. Math. Soc. 11 (1998), 321--361.
		
		
\bibitem{GH} E.~Ghys and P.~de la Harpe, {\it Sur les groupes hyperboliques d'apr\`{e}s Mikhael Gromov}, Progr. Math., 83, Birkh\"{a}user Boston, Boston, MA, 1990.
		
		
		
		
		
		\bibitem{Gr87} M.~Gromov, {\it Hyperbolic groups}, Essays in group theory, 75--263, Math. Sci. Res. Inst. Publ., 8, Springer, New York, 1987.
		
	\bibitem{Gr93} M.~Gromov, {\it Asymptotic invariants of infinite groups}, Geometric group theory, Vol. 2 (Sussex, 1991), Cambridge Univ. Press, Cambridge, 1993, pp. 1--295.
		
		
		
		
		
		\bibitem{H} J.~Heinonen, {\it Lectures on analysis on metric spaces},  Universitext. Springer-Verlag, New York, 2001. x+140 pp.
		
		
		
		
		
		
		
		
		
		
			\bibitem{H-W-W-X-prep} M.~Huang, X.~Wang, Z.~Wang, and Z.~Xu, {\it Sharp embedding results between Besov spaces induced by power quasi-symmetric mappings}, In preparation.
		
		\bibitem{HX} M.~Huang and Z.~Xu, {\it Characterizations for the existence of traces of first-order Sobolev spaces on hyperbolic fillings},  Monatsh. Math. 203 (2024), 387--417.
		
		\bibitem{HXu} M.~Huang and Z.~Xu, {\it Quasi-symmetries between metric spaces and rough quasi-isometries between their infinite hyperbolic cones}, arXiv:2211.15020.
		
		
		
		
		\bibitem{Ib11} Z.~Ibragimov, {\it Hyperbolizing hyperspaces}, Michigan Math. J. 60 (2011), 215--239.
		
		\bibitem{Ib14} Z.~Ibragimov, {\it A hyperbolic filling for ultrametric spaces}, Comput. Methods Funct. Theory 14 (2014), 315--329.
		
		\bibitem{Jordi} J.~Jordi, {\it Interplay between interior and boundary geometry in Gromov hyperbolic spaces}, Geom. Dedicata 149 (2010), 129--154.
		
	\bibitem{Ka-B} I. Kapovich and N. Benakli,	{\it Boundaries of hyperbolic groups}, Contemp. Math., 296, Amer. Math. Soc., Providence, RI, 2002.
		
		\bibitem{Kle-Le98}	B. Kleiner and B. Leeb, {\it Rigidity of quasi-isometries for symmetric spaces and Euclidean buildings}, Inst. Hautes \'Etudes Sci. Publ. Math. 86 (1997), 115--197.
		
		
		
		
		
		\bibitem{KW} P.~Koskela and Z.~Wang, {\it Dyadic norm Besov-type spaces as trace spaces on regular trees}, Potential Anal. 53 (2020), 1317--1346.
		
		
		
		
		
		
		
		
		
		
		
		
		
		\bibitem{Li1} J.~Lindquist, {\it Weak capacity and modulus comparability in Ahlfors regular metric spaces}, Anal. Geom. Metr. Spaces 4 (2016), 399--424.
		
		
		
		\bibitem{Map} \'A. Mart\'inez-P\'erez, {\it quasi-isometries between visual hyperbolic spaces}, Manuscripta Math. 137, (2012) 195--213.
		
		
		
		\bibitem{ST17} E.~Saksman and T.~Soto, {\it Traces of Besov, Triebel-Lizorkin and Sobolev spaces on metric spaces,} Anal. Geom. Metr. Spaces 5 (2017), 98--115.
		
		%
		%
		\bibitem{Sh22} N.~Shanmugalingam, {\it On Carrasco Piaggio's theorem characterizing quasi-symmetric maps from compact doubling spaces to Ahlfors regular spaces}, Potentials and partial differential equations---the legacy of David R. Adams, 23--48, Adv. Anal. Geom., 8, De Gruyter, Berlin, 2023.
		
		
		\bibitem{So} T. Soto, {\it Besov spaces via hyperbolic fillings}, arXiv:1606.08082.
		
		
		
		
		
		\bibitem{TrV} D.~Trotsenko and J.~V\"{a}is\"{a}l\"{a}, {\it Upper sets and quasi-symmetric maps}, Ann. Acad. Sci. Fenn. Math. 24 (1999), 465--488.
		
		
 \bibitem{V} J.~V\"{a}is\"{a}l\"{a}, {\it Gromov hyperbolic spaces}, Expo. Math. 23 (2005), 187--231.
		
		
\bibitem{ZPR} Q.~Zhou, S.~Ponnusamy, and A.~Rasila,  {\it Busemann functions and uniformization of Gromov hyperbolic spaces},	 Math. Nachr. 298 (2025), 2152--2176.
	
\end{thebibliography}
\end{document}